\documentclass[
a4paper,
12pt,
]{article}

\usepackage{amsmath,amssymb,amsthm}
\usepackage{paralist,enumitem,bm,placeins,url}
\usepackage{color,graphicx,subfig}
\usepackage[margin=30mm,top=40mm]{geometry}
\usepackage[font=small]{caption}
\usepackage[numbers,sort&compress]{natbib}
\usepackage{diagbox}

\usepackage{algorithm}
\usepackage{algpseudocode}

\usepackage{array}
\newcolumntype{C}[1]{>{\centering\arraybackslash}p{#1}}
\usepackage{tocbasic}
\DeclareTOCStyleEntry[
beforeskip=.2em plus 1pt,
]{tocline}{section}
\usepackage{tikz}

\usepackage{hyperref}
\makeatletter
\hypersetup{%
colorlinks=true,
linkcolor=blue,%
citecolor=red,%
urlcolor=blue
}
\makeatother

\usepackage[normalem]{ulem}
\usepackage{mathrsfs}
\usepackage[title]{appendix}

\newtheorem{thm}{Theorem}[section]
\newtheorem{lem}[thm]{Lemma}
\newtheorem{rem}[thm]{Remark}

\newcommand*{\vv}[1]{\vec{\mkern0mu#1}}

\newcommand{\norm}[1]{\Vert#1\Vert}

\newcommand{\bR}{{\mathbb R}}
\newcommand{\bN}{{\mathbb N}}

\newcommand{\bU}{\mathbb{U}}
\newcommand{\bP}{\mathbb{P}}
\newcommand{\tD}{\mathbb{D}}
\newcommand{\mX}{\mathscr{X}}

\newcommand{\bkap}{{\bar{\varkappa}}}
\newcommand{\Gt}{{\Gamma(t)}}
\newcommand{\Gm}{{\Gamma^m}}
\newcommand{\Dtw}{\partial_t^{\vec w}}
\newcommand{\Dtv}{\partial_t^{\vec v}}
\newcommand{\vol}{\operatorname{vol}}
\def\widebar{\overline}

\newcommand{\dL}{{\rm d}\mathscr{L}}
\newcommand{\dH}{{\rm d}\mathscr{H}}
\newcommand{\rd}{\;{\rm d}}
\newcommand{\id}{{\rm id}}
\newcommand{\dd}[1]{\frac{\rm d}{{\rm d}#1}}
\newcommand{\ddt}{\dd{t}}

\newcommand{\nn}{\nonumber}
\newcommand{\ttau}{\Delta t}
\newcommand{\ipd}[1]{\big\langle#1\big\rangle}
\newcommand{\ipD}[1]{\left(#1\right)}
\newcommand{\nabs}{\nabla_{\!\!s}}

\newcommand{\Nbulk}{\mat{\vec{N}}_{\Gamma,\Omega}}

\newcommand{\mat}[1]{\uuline{#1}\rule{0pt}{0pt}}
\newcommand{\Id}{{I\!d}}

\numberwithin{equation}{section}

{{\upshape\bfseries AMS subject classifications. }\ignorespaces}{}

\begin{document}

\renewcommand{\thefootnote}{\fnsymbol{footnote}}

\title{A unified energy-stable finite element approximation for evolving fluidic biomembranes}

\author{Harald Garcke\thanks{Fakult{\"a}t f{\"u}r Mathematik, Universit{\"a}t Regensburg,
93040 Regensburg, Germany ({\color{blue}harald.garcke@ur.de})}
\and Robert N\"urnberg\thanks{Dipartimento di Matematica, Universit\`a di Trento,
38123 Trento, Italy ({\color{blue}robert.nurnberg@unitn.it})}
\and Quan Zhao\thanks{School of Mathematical Sciences, University of Science and Technology of China, 230026 Hefei, Anhui, China ({\color{blue}quanzhao@ustc.edu.cn})}
}

\date{}
\maketitle

\begin{abstract}
\noindent We present a unified finite element method for the dynamics of fluidic biomembranes. The model is governed by the Navier--Stokes equations in the bulk coupled to the surface Navier--Stokes equations on the evolving biomembrane surface, with bending forces arising from the Willmore energy. By allowing the bulk mesh velocity to be independent of the fluid velocity and permitting a free tangential surface velocity, we are able to derive a unified weak formulation of the coupled bulk-surface Navier--Stokes system. To address the bending force, we consider an evolution equation for the curvature and propose a surface arbitrary Lagrangian--Eulerian (ALE) weak formulation. Discretization with either fitted or unfitted finite elements leads to
well-posed fully discrete linear schemes that are unconditionally energy 
stable. We present a variety of numerical examples to demonstrate the favourable properties of the proposed methods.

\end{abstract}

\noindent \textbf{Key words.} Fluidic biomembrane, two-phase flow, bulk-surface Navier--Stokes, energy stability, surface incompressibility, volume preservation. \\


\setlength\parskip{1ex}
\renewcommand{\thefootnote}{\arabic{footnote}}

\setcounter{equation}{0}

\setlength\parindent{24pt}

\section{Introduction}\label{sec:intro}

Fluidic biomembranes play an essential role in many biological processes, including vesicle transport, cell motility, and membrane-mediated interactions between proteins. These membranes are typically composed of lipid bilayers that exhibit fluid-like behaviour along the membrane surface while interacting dynamically with the surrounding viscous fluid. At the continuum level, the membrane can therefore be modelled as an incompressible viscous surface endowed with bending elastic energies \cite{Seifert97}. The bending elasticity in its simplest version is commonly described by the Willmore energy \cite{Willmore93},
\begin{equation}
E_b = \frac{1}{2}\int_{\Gamma}\varkappa^2\dH^{d-1},
\end{equation}
where $\Gamma$ is a hypersurface in $\bR^d$, $d\in\{2,3\}$, $\varkappa$ denotes its mean curvature, and $\dH^{d-1}$ represents integration with respect to the $(d-1)$-dimensional Hausdorff measure in $\bR^d$. This gives rise to nonlinear geometric forces acting on the membrane (see \eqref{eq:fGamma}, below). As a consequence, the resulting model leads to a coupled bulk-surface system consisting of the incompressible Navier--Stokes equations in the bulk and the surface incompressible Navier--Stokes equations on the evolving membrane, together with highly nonlinear fourth-order geometric forces \cite{Arroyo09relaxation, BGN16sfluidic}. The presence of these high-order geometric forces, together with the strong coupling between membrane motion and bulk fluid dynamics, poses significant challenges for the numerical approximation of evolving fluidic biomembranes.

Since the geometric forces are derived from the Willmore energy, their discretization is closely related to the numerical approximation of the Willmore flow and its volume- and area-preserving variant, the Helfrich flow. These include parametric finite element methods (FEM) \cite{Rusu05, Dziuk08, BGN08willmore, Bonito10parametric, pwfade, KovacsLL21, Hu22evolving, Kemmochi25structure, RumpfSS25preprint, GNZ25willmore, GNZ26willmore}, phase-field approximations \cite{Du04phase, FrankenRW13}, the thresholding method \cite{Esedoglu08threshold}, and the level set approach \cite{Droske04level}. For the numerical approximation of the dynamics of lipid membranes and vesicles in fluidic environments, there exists a large body of work (see \cite{Arroyo09relaxation, Franke11numerical, Bonito11dynamics, Salac12reynolds, Kruger13crossover, Laadhari14computing, shi2014three, Hu14immersed, Aland14diffuse, Rodrigues15semi, BGN16sfluidic, BGN17fluidic, Krause23jcp, Garcke26PFEM} and the references therein). Despite these contributions, to the best of our knowledge, numerical analysis for fluidic biomembrane models remains very limited, particularly with respect to stability estimates, due to the complexity of the model. We also refer the reader to \cite{BGN16sfluidic, BGN17fluidic} for an unfitted FEM for a dynamic model of fluidic biomembranes, where a stability estimate for the semi-discretization is available. These stable approximations of the geometric forces at the semi-discrete level build on the pioneering work of Dziuk for the Willmore flow \cite{Dziuk08}. In this work, we focus on the sharp-interface model of fluidic biomembrane dynamics that was considered in \cite{BGN16sfluidic} and propose a finite element approximation in a unified framework. In particular, we close an existing gap in the literature by introducing, to the best of our knowledge, the first fully discrete finite element scheme that preserves the intrinsic energy structure of the system and admits a discrete energy stability estimate. 

\textit{The first motivation} for the present work stems from our recent numerical study of the Willmore flow in \cite{GNZ25willmore, GNZ26willmore}, where a fully discrete energy-stable approximation was presented. The introduced scheme splits the normal and tangential components of the velocity by approximating the gradient flow structure of Willmore flow through an evolution equation for the mean curvature, in which a convective term involving the tangential velocity arises naturally (see \eqref{eq:tvark}, below). The mean curvature evolution equation can be treated in a novel way using a surface-based arbitrary Lagrangian--Eulerian (ALE) technique, which gives a fully discrete, linear and unconditionally energy-stable approximation that allows for an independent choice of the tangential velocity. This is analogous to the classical ALE approach for moving bulk domains. For example, we recall that for two-phase flow the present authors have recently been able to devise an energy-stable approximation that is independent of the bulk mesh velocity, see \cite{GNZ23, GNZ23asy}.

\textit{The second motivation} is that the surface ALE technique in \cite{GNZ26willmore} can also be employed to handle the surface Navier--Stokes equations on the evolving membrane, leading to a fully discrete FEM approximation with a stability estimate for the kinetic energy of the surface fluid. This provides a natural generalization of the ideas developed for Willmore-type flows in \cite{GNZ26willmore} to the case of surface Navier--Stokes equations on evolving manifolds. 

\textit{The third motivation} concerns the treatment of the coupled bulk--surface Navier--Stokes system, where we propose a unified weak formulation that allows the bulk mesh velocity to be completely independent of the fluid velocity. This accommodates the unfitted approach by selecting a zero bulk mesh velocity \cite{BGN15stable} or alternatively the fitted approach, which allows for some flexibility in choosing the bulk mesh velocity within the interior domain \cite{Duan22energy, GNZ23, Hu26ALE}.

In this setting, we unify the treatment of the geometric forces and the coupled bulk-surface Navier--Stokes equations, which enables novel fitted and unfitted FEM approximations that respect the intrinsic energy structure of fluidic biomembrane dynamics.

The remainder of this paper is organized as follows. In Section~\ref{sec:model}, we introduce the sharp-interface model for the dynamics of fluidic biomembranes, formulated as a coupled bulk-surface system. Section~\ref{sec:weakform} presents a unified weak formulation of the model, including the bulk-surface Navier--Stokes equations, a parametric representation of the geometric forces, and the evolution of the interface with a tangential velocity. In Section~\ref{sec:FEM}, we develop an ALE fitted mesh approach and present a linear finite element approximation of the weak formulation, together with proofs of solvability and an energy stability estimate. Numerical results based on the ALE approach are reported in Section~\ref{sec:num}. We then turn to an unfitted finite element approximation in Section~\ref{sec:unf}, where we present the fully discrete scheme, state well-posedness and unconditional stability and present some numerical results. Finally, conclusions are drawn in Section~\ref{sec:con}.

\section{The sharp-interface model}
\label{sec:model}

We follow the notations in \cite{BGN16sfluidic} and consider the dynamics of two fluids in the
domain $\Omega\subset\bR^d$ with
$\Omega = \Omega_+(t)\cup\Omega_-(t)\cup\Gamma(t)$, where  $d\in\{2,3\}$,
see Fig.~\ref{fig:model}.
Here $\Omega_\pm(t)$ are the regions occupied by the two fluids with densities $\rho_\pm\geq 0$ and viscosities $\mu_\pm>0$, respectively. The two fluids are separated by the interface
$\Gamma(t)$. Let $\vec u:\Omega\times[0,T]\to\bR^d$
be the fluid velocity and $p:\Omega\times[0, T]\to\bR$ be the pressure. The dynamic system is then governed by the standard incompressible Navier--Stokes equations %
\begin{subequations}\label{eqn:nv}
\begin{alignat}{2}
\label{eq:nv}
\rho_\pm\partial_t^\bullet\vec u - \nabla\cdot\mat{\sigma} &= 0\qquad &&\mbox{in}\quad\Omega_\pm(t),\\
\label{eq:div}
\nabla\cdot\vec u &= 0\qquad &&\mbox{in}\quad\Omega_\pm(t),\\
\label{eq:BD1}
\vec u &= \vec g\quad &&\mbox{on}\quad\partial_{_1}\Omega,\\
\mat{\sigma}\,\vec n_{_{\partial\Omega}} &=\vec 0\quad &&\mbox{on}\quad\partial_{_2}\Omega,
\label{eq:BD2}
\end{alignat}
\end{subequations}
where $\partial_t^\bullet$ is the material time derivative such that
\begin{equation}\label{eq:materT}
\partial_t^\bullet\vec\varphi  = \partial_t\vec\varphi + (\vec u\cdot\nabla)\vec\varphi,\qquad\mbox{for any}\quad \vec\varphi: \Omega\times[0,T]\to\bR^d.
\end{equation}
In addition, we have the stress tensor
\[\mat{\sigma}=2\mu_\pm\mat{D}(\vec u)-p\,\mat{\Id}=\mu_\pm[\nabla\vec u + (\nabla\vec u)^T]-p\mat{\Id}\quad\mbox{in}\quad\Omega_\pm(t),\]
where $\mat{D}(\vec u)=\frac{1}{2}\left(\nabla\vec u + (\nabla\vec u)^T\right)$ is the rate of deformation tensor, with $\nabla\vec u=(\partial_{x_j}u_i)_{i,j=1}^d$ and $\vec u = (u_1,\ldots, u_d)^T$, and $\mat{\Id}\in\bR^{d\times d}$ denotes the identity matrix. Furthermore, we denote by $\partial\Omega=\partial_1\Omega\cup\partial_2\Omega$ a disjoint partitioning of the boundary of $\Omega$ and assume that $\mathscr{H}^{d-1}(\partial_1\Omega)>0$. We also assume that $\partial\Omega\cap\partial\Omega_-(t)=\emptyset$ so that $\Gamma(t)$ is a closed hypersurface and does not intersect $\partial\Omega$. Here, we prescribe an inhomogeneous boundary condition on $\partial_1\Omega$ and a stress-free condition on $\partial_2\Omega$ with $\vec n_{_{\partial\Omega}}$ standing for the outwards unit normal to $\partial\Omega$. In the case where $\partial_2\Omega=\emptyset$, i.e., $\partial_1\Omega=\partial\Omega$, we require the compatibility condition
\begin{equation}
\int_{\partial\Omega}\vec g\cdot\vec n_{\partial\Omega}\dH^{d-1}=0,
\end{equation}
which is necessary for the solvability of the divergence equation.

\begin{figure}
\centering
\begin{tikzpicture}[scale=1.2]
\definecolor{innerblue}{RGB}{180,210,235}
\definecolor{outerorange}{RGB}{245,210,170}

\fill[outerorange] (-3,-2) rectangle (3,2);

\fill[innerblue] (0,0) circle (1);
\draw[thick] (0,0) circle (1);
\draw[thick] (-3,-2) rectangle (3,2);
\node at (0,0) {$\Omega_{-}(t)$};
\node at (2,-1.3) {$\Omega_{+}(t)$};
\node at (1.4,0.3) {$\Gamma(t)$};
\draw[->,thick,blue] (0.7,0.7) -- (1.2,1.2);
\node at (1.00,1.3) {$\vec{\nu}$};
\end{tikzpicture}
\caption{A schematic illustration of the two-phase flow in $\Omega_\pm(t)$ with a biomembrane $\Gamma(t)$. }
\label{fig:model}
\end{figure}

We assume that a parameterization of the evolving interface $\Gamma(t)$ is given by
\begin{equation}
\vec x(\cdot, t): \Upsilon\times[0,T]\to\bR^d,
\end{equation}
where $\Upsilon$ is a suitable reference surface without boundary in $\bR^d$. The surface velocity induced by this parameterization is thus defined as
\begin{equation}
\label{eq:para}
\vec v(\vec x(\vec q,t), t) = \partial_t\vec x(\vec q,t)\qquad\forall\vec q\in\Upsilon. 
\end{equation}
The mean curvature of the surface $\Gamma(t)$, denoted by $\varkappa$, can be defined via the following identity:
\begin{equation}
\varkappa\,\vec\nu  = \Delta_s\vec\id\qquad\mbox{on}\quad\Gamma(t),
\end{equation}
where $\vec\id$ is the identity function, $\vec\nu$ is the unit normal to $\Gamma(t)$, and $\Delta_s=\nabs\cdot\nabs$ is the Laplace--Beltrami operator on the surface with $\nabs$ being the surface gradient.

On the free surface $\Gamma(t)$, the following conditions need to hold
\begin{subequations}\label{eqn:ifc}
\begin{alignat}{2}
\label{eq:if1}
[\vec u]^+_-&=\vec 0\qquad &&\mbox{on}\quad\Gamma(t), \\
\label{eq:if2}
\rho_\Gamma\partial_t^\bullet\vec u- \nabs\cdot\mat{\sigma_\Gamma} &=[\mat{\sigma}\,\vec\nu]_-^+  +\alpha f_\Gamma\,\vec\nu\quad &&\mbox{on}\quad\Gamma(t), \\
\label{eq:if3}
\nabs\cdot\vec u &= 0\qquad &&\mbox{on}\quad\Gamma(t), \\
\label{eq:if4}
\vec v\cdot\vec\nu &= \vec u\cdot\vec\nu\quad &&\mbox{on}\quad\Gamma(t),
\end{alignat}
\end{subequations}
where $[\cdot]_-^+$ denotes the jump value across $\Gamma(t)$ from $\Omega_-(t)$ to $\Omega_+(t)$, $\rho_\Gamma\in\bR_{\geq 0}$ denotes the surface material density and $\partial_t^\bullet$ again denotes the material time derivative
\begin{equation}\label{eq:fmatT}
\partial_t^\bullet\vec\zeta = \partial_t\vec\zeta + (\vec u\cdot\nabla)\vec\zeta,\quad\mbox{for all}\quad\vec\zeta\in [H^1(\mathcal{G}_T)]^d,
\end{equation}
where we introduce the space-time surface
\[\mathcal{G}_T:=\cup_{t\in[0,T]}\Gamma(t)\times\{t\}.\]
The material time derivative in \eqref{eq:fmatT} is well defined and in fact depends only on the values of $\vec\zeta$ on $\mathcal{G}_T$.
The surface stress tensor $\mat{\sigma_\Gamma}$ is given by
\begin{equation}
\mat{\sigma_\Gamma} = 2\mu_\Gamma \mat{D}_s(\vec u) - p_{_\Gamma}\mat{P}_\Gamma\qquad\mbox{on}\quad\Gamma(t),
\end{equation}
where $\mat{P}_\Gamma$ is the projection operator onto the tangential space of $\Gamma(t)$,
\begin{equation}
\mat{P}_\Gamma = \mat{\Id} - \vec\nu\otimes\vec\nu \qquad\mbox{on}\quad\Gamma(t),
\end{equation}
and the surface deformation tensor is
\begin{equation}
 \mat{D}_s(\vec u) = \frac{1}{2}\mat{P}_\Gamma(\nabs\vec u + (\nabs\vec u)^T)\mat{P}_\Gamma\qquad\mbox{on}\qquad\Gamma(t).
 \end{equation}

We call \eqref{eq:if2} and \eqref{eq:if3} the surface incompressible Navier--Stokes equations.
On the right-hand side of \eqref{eq:if2}, $\alpha\in\bR_{>0}$ is the bending rigidity, and $f_\Gamma$ is the curvature force given by
\begin{equation}
f_\Gamma = -\Delta_s\varkappa -(\varkappa-\bkap) \,|\nabs\vec\nu|^2+\frac{1}{2}(\varkappa-\bkap)^2\varkappa,\qquad\mbox{on}\quad\Gt,\label{eq:fGamma}
\end{equation}
where $\nabs\vec\nu$ is the Weingarten map and $|\mat{A}|^2={\rm tr}(\mat{A}\mat{A}^T)$ is the Frobenius norm for any matrix $\mat{A}\in\bR^{d\times d}$. Note that in the case of $d=2$, $|\nabs\vec\nu|^2 = \varkappa^2$. Here, $f_\Gamma$ arises from the well-known Willmore energy
\begin{equation}\label{eq:Willmore}
E_{\rm \bkap}(\vec x(t), \varkappa(t)) = \frac{1}{2}\int_{\Gamma(t)}(\varkappa-\bkap)^2\,\rd\mathscr{H}^{d-1},
\end{equation}
where $\bkap\in\bR$ is the spontaneous curvature.  The value of $\bkap$ takes into account that the surface may have a preferred mean curvature, which plays a crucial role in biomembranes. In particular, the first variation of \eqref{eq:Willmore} for a closed hypersurface $\Gamma(t)$ leads to (see \cite[(222)]{Barrett20})
\begin{equation}
\ddt E_{\bkap}(\vec x(t), \varkappa(t)) = -\int_{\Gamma(t)}f_\Gamma\,\vec\nu\cdot\vec v\dH^{d-1}.
\end{equation}

We further introduce the density and viscosity functions as
\begin{equation}\label{eq:denvisf}
\rho(\cdot, t) = \rho_-\mX_{\Omega_-(t)} + \rho_+\mX_{\Omega_+(t)},\qquad \mu(\cdot, t) = \mu_-\mX_{\Omega_-(t)} + \mu_+\mX_{\Omega_+(t)},
\end{equation}
where $\mX_E$ is the characteristic function of a set $E$. To form a complete system, we also include the initial conditions
\begin{equation}
\Gamma(0)=\Gamma_0,\qquad \rho\,\vec u(\cdot,0) = \rho\,\vec u_0\quad\mbox{in}\quad\Omega,\qquad \rho_\Gamma\,\vec u(\cdot,0) = \rho_\Gamma\,\vec u_0\quad\mbox{on}\quad\Gamma_0.
\end{equation}
The complete sharp-interface model for the evolving fluidic biomembrane consists of the bulk Navier--Stokes equations with boundary conditions in \eqref{eqn:nv}, alongside the surface Navier--Stokes equations and interface conditions in \eqref{eqn:ifc}. The total free energy of the system is the sum of the bulk and surface fluid kinetic energies as well as the bending energy
\begin{equation} \label{eq:Et}
E(t)= \frac{1}{2}\int_{\Omega}\rho\,|\vec u|^2\dL^d + \frac{\rho_\Gamma}{2}\int_{\Gamma(t)}|\vec u|^2\dH^{d-1} + \frac{\alpha}{2}\int_{\Gamma(t)}\,(\varkappa-\bkap)^2\dH^{d-1}. 
\end{equation}
We have the following lemma for the strong solution of the system. 

\begin{lem} \label{lem:dtE}
\begin{align}
&\ddt E(t) +2\int_{\Omega}\mu |\mat{D}(\vec u)|^2\dL^d + 2\int_{\Gamma(t)}\mu_\Gamma|\mat{D}_s(\vec u)|^2\dH^{d-1}\nn\\
&\qquad\quad +\frac{\rho_+}{2}\int_{\partial\Omega}(\vec u\cdot\vec n)|\vec u|^2\dH^{d-1}=  \int_{\partial_1\Omega}(\mat{\sigma}\,\vec n)\cdot\vec g\,\dH^{d-1}.
\end{align}
Furthermore, the system satisfies the volume preservation and surface area preservation
\begin{subequations}\label{eqn:strongcon}
\begin{align}
&\ddt\vol(\Gt)=\int_{\Gt}\vec v\cdot\vec\nu\dH^{d-1}=0,\\
&\ddt|\Gt|=\int_{\Gt}\nabs\cdot\vec u\,\dH^{d-1}=0.
\end{align}
\end{subequations}
\end{lem}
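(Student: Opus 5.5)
The plan is to test the bulk equation \eqref{eq:nv} with $\vec u$ on each of $\Omega_\pm(t)$, test the surface momentum equation \eqref{eq:if2} with $\vec u$ on $\Gt$, and add the results. The interface traction terms $[\mat\sigma\vec\nu]_-^+$ should cancel, and the bending term should combine with the first variation of the Willmore energy.

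\textbf{Step 1 (bulk).} We multiply \eqref{eq:nv} by $\vec u$ and integrate over $\Omega_\pm(t)$. For the inertial term, $\rho\,\partial_t^\bullet\vec u\cdot\vec u=\frac{\rho}{2}\partial_t^\bullet|\vec u|^2$. Since $\nabla\cdot\vec u=0$ and $\Gt$ moves with normal velocity $\vec u\cdot\vec\nu$ by \eqref{eq:if4}, the Reynolds transport theorem on each phase gives
\[
\int_{\Omega}\tfrac{\rho}{2}\partial_t^\bullet|\vec u|^2\dL^d=\ddt\tfrac12\int_\Omega\rho|\vec u|^2\dL^d+\tfrac{\rho_+}{2}\int_{\partial\Omega}(\vec u\cdot\vec n)|\vec u|^2\dH^{d-1}.
\]
The flux through $\partial\Omega$ is present because $\partial\Omega$ is fixed, and only $\rho_+$ appears since $\partial\Omega\subset\partial\Omega_+$. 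The contributions across $\Gt$ cancel between the two phases, using continuity \eqref{eq:if1}. For the stress term we integrate by parts. Because $\mat\sigma$ is symmetric and $\nabla\cdot\vec u=0$, we have $\mat\sigma:\nabla\vec u=2\mu|\mat D(\vec u)|^2$. The boundary terms are then $\int_{\partial_1\Omega}(\mat\sigma\vec n)\cdot\vec g$ (the $\partial_2\Omega$ part vanishes by \eqref{eq:BD2}) plus $-\int_\Gt[\mat\sigma\vec\nu]_-^+\cdot\vec u$, with $\vec\nu$ pointing into $\Omega_+$.

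\textbf{Step 2 (surface).} We test \eqref{eq:if2} with $\vec u$ on $\Gt$. Here the surface transport theorem for the material velocity $\vec u$ should give $\ddt\int_\Gt\frac{\rho_\Gamma}{2}|\vec u|^2=\int_\Gt\rho_\Gamma\partial_t^\bullet\vec u\cdot\vec u+\frac{\rho_\Gamma}{2}|\vec u|^2\nabs\cdot\vec u$, and the last term vanishes by \eqref{eq:if3}. Surface integration by parts on the closed surface gives $-\int_\Gt(\nabs\cdot\mat{\sigma_\Gamma})\cdot\vec u=\int_\Gt\mat{\sigma_\Gamma}:\nabs\vec u$. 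This is valid because $\mat{\sigma_\Gamma}$ is tangential ($\mat{\sigma_\Gamma}\vec\nu=0$), so no curvature term appears. Using $\mat{P}_\Gamma$-symmetry we get $\mat{\sigma_\Gamma}:\nabs\vec u=2\mu_\Gamma|\mat D_s(\vec u)|^2-p_\Gamma\nabs\cdot\vec u$, and the pressure part drops by \eqref{eq:if3}. On the right-hand side, the jump term cancels the one from Step 1. For the bending term we have $\alpha\int_\Gt f_\Gamma\vec\nu\cdot\vec u=\alpha\int_\Gt f_\Gamma\vec\nu\cdot\vec v=-\ddt\frac\alpha2\int_\Gt(\varkappa-\bkap)^2$, by \eqref{eq:if4} together with the stated first-variation formula. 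Note that the parametric velocity $\vec v$ only needs to agree with $\vec u$ in the normal component, because the energy is geometric. Summing Steps 1 and 2 yields the energy identity.

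\textbf{Step 3 (conservation).} We have $\ddt\vol(\Gt)=\int_\Gt\vec v\cdot\vec\nu=\int_\Gt\vec u\cdot\vec\nu=\int_{\Omega_-}\nabla\cdot\vec u=0$, by \eqref{eq:if4} and the divergence theorem. For the area, $\ddt|\Gt|=\int_\Gt\nabs\cdot\vec v$. Writing $\vec v=\vec u+\vec w$ with $\vec w$ tangential, we get $\int_\Gt\nabs\cdot\vec w=-\int_\Gt\varkappa\vec w\cdot\vec\nu=0$ by the tangential divergence theorem on a closed surface, and \eqref{eq:if3} then gives zero.

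\textbf{Main obstacle.} The delicate step is the transport bookkeeping in Steps 1 and 2: applying Reynolds' theorem on the moving phases and the surface transport theorem with the material derivative \eqref{eq:fmatT}. This step must justify that the interface flux terms cancel and that the material derivative of $|\vec u|^2$ along $\mathcal G_T$ is independent of the chosen extension. I would handle this by parametrizing with the flow map of $\vec u$.
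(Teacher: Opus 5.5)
Your proposal is correct and follows essentially the paper's argument: you test the bulk and surface momentum equations with $\vec u$, cancel the traction jumps $[\mat{\sigma}\,\vec\nu]_-^+$, and convert the bending term via \eqref{eq:if4} and the first variation of $E_{\bkap}$, and your conservation argument is the strong-form version of the one in the proof of Theorem~\ref{thm:weak}. One small correction to your Step~1 explanation: the $\Gt$-terms in the kinetic-energy transport cancel within each phase (the Reynolds boundary term against the convective flux, using \eqref{eq:if4} and $\nabla\cdot\vec u=0$), not between the two phases, which would fail for $\rho_+\neq\rho_-$; your displayed formula is nevertheless correct.
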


\begin{proof}
It follows from the Reynolds transport theorem that
\begin{subequations}
\begin{align}\label{eqn:transp}
\dfrac12\ddt \int_{\Omega_+(t)}\rho_+|\vec u|^2\,\dL^d= \int_{\Omega_+(t)}\rho_+\partial_t\vec u\cdot\vec u\,\dL^d- \frac{\rho_+}{2}\int_{\Gamma(t)}|\vec u|^2(\vec u\cdot\vec\nu)\dH^{d-1},\\
\dfrac12\ddt \int_{\Omega_-(t)}\rho_-|\vec u|^2\,\dL^d= \int_{\Omega_-(t)}\rho_-\partial_t\vec u\cdot\vec u\,\dL^d + \frac{\rho_-}{2}\int_{\Gamma(t)}|\vec u|^2(\vec u\cdot\vec\nu)\dH^{d-1}
\end{align}
\end{subequations}
Multiplying \eqref{eqn:nv} by $\vec u$, integrating over $\Omega_-(t)$ and
$\Omega_+(t)$, and using \eqref{eqn:transp},  \eqref{eq:div},
\eqref{eq:if1} and \eqref{eq:if4}, yields
\begin{align}
&\frac{1}{2}\ddt\int_\Omega \rho\,|\vec u|^2\dL^d
 +2\int_\Omega\mu|\mat{D}(\vec u)|^2\dL^d+\frac{\rho_+}{2}\int_{\partial\Omega}
 (\vec u\cdot\vec n_{_{\partial\Omega}})|\vec u|^2\dH^{d-1}\nn\\
& =
-\int_{\Gamma(t)}[\mat{\sigma}\,\vec\nu]_-^+\cdot\vec u\,\dH^{d-1}
 +\int_{\partial_1\Omega}(\mat{\sigma}\,\vec n_{_{\partial\Omega}})\cdot\vec g
 \,\dH^{d-1}.
\label{eq:bulkenergystrong}
\end{align}
where we used
\begin{subequations}\label{eqn:convecint}
\begin{align}
&\int_{\Omega_+(t)}\rho_+(\vec u\cdot\nabla)\vec u\cdot\vec u\dL^d = \int_{\Omega_+(t)}\rho_+\nabla\cdot\left(\dfrac12|\vec u|^2\vec u\right)\dL^d\nn\\
 &\qquad\qquad = \frac{\rho_+}{2}\int_{\partial\Omega}|\vec u|^2(\vec u\cdot\vec n_{\partial\Omega})\dH^{d-1} - \frac{\rho_+}{2}\int_{\Gt}|\vec u|^2(\vec u\cdot\vec\nu)\dH^{d-1}\\
&\int_{\Omega_-(t)}\rho_-(\vec u\cdot\nabla)\vec u\cdot\vec u\dL^d  = \frac{\rho_-}{2}\int_{\Gt}|\vec u|^2(\vec u\cdot\vec\nu)\dH^{d-1}.
\end{align}
\end{subequations}

Next, testing the surface momentum balance \eqref{eq:if2} with $\vec u$ and
integrating by parts on the closed surface $\Gamma(t)$ gives
\begin{align}
&\frac{\rho_\Gamma}{2}\ddt\int_{\Gamma(t)}|\vec u|^2\dH^{d-1}
 +2\int_{\Gamma(t)}\mu_\Gamma|\mat{D}_s(\vec u)|^2\dH^{d-1}\nn\\
&\qquad =
\int_{\Gamma(t)}[\mat{\sigma}\,\vec\nu]_-^+\cdot\vec u\,\dH^{d-1}
 +\alpha\int_{\Gamma(t)}f_\Gamma\,\vec\nu\cdot\vec u\,\dH^{d-1}.
\label{eq:surfaceenergystrong}
\end{align}
In this step, $\nabs\cdot\vec u=0$ eliminates the surface pressure
contribution, and the transport theorem on $\Gt$ together with \eqref{eq:if3} and
\eqref{eq:if4} gives the time derivative of the surface kinetic energy.
Moreover, by \eqref{eq:if4} and the first variation of the bending energy,
\begin{equation}\label{eq:bendingenergystrong}
\alpha\int_{\Gamma(t)}f_\Gamma\,\vec\nu\cdot\vec u\,\dH^{d-1}
=\alpha\int_{\Gamma(t)}f_\Gamma\,\vec\nu\cdot\vec v\,\dH^{d-1}
=-\alpha\,\ddt E_{\bkap}(\vec x(t),\varkappa(t)).
\end{equation}
Adding \eqref{eq:bulkenergystrong}, \eqref{eq:surfaceenergystrong} and \eqref{eq:bendingenergystrong} then gives the energy dissipation identity.

For the conservation laws \eqref{eqn:strongcon}, we refer the reader to \cite{BGN16sfluidic, BGN17fluidic} and also to the proof in Theorem~\ref{thm:weak} for the weak setting.
 
\end{proof}

\section{Unified weak formulation}\label{sec:weakform}

In this section, we propose a novel weak formulation for the whole system by decomposing it into three subsystems. The first subsystem corresponds to the coupled bulk-surface Navier--Stokes equations. The second subsystem addresses the curvature force $f_\Gamma$ defined in \eqref{eq:fGamma}, while the third subsystem is for the evolution of the interface with tangential velocity.

We will derive the weak formulations of the three subsystems in a more general framework by considering the case of a moving coordinate system. Let
\begin{equation}
\vec\Phi[t]: \Omega\to\Omega,\qquad \vec y\mapsto \vec\Phi[t](\vec y)= \vec x(\vec y,t)\quad\mbox{for all}\quad \vec y\in\Omega,\quad t\in[0,T],\label{eq:Phi}
\end{equation}
be an ALE map of the domain, where $\vec\Phi[t]\in [W^{1,\infty}(\Omega)]^d$ is invertible with $\vec\Phi[t]^{-1}\in [W^{1,\infty}(\Omega)]^d$. This induces the domain mesh velocity
\begin{equation*}
\vec w(\vec x, t):=\frac{\partial\vec x(\vec y,t)}{\partial t}\left|\right._{\vec y = \vec\Phi[t]^{-1}(\vec x)}\quad\mbox{for all}\quad \vec y\in\Omega,\quad t\in[0,T].
\end{equation*}
For any vector field $\vec\varphi: \Omega\times[0,T]\to\bR^d$, we also define the time derivative with respect to the ALE moving frame
\begin{equation}\label{eq:ALET}
\Dtw\vec\varphi = \partial_t\vec\varphi + (\vec w\cdot\nabla)\vec\varphi.
\end{equation}
By comparison with \eqref{eq:materT}, we obtain that
\begin{equation}
\partial_t^\bullet\vec\varphi = \Dtw\vec \varphi + ([\vec u-\vec w]\cdot\nabla)\vec\varphi.\label{eq:dTd}
\end{equation}
In the case of $\vec u=\vec w$, the operator $\Dtw$ coincides with the material time derivative $\partial_t^\bullet$.

Similarly, given the interface velocity $\vec v$, satisfying the constraint \eqref{eq:if4} for geometric compatibility, we introduce the interface operator
\begin{equation}\label{eq:fALET}
\Dtv\zeta = \partial_t\zeta + (\vec v\cdot\nabla)\zeta\qquad\forall\zeta\in H^1(\mathcal{G}_T).
\end{equation}
Now, comparing \eqref{eq:fALET} with \eqref{eq:fmatT}, and recalling \eqref{eq:if4}, we obtain
\begin{equation}\label{eq:fdTd}
\partial_t^\bullet\zeta = \Dtv\zeta + ([\vec u- \vec v]\cdot\nabs)\zeta\qquad\forall\zeta\in H^1(\mathcal{G}_T).
\end{equation}

We note that $\vec w$ and $\vec v$ represent the velocities of the moving bulk domain and the moving interface, respectively. Our unified framework makes it possible to consider two common choices of the bulk domain velocity $\vec w$, each leading to very different finite element discretization methods:
\begin{itemize}
\item [(i)] leading to a fitted mesh approach:
\begin{subequations}
\begin{alignat}{2}
&\vec w(\vec x, t) = \vec v(\vec x, t)\qquad &&\mbox{on}\quad\Gamma(t),\\
&\vec w(\vec x, t)\cdot\vec n_{_{\partial\Omega}}(\vec x) = 0\qquad &&\mbox{on}\quad\partial\Omega,\label{eq:wbd}
\end{alignat}
\end{subequations}
with arbitrary choices of the mesh velocity $\vec w(\vec x, t)$ for $\vec x\in\Omega_\pm(t)$.
\item [(ii)] leading to an unfitted mesh approach:
\[\vec w(\vec x, t) = \vec 0\quad \mbox{for}\quad\vec x\in\widebar{\Omega},\]
which implies that the map defined in \eqref{eq:Phi} is the identity map in $\widebar{\Omega}$, i.e., $\vec\Phi[t]=\vec\id|_{\widebar{\Omega}}$.
\end{itemize}

\subsection{Weak formulation for bulk-surface NS equations}\label{sec:wfnv}

To introduce the weak formulation, we define the following (affine) function spaces:
\begin{subequations}\label{eqn:UPspaces}
\begin{align}
\mathbb{U}(\vec b)&:=\bigl\{\vec\varphi\in [H^1(\Omega)]^d:\;\vec\varphi = \vec b\;\;\mbox{on}\;\;\partial_1\Omega\bigr\},\\
\mathbb{V}(\vec b)&:=H^1(0,T;[L^2(\Omega)]^d)\cap L^2(0,T;\mathbb{U}(\vec b)),\\
\mathbb{V}_\Gamma(\vec b) &:= \bigl\{\vec\varphi\in\mathbb{V}(\vec b):\; \vec\varphi|_{\mathcal{G}_T}\in [H^1(\mathcal{G}_T)]^d\bigr\},
\end{align}
\end{subequations}
where we denote by $(\cdot,\cdot)$ the $L^2$--inner product over $\Omega$.
Moreover, we define
\begin{equation}
\mathbb{P}:= \left\{\begin{array}{ll}
L^2(\Omega)\qquad &\mbox{if}\quad \mathscr{H}^{d-1}(\partial_2\Omega)>0,\\
\bigl\{\eta\in L^2(\Omega):\;(\eta,~1) = 0\bigr\}\quad &\mbox{otherwise}.
\end{array}\right.
\end{equation}

We first consider the material time derivative of the bulk fluid velocity in \eqref{eq:nv}. To this end, we multiply it with a test function $\vec\chi\in\mathbb{V}(\vec 0)$ and integrate over $\Omega$. We let $\bigl(\cdot,\cdot)_{\Omega_\pm}$, $\ipd{\cdot,\cdot}_{\Gt}$ and $\ipd{\cdot,\cdot}_{\partial_2\Omega}$ be the $L^2$--inner product over $\Omega_\pm(t)$, $\Gt$ and $\partial_2\Omega$, respectively. Then, on recalling \eqref{eq:dTd}, we have that
\begin{align}
\bigl(\rho\,\partial_t^\bullet\vec u,~\vec\chi\bigr) &= \bigl(\rho\,\Dtw\vec u,~\vec\chi\bigr)+\bigl(\rho\,([\vec u-\vec w]\cdot\nabla)\vec u,~\vec\chi\bigr)\nn\\
&=\frac{1}{2}\bigl(\rho\,\Dtw\vec u,~\vec\chi\bigr) + \frac{1}{2}\bigl(\rho\,\Dtw\vec u,~\vec\chi\bigr)+\bigl(\rho\,([\vec u-\vec w]\cdot\nabla)\vec u,~\vec\chi\bigr)\nn\\
&=\frac{1}{2}\left[\ddt\bigl(\rho\,\vec u,~\vec\chi\bigr) + \bigl(\rho\,\Dtw\vec u,~\vec\chi\bigr) - \bigl(\rho\,\vec u,~\Dtw\vec\chi\bigr) +\rho_+\ipd{\vec u\cdot\vec n_{_{\partial\Omega}},~\vec u\cdot\vec\chi}_{\partial_2\Omega}\right]\nn\\
&\qquad + \mathscr{A}(\rho\,[\vec u-\vec w];\vec u,\vec\chi),
\label{eq:ale2}
\end{align}
where the third equality is due to \eqref{eq:ale1} in the appendix, and we also introduced the skew-symmetric term
\begin{equation*}
\mathscr{A}(\vec\zeta;\vec u,\vec\chi) = \frac{1}{2}\bigl((\vec\zeta\cdot\nabla)\vec u,~\vec\chi\bigr)-\frac{1}{2}\bigl((\vec\zeta\cdot\nabla)\vec\chi,~\vec u\bigr).
\end{equation*}

We next consider the material time derivative of the surface fluid velocity on the interface in \eqref{eq:if2}. 
We recall \eqref{eq:fdTd} to obtain
\begin{align}
&\ipd{\partial_t^\bullet \vec u, ~\vec\chi}_{\Gamma(t)}  = \ipd{\Dtv\vec u, ~\vec\chi}_{\Gt} + \ipd{([\vec u - \vec v]\cdot\nabs)\vec u,~\vec\chi}_{\Gt}\nn\\
&=\frac{1}{2}\ipd{\Dtv\vec u,~\vec\chi}_{\Gt}+\frac{1}{2}\ipd{\Dtv\vec u,~\vec\chi}_{\Gt} + \ipd{([\vec u - \vec v]\cdot\nabs)\vec u,~\vec\chi}_{\Gt},\nn\\
&=\frac{1}{2}\left[\ddt\ipd{\vec u,~\vec\chi}_{\Gt}+\ipd{\Dtv\vec u,~\vec\chi}_{\Gt}-\ipd{\vec u,~\Dtv\vec\chi}_{\Gt}\right]+\mathscr{A}_\Gt(\vec u-\vec v;\vec u,~\vec\chi),
\label{eq:fale2}
\end{align}
where we used \eqref{eq:fale1} from the appendix in the last equality and defined the skew-symmetric term
\begin{equation}
\mathscr{A}_{\Gt}(\vec\zeta;~\vec u, \vec\chi) = \frac{1}{2}\left[\ipd{(\vec\zeta\cdot\nabs)\vec u, ~\vec\chi}_{\Gt}-\ipd{(\vec\zeta\cdot\nabs)\vec\chi, ~\vec u}_{\Gt}\right].\label{eq:ifskew}
\end{equation}

Now, we combine \eqref{eq:ale2} and \eqref{eq:fale2} and propose the weak formulation for the bulk-surface Navier--Stokes equations as follows. For each $t\in(0,T]$, we find $\vec u\in\mathbb{V}_\Gamma(\vec g)$, $p\in\mathbb{P}$ and $p_\Gamma\in L^2(\Gt)$ such that
\begin{subequations}\label{eqn:nvweak}
\begin{align}
\label{eq:nvweak1}
&\frac{1}{2}\left[\ddt\bigl(\rho\,\vec u,~\vec\chi\bigr)+ \bigl(\rho\,\Dtw\vec u,~\vec\chi\bigr) - \bigl(\rho\,\vec u,~\Dtw\vec\chi\bigr)+\rho_+\ipd{\vec u\cdot\vec n_{_{\partial\Omega}},~\vec u\cdot\vec\chi}_{\partial_2\Omega}\right]\nn\\
&\qquad\quad + 2\bigl(\mu\mat{D}(\vec u),~\mat{D}(\vec\chi)\bigr)-\bigl(p,~\nabla\cdot\vec\chi\bigr) + \mathscr{A}\bigl(\rho[\vec u-\vec w];~\vec u,\vec\chi\bigr)\nn\\
&\qquad\quad +\frac{\rho_\Gamma}{2}\left[\ddt\ipd{\vec u,~\vec\chi}_{\Gt}+\ipd{\Dtv\vec u,~\vec\chi}_{\Gt}-\ipd{\vec u,~\Dtv\vec\chi}_{\Gt}\right]\nn\\
&\qquad\quad +2\mu_\Gamma\ipd{\mat{D}_s(\vec u),~\mat{D}_s(\vec\chi)}_{\Gt}-\ipd{p_\Gamma,~\nabs\cdot\vec\chi}_{\Gt}\nn\\
&\qquad\quad +\mathscr{A}_{\Gt}(\rho_\Gamma[\vec u-\vec v];~\vec u, \vec\chi)=  \alpha\ipd{f_\Gamma\,\vec\nu,~\vec\chi}_{\Gt}\qquad\forall\vec\chi\in \mathbb{V}_\Gamma(\vec 0),\\[0.6em]
\label{eq:nvweak2}
&\quad\bigl(\nabla\cdot\vec u,~q\bigr)=0\qquad\forall q\in\mathbb{P},\\[0.6em]
&\quad\ipd{\nabs\cdot\vec u,~q_\Gamma}_{\Gt}=0\qquad\forall q_\Gamma\in L^2(\Gt),
\label{eq:nvweak3}
\end{align}
\end{subequations}
where the interface velocity $\vec v$ and the curvature force $f_\Gamma$ will be computed in Section~\ref{sec:wffG}.

\subsection{Weak formulation of the curvature force}\label{sec:wffG}

The main idea to allow for an energy-stable discretization is based on the work in \cite{GNZ26willmore}. We note that
\begin{equation*}
\partial_t^\square \varkappa = \Delta_s\mathscr{V}+\mathscr{V}|\nabs\vec\nu|^2,\qquad\mbox{on}\quad\Gamma(t),
\end{equation*}
where $\mathscr{V}=\vec v\cdot\vec\nu$ is the velocity in the normal direction and $\partial_t^\square$ is the normal time derivative, which measures the change in the moving surface in the normal direction. Then, we incorporate the tangential velocity to obtain
\cite[Lemma 39]{Barrett20}
\begin{equation}
\label{eq:tvark}
\Dtv\varkappa = \Delta_s\mathscr{V} + \mathscr{V}|\nabs\vec\nu|^2+\vec v\cdot\nabs\varkappa\qquad\mbox{on}\quad\Gamma(t).
\end{equation}

Now, we derive a weak formulation for the curvature force $f_\Gamma$ in \eqref{eq:fGamma} using the curvature evolution equation \eqref{eq:tvark}. It follows from the transport theorem and the Gauss theorem on manifolds (see \cite[Theorems 21,32]{Barrett20}) that
\begin{align}
\ddt\ipd{\varkappa-\bkap,~\chi}_{\Gt}&=\ipd{\Dtv\varkappa,~\chi}_{\Gt}+ \ipd{\varkappa-\bkap,~\Dtv\chi}_{\Gt}+\ipd{(\varkappa-\bkap)\,\chi,~\nabs\cdot\vec v}_{\Gt}\nn\\
&=\ipd{\Dtv\varkappa,~\chi}_{\Gt}+ \ipd{\varkappa-\bkap,~\Dtv\chi}_{\Gt}\nn\\
&\quad -\ipd{[\varkappa-\bkap]\,\varkappa\,\chi,~\mathscr{V}}_{\Gt}- \ipd{\vec v,~\nabs[(\varkappa-\bkap)\,\chi]}_{\Gt}.\label{eq:ffale1}
\end{align}
Furthermore, we have
\begin{align}
&\ipd{\Dtv\varkappa - \vec v\cdot\nabs\varkappa,~\chi}_{\Gt}
=\frac{1}{2}\ipd{\Dtv\varkappa,~\chi}_{\Gt}+\frac{1}{2}\ipd{\Dtv\varkappa,~\chi}_{\Gt}- \ipd{(\vec v\cdot\nabs)[\varkappa-\bkap],~\chi}_{\Gt}\nn\\
&=\frac{1}{2}\left[\ddt\ipd{\varkappa-\bkap,~\chi}_{\Gt}+ \ipd{\Dtv\varkappa,~\chi}_{\Gt} - \ipd{\varkappa-\bkap,~\Dtv\chi}_{\Gt}\right]
\nn\\
&\qquad +\frac{1}{2}\ipd{[\varkappa-\bkap]\,\varkappa\,\chi,~\mathscr{V}}_{\Gt}-\mathscr{A}_{\Gamma(t)}(\vec v; \varkappa-\bkap, \chi),\label{eq:ffale2}
\end{align}
where we have used \eqref{eq:ffale1} in the last equality, and where $\mathscr{A}_{\Gt}$ is the skew-symmetric term 
\begin{equation*}
\mathscr{A}_{\Gt}(\vec\zeta;~\varkappa, \chi) = \frac{1}{2}\left[\ipd{(\vec\zeta\cdot\nabs)\varkappa, ~\chi}_{\Gt}-\ipd{(\vec\zeta\cdot\nabs)\chi, ~\varkappa}_{\Gt}\right],
\end{equation*}
defined in a similar manner to \eqref{eq:ifskew}.

On combining \eqref{eq:fGamma} and \eqref{eq:ffale1}, we now propose the following weak formulation for the computation of the curvature force $f_\Gamma$. For each $t\geq 0$, we find $f_\Gamma\in L^2(\Gt)$, $\varkappa\in H^1(\mathcal{G}_T)$ and $\mathscr{V}\in H^1(\Gt)$ such that
\begin{subequations}\label{eqn:ifweak}
\begin{align}\label{eq:ifweak1}
&\ipd{f_\Gamma,~\varphi}_{\Gt}-\ipd{\nabs\varkappa,~\nabs\varphi}_{\Gt} + \ipd{[\varkappa-\bkap]|\nabs\vec\nu|^2,~\varphi}_{\Gt}\nn\\
&\qquad\quad-\frac{1}{2}\ipd{[\varkappa-\bkap]^2\varkappa,~\varphi}_{\Gt}=0\qquad\forall\varphi\in H^1(\Gt),\\[0.5em]
\label{eq:ifweak2}
&\frac{1}{2}\left[\ddt\ipd{\varkappa-\bkap,~\chi}_{\Gt}+ \ipd{\Dtv\varkappa,~\chi}_{\Gt} - \ipd{\varkappa-\bkap,~\Dtv\chi}_{\Gt}\right]\nn\\
&\qquad\quad -\mathscr{A}_{\Gt}(\vec v;~\varkappa-\bkap,~\chi) +\ipd{\nabs\mathscr{V},~\nabs\chi}_{\Gt} -\ipd{\mathscr{V}|\nabs\vec\nu|^2,~\chi}_{\Gamma(t)}\nn\\
&\qquad\quad+\frac{1}{2}\ipd{\mathscr{V},~[\varkappa-\bkap]\,\varkappa\,\chi}_{\Gt}=0\qquad\forall\chi\in H^1(\mathcal{G}_T),\\[0.7em]
&\ipd{\mathscr{V},~\xi}_{\Gt} - \ipd{\vec u\cdot\vec\nu,~\xi}_{\Gt}=0\qquad\forall\xi\in L^2(\Gt),
\label{eq:ifweak3}
\end{align} 
\end{subequations}
where $\vec v$ is the interface velocity, and its computation will be presented in Section~\ref{sec:inftan}.

\subsection{Evolving interface with tangential velocity}\label{sec:inftan}

It remains to obtain a formulation for the interface velocity $\vec v$ that is needed to update the parameterization of $\Gt$ through \eqref{eq:para}. This can be done by supplementing the normal velocity $\mathscr{V}$ with a suitable tangential velocity. Indeed, similar to \cite{GNZ26willmore}, we are able to freely choose the tangential velocity. For example, we may choose a tangential velocity which under discretization leads to an improved surface mesh quality. Since the fluid velocity $\vec u$ is incompressible on the surface, directly updating the interface with this velocity already leads to a very good mesh. This gives the following weak formulation: for each $t\in(0,T]$, we find $\vec x\in H^1(\Upsilon)$ with $\vec v\in [H^1(\Gt)]^d$ and $\lambda_\Gamma\in L^2(\Gt)$ such that
\begin{subequations}\label{eq:Xweak}
\begin{align}\label{eq:Xweak1}
&\ipd{\vec\nu\cdot\vec v,~\phi}_{\Gt} = \ipd{\mathscr{V},~\phi}_{\Gt}  \qquad\forall\phi\in L^2(\Gt),\\
&\ipd{\mat{P}_\Gamma\vec v,~\vec\eta}_{\Gt} + \ipd{\lambda_\Gamma\,\vec\nu, ~\vec\eta}_{\Gt}=  \ipd{\mat{P}_\Gamma\vec u,~\vec\eta}_{\Gt}\qquad\forall\vec\eta\in [H^1(\Gt)]^d,\label{eq:Xweak2}
\end{align}
\end{subequations}
where for convenience we have introduced the trivial Lagrange multiplier $\lambda_\Gamma = 0$ that will facilitate the introduction of compact well-posed fully discrete schemes upon discretization later on.

Now, the weak formulation for the whole system is given as follows. Given the initial surface $\Gamma(0) = \Gamma_0$ and $\varkappa(\cdot,0) = \varkappa_0$ and fluid velocity $\vec u_0$, for each $t\in(0,T]$, we find $\Gt$ with $\vec v\in [H^1(\Gt)]^d$, $(\vec u,~p,~p_\Gamma)\in\mathbb{V}_\Gamma(\vec g)\times\mathbb{P}\times L^2(\Gt)$, and $(f_\Gamma, \varkappa,~\mathscr{V})\in L^2(\Gt)\times H^1(\mathcal{G}_T)\times H^1(\Gt)$ by solving \eqref{eqn:nvweak}, \eqref{eqn:ifweak} and \eqref{eq:Xweak}. Here, we note that $\mathscr{A}_{\Gamma(t)}(\rho_\Gamma[\vec u-\vec v];\vec u,\vec\chi)$ in \eqref{eq:nvweak1} vanishes since the fluid tangential velocity is incorporated for the evolving interface, recalling \eqref{eq:Xweak}.

The following theorem is the analogue of Lemma~\ref{lem:dtE}, which we now
prove in the context of the weak formulation.
\begin{thm}\label{thm:weak} If $\vec g = \vec 0$, then the weak solution
satisfies the energy law
\begin{subequations}
\begin{align}
\label{eq:energylawweak}
&\frac{\rm d}{\rm d t} E(t) 
+ 2\bigl(\mu\mat{D}(\vec u),~\mat{D}(\vec u)\bigr) + 2\mu_\Gamma\ipd{\mat{D}_s(\vec u),~\mat{D}_s(\vec u)}_{\Gt}=-\frac{\rho_+}{2}\ipd{|\vec u|^2, \vec u\cdot\vec n_{_{\partial\Omega}}}_{\partial_2\Omega}.
\end{align}
Besides, the weak solution satisfies 
\begin{align}\label{eq:volumeweak}
\ddt\vol(\Gamma(t)) &= 0.\qquad t\in[0,T], \\
\ddt|\Gamma(t)| &= 0.\qquad t\in[0,T]. \label{eq:areaweak}
\end{align}
\end{subequations}
\end{thm}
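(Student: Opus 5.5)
The energy law is obtained by testing each subsystem with a suitable function and adding the results. The volume and area identities follow by testing the constraint equations with constants.

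\emph{Bulk-surface Navier--Stokes part.} Since $\vec g=\vec 0$, I may take $\vec\chi=\vec u\in\mathbb{V}_\Gamma(\vec 0)$ in \eqref{eq:nvweak1}. Then:
\begin{itemize}
\item In the bulk bracket, $\bigl(\rho\,\Dtw\vec u,\vec u\bigr)-\bigl(\rho\,\vec u,\Dtw\vec u\bigr)$ cancels and $\frac12\ddt(\rho\vec u,\vec u)$ remains.
\item $\mathscr{A}(\rho[\vec u-\vec w];\vec u,\vec u)=0$ by skew-symmetry, and likewise on $\Gt$.
\item The same cancellation in the surface bracket leaves $\frac{\rho_\Gamma}{2}\ddt\ipd{\vec u,\vec u}_{\Gt}$.
\item The pressure terms vanish by choosing $q=p$ in \eqref{eq:nvweak2} and $q_\Gamma=p_\Gamma$ in \eqref{eq:nvweak3}.
\end{itemize}
This leaves the kinetic energy derivative, the two viscous dissipation terms, and the boundary term $\frac{\rho_+}{2}\ipd{\vec u\cdot\vec n_{\partial\Omega},|\vec u|^2}_{\partial_2\Omega}$ on the left. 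On the right it leaves $\alpha\ipd{f_\Gamma\vec\nu,\vec u}_{\Gt}=\alpha\ipd{f_\Gamma,\mathscr{V}}_{\Gt}$, using \eqref{eq:ifweak3} with $\xi=f_\Gamma$.

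\emph{Curvature part.} Here I must show $\ipd{f_\Gamma,\mathscr{V}}_{\Gt}=-\ddt\frac12\ipd{\varkappa-\bkap,\varkappa-\bkap}_{\Gt}$.
\begin{itemize}
\item Take $\varphi=\mathscr{V}$ in \eqref{eq:ifweak1}, which expresses $\ipd{f_\Gamma,\mathscr{V}}$ through $\ipd{\nabs\varkappa,\nabs\mathscr{V}}$, $\ipd{[\varkappa-\bkap]|\nabs\vec\nu|^2,\mathscr{V}}$ and $\frac12\ipd{[\varkappa-\bkap]^2\varkappa,\mathscr{V}}$.
\item Take $\chi=\varkappa-\bkap$ in \eqref{eq:ifweak2}. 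The bracket collapses to $\frac12\ddt\ipd{\varkappa-\bkap,\varkappa-\bkap}$, since $\Dtv\bkap=0$ so $\Dtv\chi=\Dtv\varkappa$. The skew term $\mathscr{A}_{\Gt}(\vec v;\varkappa-\bkap,\varkappa-\bkap)$ vanishes.
\item The remaining terms of \eqref{eq:ifweak2} are $\ipd{\nabs\mathscr{V},\nabs\varkappa}-\ipd{\mathscr{V}|\nabs\vec\nu|^2,\varkappa-\bkap}+\frac12\ipd{\mathscr{V},[\varkappa-\bkap]^2\varkappa}$. These are exactly the negatives of the corresponding terms from \eqref{eq:ifweak1}.
\end{itemize}
Adding the two identities gives the claim. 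Multiplying by $\alpha$ and combining with the Navier--Stokes identity yields \eqref{eq:energylawweak}, since the three energy pieces are those in \eqref{eq:Et}.

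\emph{Conservation laws.} For the volume, the transport theorem gives $\ddt\vol(\Gt)=\ipd{\vec v\cdot\vec\nu,1}_{\Gt}$. By \eqref{eq:Xweak1} with $\phi=1$ and \eqref{eq:ifweak3} with $\xi=1$, this equals $\ipd{\vec u\cdot\vec\nu,1}_{\Gt}$. This is $\int_{\Omega_-(t)}\nabla\cdot\vec u\,\dL^d$ by the divergence theorem, using continuity \eqref{eq:if1} so the trace is well defined. It vanishes by \eqref{eq:nvweak2} with $q=\mX_{\Omega_-(t)}$.

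That choice of $q$ needs care when $\mathbb{P}$ is mean-free. In that case I instead use $q=\mX_{\Omega_-(t)}-|\Omega_-(t)|/|\Omega|$. Its constant part contributes $\int_\Omega\nabla\cdot\vec u\,\dL^d=\int_{\partial\Omega}\vec u\cdot\vec n_{\partial\Omega}\,\dH^{d-1}=0$, because $\vec g=\vec 0$ and $\partial_2\Omega=\emptyset$.

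For the area, $\ddt|\Gt|=\ipd{\nabs\cdot\vec v,1}_{\Gt}$. Since $\vec v$ and $\vec u$ have the same normal component and, by \eqref{eq:Xweak2} with $\lambda_\Gamma=0$, the same tangential part, we have $\vec v=\vec u$ on $\Gt$. Hence this equals $\ipd{\nabs\cdot\vec u,1}_{\Gt}=0$ by \eqref{eq:nvweak3} with $q_\Gamma=1$.

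The main technical point is justifying $\vec v=\vec u$ on $\Gt$ and the pointwise identification $\ipd{f_\Gamma\vec\nu,\vec u}=\ipd{f_\Gamma,\mathscr{V}}$, i.e.\ that the weak equations \eqref{eq:Xweak} and \eqref{eq:ifweak3} may be tested with the specific functions used. These need enough regularity of the weak solution, namely $f_\Gamma\in L^2(\Gt)$ and $\varkappa-\bkap\in H^1(\mathcal{G}_T)$, which is assumed in the formulation. Everything else is cancellation by construction.
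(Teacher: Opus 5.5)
Your proposal is correct. For the energy law and the volume identity it follows the paper's proof: you use the same test functions, namely $\vec\chi=\vec u$, $q=p$, $q_\Gamma=p_\Gamma$, $\varphi=\mathscr{V}$, $\chi=\varkappa-\bkap$ and $\xi=f_\Gamma$ for the energy, and then $\phi=1$, $\xi=1$ and $q=\mX_{\Omega_-(t)}-\lambda(t)$ for the volume. Your explicit check that $(\nabla\cdot\vec u,1)=0$ in the mean-free case is a detail the paper leaves implicit.

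The area identity is where you take a genuinely different route. You use \eqref{eq:Xweak2} to conclude $\vec v=\vec u$ on $\Gt$, and then substitute directly into $\ipd{\nabs\cdot\vec v,1}_{\Gt}$. The paper uses only the normal agreement $\vec v\cdot\vec\nu=\vec u\cdot\vec\nu$ from \eqref{eq:Xweak1} and \eqref{eq:ifweak3}. It combines this with the closed-surface identity $\int_{\Gt}\nabs\cdot\vec f\,\dH^{d-1}=-\int_{\Gt}\varkappa\,\vec f\cdot\vec\nu\,\dH^{d-1}$, applied once to $\vec v$ and once to $\vec u$, so tangential components never enter.

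Your argument is more elementary, since no curvature identity is needed. However, it is tied to the particular tangential velocity prescribed by \eqref{eq:Xweak2}. The paper's argument shows that area conservation holds for any tangential motion. That is exactly what Remark~\ref{rem:tanwk} asserts for the BGN and MDR alternatives, where in general $\mat{P}_\Gamma\vec v\neq\mat{P}_\Gamma\vec u$ and your step $\vec v=\vec u$ would fail.
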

\begin{proof}
Choosing $\vec\chi=\vec u$ in \eqref{eq:nvweak1}, $q=p$ in \eqref{eq:nvweak2}, $q_\Gamma=p_\Gamma$ in \eqref{eq:nvweak3}, $\varphi = \mathscr{V}$ in \eqref{eq:ifweak1}, $\chi=(\varkappa-\bkap)$ in \eqref{eq:ifweak2} and $\xi=f_\Gamma$ in \eqref{eq:ifweak3}, and then combining these equations, we obtain \eqref{eq:energylawweak} straightforwardly. 

It follows from the Reynolds transport theorem that
\begin{equation}
\ddt\vol(\Gamma(t)) = 
\ipd{\vec\nu\cdot\vec v,~1}_{\Gt}.\label{eq:volumew1}
\end{equation}
Now, we choose $q = \mX_{\Omega_-(t)}-\lambda(t)$ in \eqref{eq:nvweak2} with $\lambda(t)=\frac{\vol(\Omega_-(t))}{\vol(\Omega)}$, $\xi=1$ in \eqref{eq:ifweak3} and $\phi=1$ in \eqref{eq:Xweak1} and combine these equations. Then, we obtain
\begin{equation}
\ipd{\vec\nu\cdot\vec v,~1}_{\Gt} = \ipd{\mathscr{V},~1}_{\Gt} = \ipd{\vec u\cdot\vec\nu,~1}_{\Gt} = \ipD{\nabla\cdot\vec u,~\mX_{_{\Omega_-(t)}}} =0,
\end{equation}
which implies \eqref{eq:volumeweak}, on recalling \eqref{eq:volumew1}.
Finally, it follows from \cite[Theorems~32, 21]{Barrett20}, \eqref{eq:Xweak1}
and \eqref{eq:ifweak3} that
\begin{align*}
\ddt|\Gamma(t)| = \ipd{\nabs\cdot\vec v,~1}_{\Gt} = -\ipd{\varkappa, \vec v \cdot \vec\nu}_{\Gt}
= -\ipd{\varkappa, \vec u \cdot \vec\nu}_{\Gt}= \ipd{\nabs\cdot\vec u,~1}_{\Gt} = 0,
\end{align*}
which yields \eqref{eq:areaweak}. 
\end{proof}

\begin{rem}\label{rem:tanwk}
It is noteworthy that the tangential velocity can be chosen freely without affecting the results in Theorem~\ref{thm:weak} for the weak solution. For instance, one could consider the `BGN' tangential velocity by replacing \eqref{eq:Xweak2} with  \cite{Barrett20}
\begin{equation}
\ipd{\lambda_\Gamma\,\vec\nu,~\vec\eta}_{\Gt}+\ipd{\nabs\vec\id,~\nabs\vec\eta}_{\Gt}=0\qquad\forall\vec\eta\in[H^1(\Gt)]^d,
\end{equation}
with $\lambda_\Gamma = \varkappa$ in the continuous setting. An alternative approach that could lead to good mesh quality is the minimal deformation rate (MDR) approach by replacing \eqref{eq:Xweak2} with \cite{Hu22evolving}
\begin{equation}
\ipd{\lambda_\Gamma\,\vec\nu,~\vec\eta}_{\Gt}+\ipd{\nabs\vec v,~\nabs\vec\eta}_{\Gt}=0\qquad\forall\vec\eta\in[H^1(\Gt)]^d.
\end{equation}
In the present work, for simplicity, we update the interface directly according to the fluid velocity, which generally preserves the quality of the interface mesh due to the surface incompressibility condition \eqref{eq:if3}.
\end{rem}

\section{Finite element approximations}
\label{sec:FEM}

\subsection{The discretization}
We partition the time interval as
\[[0,T]=\cup_{m=1}^M [t_{m-1},~t_m]\qquad\mbox{with}\quad t_m = m\ttau,\quad \ttau = T/M,\]
with a uniform time step size $\ttau$.

\vspace{0.2cm}
\noindent{\bf Interface discretization.}\par\noindent
The hypersurface $\Gamma(t_m)$ is approximated by a $(d-1)$-dimensional polyhedral surface
\begin{equation}
 \Gamma^m:= \cup_{j=1}^{J_\Gamma}\overline{\sigma_j^m}\qquad\mbox{with}\quad Q_\Gamma^m=\{\vec q_k^m\}_{k=1}^{K_\Gamma},\label{eq:ifdisc}
 \end{equation}
where $\{\sigma_j^m\}_{j=1}^{J_\Gamma}$ are mutually disjoint $(d-1)$-simplices in $\bR^d$, and $Q_\Gamma^m$ is a collection of the vertices. On the polyhedral surface $\Gamma^m$, we define the finite element spaces
\begin{subequations}
\begin{align*}
S_k^h(\Gm) &:= \bigl\{\chi\in C(\Gm):\quad \chi|_{\sigma_j^m}\in P_k(\sigma_j^m)\quad  j =1,\ldots, J_\Gamma\bigr\}, \quad k\in\bN_+,\\
S_0^h(\Gm) &:= \bigl\{\chi\in L^2(\Gm):\quad \chi|_{\sigma_j^m}\;\;\mbox{is constant}\quad j=1,\ldots, J_\Gamma\bigr\},
\end{align*}
\end{subequations}
where $P_k(o)$ denotes the space of polynomials of degree at most $k$ on $\sigma_j^m$. We also let $V^h(\Gm) = S_1^h(\Gm)$.

In addition, let
$\left\{\vec q_{j_k}^{m}\right\}_{k=0}^{d-1}$ be the vertices of $\sigma_j^{m}$, ordered with the same orientation for all $\sigma_j^{m}$, $j=1,\ldots, J_\Gamma$. We also denote
$\sigma_j^{m}=\Delta\left\{\vec q_{j_k}^{m}\right\}_{k=0}^{d-1}$ and introduce the unit normal vector $\vec{\nu}^m$ to $\Gamma^m$, such that $\vec\nu^m\in [S_0^h(\Gm)]^d$
\begin{equation}\label{eq:vG}
\vec{\nu}^m_{j} := \vec{\nu}^m \mid_{\sigma^{m}_j} :=
\frac{\vec N\{\sigma_j^{m}\}}{
|\vec N\{\sigma_j^{m}\}|}\quad\mbox{ with}\quad \vec N\{\sigma_j^{m}\}=( \vec{q}^{m}_{j_1} - \vec{q}^{m}_{j_0} ) \wedge \ldots \wedge
( \vec{q}^{m}_{j_{d-1}} - \vec{q}^{m}_{j_0}),
\end{equation}
where $\wedge$ denotes the wedge product and $\vec N\{\sigma_j^{m}\}$ is the orientation vector of $\sigma_j^{m}$. To approximate the inner product $\langle\cdot,\cdot\rangle_{\Gamma(t_m)}$, we also introduce
$\langle\cdot,\cdot\rangle_{\Gamma^m}$ and $\langle\cdot,\cdot\rangle_{\Gamma^m}^h$ over the current polyhedral surface $\Gamma^m$ via
\begin{subequations}
\begin{align}
\label{eq:erule}
\langle a,~ b\rangle_{\Gamma^m} &:=
\int_{\Gamma^m} a\cdot b\; \dH^{d-1},\\
\langle a, b \rangle^h_{\Gamma^m} &:=
\frac{1}{d}\sum_{j=1}^{J_\Gamma} |\sigma^{m}_j|
\sum_{k=0}^{d-1}
\underset{\sigma^{m}_j\ni \vec{p}\to \vec{q}^{m}_{j_k}}{\lim}(a\cdot b)(
\vec{p}),
\label{eq:tprule}
\end{align}
\end{subequations}
where $a,b$ are piecewise continuous, with possible jumps only across the edges of $\{\sigma^{m}_j\}_{j=1}^{J_\Gamma}$, and
$|\sigma^{m}_j| = \frac{1}{(d-1)!}|\vec N\{\sigma_j^{m}\}|$
is the measure of $\sigma^{m}_j$. We also induce the discrete projection operator
\[\mat{P}_{\Gm} = \mat{\Id} - \vec\nu^m\otimes\vec\nu^m\qquad\mbox{on}\quad\Gm,\]
and the discrete surface deformation tensor
\[\mat{D}_s^m(\vec \eta) = \frac{1}{2}\mat{P}_{\Gamma^m}(\nabs\vec\eta + (\nabs\vec \eta)^T)\mat{P}_{\Gamma^m}\qquad\mbox{on}\qquad\Gm,\]
where $\nabs$ is taken on $\Gamma^m$.

The discrete normals $\vec\nu^m$ are piecewise constants. For later use, we follow the work in \cite{BGN08parametric} and introduce the vertex normal vector $\vec\nu_p^{m}\in [V^h(\Gm)]^d$.
Specifically, $\vec\nu_p^m\in [V^h(\Gm)]^d$ is defined as the mass-lumped $L^2$--projection of $\vec\nu^m$ onto $[V^h(\Gm)]^d$, i.e.,
\begin{equation} \label{eq:nuhomegah}
\ipd{\vec\nu_p^m, \vec\eta^h}_{\Gm}^h = \ipd{\vec\nu^m,~\vec\eta^h}_{\Gm}^h = \ipd{\vec\nu^m,~\vec\eta^h}_{\Gm}\quad\forall\vec\eta^h\in [V^h(\Gm)]^d.
\end{equation}
For the parametric finite element spaces associated with $\Gm$, we introduce the standard interpolation operator \[\vec\pi_l^m: [C(\Gm)]^d\to [S_l^h(\Gm)]^d.\]

\vspace{0.2cm}
\noindent{\bf Bulk discretization}: At time $t_m$, we consider a regular partition of $\Omega$ with $K^m_\Omega$ vertices as
\begin{equation*}
\overline{\Omega} = \cup_{j = 1}^{J_\Omega} \overline{o_j^m}\quad\mbox{with}\quad Q^m=\{\vec a_k^m\}_{k=1}^{K_\Omega},\quad \mathscr{T}^m=\bigl\{o_j^m: j = 1,\ldots,J_\Omega\bigr\},
\end{equation*}
where $\{o_j^m\}_{j=1}^{J_\Omega}$ are the mutually disjoint open simplices in $\bR^d$, and $\mathscr{T}^m$ is the bulk mesh. We then introduce the finite element spaces associated with $\mathscr{T}^m$ as
\begin{subequations}
\begin{align*}
S_k^m(\Omega) &:= \bigl\{\chi\in C(\overline{\Omega}):\quad \chi|_{o}\in P_k(o)\quad\forall o\in\mathscr{T}^m\bigr\}, \quad k\in\bN_+,\\
S_0^m(\Omega) &:= \bigl\{\chi\in L^2(\Omega):\quad \chi|_{o}\;\;\mbox{is constant}\quad\forall o\in\mathscr{T}^m\bigr\},
\end{align*}
\end{subequations}
where $P_k(o)$ denotes the space of polynomials of degree at most $k$ on $o$.

It is natural to consider the following discrete analogues of the (affine) function spaces in \eqref{eqn:UPspaces} with Taylor--Hood pair elements:
\begin{subequations}\label{eq:UPspaces}
\begin{align}\label{eq:P2P1}
\mbox{P2-P1}:&\quad\bigl(\mathbb{U}^m(\vec g),~\mathbb{P}^m\bigr)= \bigl([S_2^m(\Omega)]^d\cap\mathbb{U}(\vec g),~S_1^m(\Omega)\cap\mathbb{P}\bigr),\\\label{eq:P2P0}
\mbox{P2-P0}:&\quad\bigl(\mathbb{U}^m(\vec g),~\mathbb{P}^m\bigr)= \bigl([S_2^m(\Omega)]^d\cap\mathbb{U}(\vec g),~S_0^m(\Omega)\cap\mathbb{P}\bigr),\\\label{eq:P2P1P0}
\mbox{P2-(P1+P0)}:&\quad\bigl(\mathbb{U}^m(\vec g),~\mathbb{P}^m\bigr)= \bigl([S_2^m(\Omega)]^d\cap\mathbb{U}(\vec g),~(S_1^m(\Omega)+S_0^m(\Omega))\cap\mathbb{P}\bigr),
\end{align}
\end{subequations}
which usually satisfy the LBB inf-sup stability condition, see \cite{BrezziF91, Boffi2012local, BGN15stable}.

\subsection{The discrete ALE mappings}
\label{sec:dALEmap}

\vspace{0.2cm}
\noindent{\bf Mappings on the interface}: At each time step, we now assume that we are given the polyhedral surface
$\Gamma^{m}=\vec X^m(\Gamma^{m-1})$ with discretization given in \eqref{eq:ifdisc}. We introduce the discrete surface velocity $ \mathscr{\vv V}^m\in [V^h(\Gm)]^d$ defined by
\[\mathscr{\vv V}^m(\vec q_k^m) = \frac{\vec q_k^m - \vec q_k^{m-1}}{\ttau},\quad\mbox{for}\quad k = 1,\ldots, K_\Gamma.\]
We also define the continuous-in-time ALE mapping
\begin{equation}\label{eq:ALESur}
\vec\Phi_\Gamma^m[t]=\vec\id|_{\Gm} - \mathscr{\vv V}^m\,(t_m-t),\qquad t\in[t_{m-1}, t_m],
\end{equation}
which provides a mapping between the physical domain at time $t$ and the surface reference configuration $\Gamma^m$. In particular, by \eqref{eq:fALET}, we note that it holds that
\begin{equation}
\Dtv(\vec\eta\circ\vec\Phi_\Gamma^m[t])=\vec 0,\qquad\forall\vec\eta\in [H^1(\Gamma^m)]^d.
\end{equation}
The mappings in \eqref{eq:ALESur} lead to discrete mappings $\vec\Phi_\Gamma^m[t_{m-1}]\in V^h(\Gm)$ such that
\begin{equation*}
\vec\Phi_\Gamma^m[t_{m-1}] = \vec\id|_{\Gm} - \mathscr{\vv V}^m\,\ttau\quad\mbox{with}\quad \vec\Phi_\Gamma^m[t_{m-1}](\Gm) = \Gamma^{m-1}, \quad \vec\Phi_\Gamma^m[t_{m-1}](\vec q_k^m) = \vec q_k^{m-1},
\end{equation*}
for $1\leq k\leq K_\Gamma$. This also yields the discrete pushforward and pullback operators as follows:
\begin{subequations}\label{eqn:surfmap}
\begin{align}
&\Pi_{m-1}^{m}: C(\Gamma^{m-1})\to C(\Gm)\quad\mbox{so that}\quad \Pi_{m-1}^m z= z\circ\vec\Phi_\Gamma^m[t_{m-1}]\quad\forall z\in C(\Gamma^{m-1});\\
&\Pi_m^{m-1}: C(\Gm)\to C(\Gamma^{m-1})\quad\mbox{so that}\quad \Pi_{m}^{m-1} z = z\circ\{\vec\Phi_\Gamma^m[t_{m-1}]\}^{-1}\quad\forall z\in C(\Gamma^{m});
\end{align}
\end{subequations}
with the above definitions naturally extended to vector-valued function spaces.

\vspace{0.2cm}
\noindent{\bf Mappings in the bulk}: We first consider the approximation in the framework of the fitted mesh approach. At each time step, we again assume that we are given the polyhedral surface
$\Gamma^{m}=\vec X^m(\Gamma^{m-1})$.
Based on the previous bulk mesh $\mathscr{T}^{m-1}$, we then construct the new bulk mesh $\mathscr{T}^{m}$ by updating the vertices of the mesh according to
\begin{equation}
\vec a_k^{m} = \vec a_k^{m-1} + \vec\psi^m(\vec a_k^{m-1}),\qquad 1\leq k\leq K_\Omega, \quad 1\leq m\leq M.\label{eq:bdd}
\end{equation}
Here we recall that $Q^m=\{\vec a_k^m\}_{k=1}^{K_\Omega}$ are the vertices of $\mathscr{T}^m$, and $\vec\psi^m\in [S_1^{m-1}(\Omega)]^d$ is the displacement of the bulk mesh, which is obtained by seeking
$\vec\psi^m\in\mathbb{Y}^{m-1}$ such that
\begin{equation}
2\bigl(\lambda^{m-1}\,\mat{\tD}(\vec\psi^m),~\mat{\tD}(\vec\chi)\bigr) + \bigl(\lambda^{m-1}\,\nabla\cdot\vec\psi^m,~\nabla\cdot\vec\chi\bigr)=0\qquad\forall\vec\chi\in\mathbb{Y}_0^{m-1}, \label{eq:elastic}
\end{equation}
where we define
\begin{align*}
\mathbb{Y}^{m-1}&=\bigl\{\vec\chi\in[S_1^{m-1}(\Omega)]^d:\,\vec\chi\cdot\vec n _{\partial\Omega}= 0\;\;\mbox{on}\;\;\partial\Omega;\;\vec\chi = \vec X^m - \vec\id\;\;\mbox{on}\;\;\Gamma^{m-1}\bigr\},\\
\mathbb{Y}_0^{m-1}&=\bigl\{\vec\chi\in[S_1^{m-1}(\Omega)]^d:\,\vec\chi\cdot\vec n_{\partial\Omega} = 0\;\;\mbox{on}\;\;\partial\Omega;\;\vec\chi = \vec 0\;\;\mbox{on}\;\;\Gamma^{m-1}\bigr\},\\
\lambda^{m-1} &\in S^{m-1}_0(\Omega),\qquad (\lambda^{m-1})_{|o} = 1 + \frac{\max\limits_{o\in\mathscr{T}^{m-1}} |o| - \min\limits_{o\in\mathscr{T}^{m-1}}|o|}{|o|},
\qquad \forall o\in\mathscr{T}^{m-1}.
\end{align*}
Here, $\lambda^m$ is used to limit the distortion of small elements \cite{Masud1997space,GNZ23}.

In the aforementioned mesh moving strategy, the mesh velocity $\vec W^m(\cdot)\in [S_1^m(\Omega)\cap H_0^1(\Omega)]^d$ is defined by
\begin{equation*}
\vec W^m(\vec a_k^m) = \frac{\vec a_k^{m}-\vec a_k^{m-1}}{\ttau},\quad\mbox{for}\quad k = 1,\ldots, K_\Omega,
\end{equation*}
which also satisfies the constraint that
\begin{equation}\label{eq:ALEcond}
 \vec W^m =  \mathscr{\vv V}^m\quad\mbox{on}\quad\Gamma^m,
 \end{equation}
to ensure that the interface mesh $\Gamma^m$ remains fitted to the bulk mesh $\mathscr{T}^m$.

Similarly, we introduce the continuous-in-time ALE mapping
\begin{equation}\label{eq:ALEBulk}
\vec\Phi^m[t] = \vec\id|_{\Omega}- \vec W^m\,(t_m-t),\quad t\in[t_{m-1}, t_m],
\end{equation}
which provides a mapping between the physical domain at time $t$ and the reference configuration. In particular, by \eqref{eq:ALET} we have
\begin{equation}
\Dtw (\vec\eta\circ\vec\Phi^m[t]) = 0\qquad\forall\vec\eta\in[S_k^m(\Omega)]^d.
\end{equation}
The mapping \eqref{eq:ALEBulk} at $t_{m-1}$ also yields a discrete mapping
\begin{equation*}
\vec\Phi^m[t_{m-1}]=\vec\id|_{\Omega} - \vec W^m\,\ttau,\quad\mbox{with}\quad \vec\Phi^m[t_{m-1}](\Omega) = \Omega, \quad \vec\Phi^m[t_{m-1}](\vec a_k^m) = \vec a_k^{m-1},
\end{equation*}
for $1\leq k\leq K_\Omega$. Similarly, we also define the pushforward and pullback operators on $\Omega$ as
\begin{subequations}
 \begin{align*}
&\Lambda_{m-1}^{m}: C(\bar{\Omega})\to C(\bar{\Omega})\quad\mbox{so that}\quad \Lambda_{m-1}^m z= z\circ\vec\Phi^m[t_{m-1}]\quad\forall z\in C(\bar{\Omega});\\
&\Lambda_m^{m-1}: C(\bar{\Omega})\to C(\bar{\Omega})\quad\mbox{so that}\quad \Lambda_{m}^{m-1} z = z\circ(\vec\Phi^m[t_{m-1}])^{-1}\quad\forall z\in C(\bar{\Omega}), 
\end{align*}
\end{subequations}
with the above definitions naturally extended to vector-valued function spaces.

\subsection{The ALE method}

In the fitted mesh approach, the bulk mesh $\mathscr{T}^m$ can be simply divided into the interior and exterior elements $\mathscr{T}_-^m$ and $\mathscr{T}_+^m$
\begin{equation*}
 \mathscr{T}_{\pm}^m:=\left\{o\in\mathscr{T}^m:\; o\subset\Omega_\pm^m\right\},
 \end{equation*}
where $\Omega_-^m$ and $\Omega_+^m$ again denote the interior and exterior of $\Gamma^m$, respectively. Therefore, the discrete densities and viscosities can be defined naturally as
\begin{equation}
\rho^m = \rho_-\mX_{_{\Omega_-^m}}+\rho_+\mX_{_{\Omega_+^m}},\qquad \mu^m = \mu_-\mX_{_{\Omega_-^m}}+\mu_+\mX_{_{\Omega_+^m}}.\label{eq:DisQ}
\end{equation}

We are now ready to introduce the fully discrete ALE finite element method for the coupled system. We employ the $\mbox{P2-(P1+P0)}$ element pair, as defined in \eqref{eq:P2P1P0}, and the scheme is split into three parts, which provide approximations for the Navier--Stokes equations, the curvature force, and the evolving surface.

\noindent{\bf For the NS equations}:
Let $\Gamma^0$ be an approximation of $\Gamma(0)$ and $\vec U^0\in \bU^0(\vec g)$ be the initial discrete velocity. We set $\vec W^0=\vec 0$. For each $m\geq 0$, we find $(\vec U^{m+1}, P^{m+1}, P_\Gamma^{m+1})\in \bU^m(\vec g)\times\bP^m\times V^h(\Gm)$ and $P_{\rm sing}^{m+1}\in\bR$ such that
\begin{subequations}\label{eqn:fdnv}
\begin{align}
\label{eq:fdnv1}
&\frac{1}{2\,\ttau}\left[\ipD{\rho^m\,\vec U^{m+1}, ~\vec\xi^h} + \ipD{\rho^{m-1}(\Lambda_{m}^{m-1}\vec U^{m+1}), ~\Lambda_m^{m-1}\vec\xi^h}\right]-\ipD{P^{m+1},~\nabla\cdot\vec\xi^h}\nn\\
&\quad+ 2\ipD{\mu^m\,\mat{D}(\vec U^{m+1}),~\mat{D}(\vec\xi^h)}-P_{\rm sing}^{m+1}\ipd{\vec\nu_p^m,~\vec\xi^h}_{\Gm}^h +\mathscr{A}\left(\rho^m[\Lambda_{m-1}^m\vec U^m-\vec W^{m}];\vec U^{m+1},\vec\xi^h\right) \nn\\
&\quad+\frac{\rho_\Gamma}{2\,\ttau}\left[\ipd{\vec U^{m+1},~\vec\xi^h}_{\Gm}+ \ipd{\Pi_m^{m-1}\vec U^{m+1},~\Pi_m^{m-1}\vec\xi^h}_{\Gamma^{m-1}}\right] \nn\\
&\quad- \ipd{P^{m+1}_\Gamma,~\nabs\cdot[\vec\pi_1^m\vec\xi^h]}_{\Gm}   +2\mu_\Gamma\ipd{\mat{D}_s^m(\vec U^{m+1}),~\mat{D}_s^m(\vec\xi^h)}_{\Gm}\nn\\
&\quad = \frac{1}{\ttau}\ipD{\rho^{m-1}\,\vec U^m,~\vec\xi^h} + \frac{\rho_\Gamma}{\ttau}\ipd{\vec U^m,~\Pi_m^{m-1}\vec\xi^h}_{\Gamma^{m-1}}+ \alpha\ipd{F_\Gamma^{m+1}\vec\nu^m,~\vec\xi^h}_{\Gm}^h\nn\\
&\quad\qquad  -\frac{1}{2}\ipd{\rho_+\vec U^m\cdot\vec n_{\partial\Omega},~\vec U^m\cdot\vec\xi^h\,}_{\partial_2\Omega}\qquad\forall\vec\xi^h\in\bU^m(\vec 0); \\[0.5em]
\label{eq:fdnv2}
&\ipD{\nabla\cdot\vec U^{m+1},~q^h} = 0\qquad\forall q^h\in \bP^m,\qquad\mbox{and}\quad \ipd{\vec U^{m+1}, \vec\nu_p^m}_{\Gm}^h=0;\\[0.5em]
&\ipd{\nabs\cdot[\vec\pi_1^m\vec U^{m+1}],~q^h_{\Gamma}}_{\Gamma^m}=0\qquad\forall q^h_\Gamma\in V^h(\Gm);
\label{eq:fdnv3}
\end{align}
\end{subequations}
where the computation of $F_\Gamma^{m+1}$ will be given later. Here $P_{\rm sing}^{m+1}$ is the Lagrange multiplier for the constraint 
\begin{equation*}
\quad \ipd{\vec U^{m+1}, \vec\nu_p^m}_{\Gm}^h=0,
\end{equation*}
which helps improve the volume preservation of the scheme.

\noindent{\bf For the curvature equations}:
Let $\varkappa^0\in V^h(\Gamma^0)$ be an approximation of the initial curvature $\varkappa(\cdot,0)$, and set $\mathscr{\vv V}^{0}=\vec 0$. For each $m\geq0$, we first obtain an appropriate approximation $\mathcal{W}^m \in L^\infty(\Gamma^m)$ of $|\nabs\vec\nu|^2$ at $t_m$. Specifically, we let
\begin{equation*} 
\mathcal{W}^m =| \nabs\vec\nu_n^m|^2\quad\mbox{with}\quad  \vec\nu_n^m(\vec q) =\frac{\vec\nu_p^m(\vec q)}{|\vec\nu_p^m(\vec q)|}\qquad\forall\vec q\in Q_\Gamma^m, 
\end{equation*}
where $\vec\nu_p^m$ is the vertex normal defined in \eqref{eq:nuhomegah}, which from now on we assume to be nondegenerate, and $\vec \nu_n^m\in[V^h(\Gm)]^d$ is the normalized vertex normal. Next, we find $(F_\Gamma^{m+1}, \varkappa^{m+1}, \mathscr{V}^{m+1})\in [V^h(\Gamma^m)]^3$ such that
\begin{subequations}\label{eqn:fdsf}
\begin{align}
\label{eq:fdsf1}
&\ipd{F_\Gamma^{m+1},~\varphi^h}_{{\Gm}}^h-\ipd{\nabs\varkappa^{m+1},~\nabs\varphi^h}_{\Gm} + \ipd{\mathcal{W}^m\,[\varkappa^{m+1}-\bkap],~\varphi^h}_{\Gm}\nn\\
&\qquad\quad -\frac{1}{2}\ipd{[\Pi_{m-1}^m\varkappa^m-\bkap]\,\Pi_{m-1}^m\varkappa^m\,[\varkappa^{m+1}-\bkap],~\varphi^h}_{\Gm}=0\quad\forall\varphi^h\in V^h(\Gm);\\[0.5em]
\label{eq:fdsf2}
&\frac{1}{2\,\ttau}\Bigl[\ipd{\varkappa^{m+1}-\bkap, ~\chi^h}_{\Gamma^m} + \ipd{\Pi_{m}^{m-1}\varkappa^{m+1}-\bkap,~\Pi_{m}^{m-1}\chi^h}_{\Gamma^{m-1}}\Bigr]\nn\\
&\qquad\quad-\mathscr{A}_{\Gamma^m}(\mathscr{\vv V}^{m}; \varkappa^{m+1}-\bkap,\chi^h)+\ipd{\nabs\mathscr{V}^{m+1}, \nabs\chi^h}_{\Gamma^m}-\ipd{\mathscr{ V}^{m+1}\,\mathcal{W}^m,~\chi^h}_{\Gamma^m}\nn\\
&\qquad\qquad  + \frac{1}{2}\ipd{\mathscr{V}^{m+1},~[\Pi_{m-1}^{m}\varkappa^m-\bkap]\Pi_{m-1}^{m}\varkappa^m\,\chi^h}_{\Gamma^m}\nn\\
&\qquad\qquad = \frac{1}{\ttau}\ipd{\varkappa^m-\bkap,~\Pi_{m}^{m-1}\chi^h}_{\Gamma^{m-1}}\qquad\forall\chi^h\in V^h(\Gamma^m);\\[0.5em]
&\ipd{\mathscr{V}^{m+1},~\zeta^h}_{\Gm}^h-\ipd{\vec U^{m+1}\cdot\vec\nu^m,~\zeta^h}_{\Gm}^h=0\qquad\forall\zeta^h\in V^h(\Gm);
\label{eq:fdsf3}
\end{align}
\end{subequations}
where $\mathscr{V}^{m+1}$ will be used to determine the new interface.

\noindent{\bf For the evolving surface}:
In our parametric finite element approximations, we have the freedom to choose the tangential velocity to enhance the mesh quality. Since the surface fluid velocity is incompressible, we choose to evolve the interface directly using the fluid velocity. This provides the finite element approximations of the weak formulation in \eqref{eq:Xweak}. For each $m\geq 0$ and given $\mathscr{V}^{m+1}\in V^h(\Gm)$, we seek $\vec X^{m+1}\in [V^h(\Gm)]^d$ and $\lambda^{m+1}_\Gamma\in V^h(\Gm)$ such that
\begin{subequations}\label{eqn:Xfull}
 \begin{align}
 \label{eq:Xfull1}
&\ipd{\frac{\vec X^{m+1}-\vec\id}{\ttau}\cdot\vec\nu_p^m,~\eta^h}_{\Gm}^h = \ipd{\mathscr{V}^{m+1},~\eta^h}_{\Gm}^h\qquad\forall\eta^h\in V^h(\Gm),\\
\label{eq:Xfull2}
&\ipd{\mat{P}_{\Gm}\frac{\vec X^{m+1}-\vec\id}{\ttau},~\vec\eta^h}_{\Gm}^h +\ipd{\lambda^{m+1}_\Gamma\,\vec\nu_p^m,~\vec\eta^h}_{\Gm}^h
= \ipd{\mat{P}_{\Gm}\vec U^{m+1},~\vec\eta^h}_{\Gm}^h\qquad\forall\vec\eta^h\in [V^h(\Gm)]^d.
 \end{align}
\end{subequations}
Once $\vec X^{m+1}$ is obtained, it defines the new polyhedral surface through $\Gamma^{m+1} = \vec X^{m+1}(\Gm)$.

In practice, at each time step, we have two stages. In the first stage, we solve for \eqref{eqn:fdnv} and \eqref{eqn:fdsf} simultaneously to obtain $\vec U^{m+1}$, $\varkappa^{m+1}$ and $\mathscr{V}^{m+1}$. At the next stage, we then
use $(\mathscr{V}^{m+1}, \vec U^{m+1})$ to update the fluid interface $\Gamma^{m+1}$ according to \eqref{eqn:Xfull}. This then provides the required data to perform our moving mesh strategies in \eqref{eq:bdd} for the next time step.

\begin{rem}
For the approximation of the surface incompressibility, the interpolation operator $\vec\pi_1^m$ is employed in both \eqref{eq:fdnv1} and \eqref{eq:fdnv3} in order to improve the surface area preservation of the scheme, see also the unfitted approach in \cite{BGN16sfluidic}. 
\end{rem}

\subsection{Properties of the method}\label{sec:pro}

We now investigate the solvability of the introduced method. We denote by $LBB_\Gamma$ the  inf-sup stability condition if there exists a $C_0>0$ that is independent of $\mathscr{T}^h$ and $\{\sigma_j^m\}_{j=1}^{J_\Gamma}$ such that
\begin{equation}
 \inf_{(\varphi, \eta,\lambda)\in \mathbb{P}\times V^h(\Gm)\times\bR}\sup_{\vec\xi\in \bU^m(\vec 0)}\frac{\ipD{\varphi,~\nabla\cdot\vec\xi} + \ipd{\eta,~\nabla_s\cdot[\vec\pi_1^m\vec\xi]}_{\Gm} + \lambda\ipd{\vec\nu_p^m, \vec\xi}^h_{\Gm}}{(\norm{\varphi}_0 + \norm{\eta}_{\Gm,0} + |\lambda|)(\norm{\vec\xi}_1 + \norm{\mat{P}_{\Gm}[\vec\pi_1^m\vec\xi]}_{\Gm,1})}\geq C_0>0,\label{eq:infsup}
 \end{equation}
where $\norm{\cdot}_0$ and $\norm{\cdot}_{1}$ are the $L^2$-- and $H^1$--norm on $\Omega$, and $\norm{\cdot}_{\Gm,0}$ and $\norm{\cdot}_{\Gm,1}$ are the $L^2$-- and $H^1$--norm on $\Gm$ with $\norm{\vec\xi}_{\Gm,1}^2:=\ipd{\vec\xi,~\vec\xi}_{\Gm} + \ipd{\nabla_s\vec\xi, ~\nabla_s\vec\xi}_{\Gm}$. Compare with
\cite[(5.2)]{BGN17fluidic}.

We have the following theorem for the coupled linear system.
\begin{thm}[existence and uniqueness]
Assuming that the $LBB_\Gamma$ inf-sup stability condition \eqref{eq:infsup} holds, then the linear system of \eqref{eqn:fdnv}, \eqref{eqn:fdsf} admits a unique solution $(\vec U^{m+1}, P^{m+1}, P_\Gamma^{m+1}, F_\Gamma^{m+1}, \varkappa^{m+1}, \mathscr{V}^{m+1}, P_{\rm sing}^{m+1})$.
Moreover, assuming that the vertex normals in \eqref{eq:nuhomegah} satisfy
\begin{equation}
\vec\nu_p^m(\vec q)\neq 0\quad\forall \vec q\in Q_\Gamma^m,\label{eq:vnormalassump}
\end{equation}
then the linear system \eqref{eqn:Xfull} admits a unique solution $\vec X^{m+1}\in [V^h(\Gm)]^d$ and $\lambda^{m+1}_\Gamma=0 \in V^h(\Gm)$.

\end{thm}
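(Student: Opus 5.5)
Since \eqref{eqn:fdnv}, \eqref{eqn:fdsf} is a square linear system, existence follows from uniqueness. I will therefore show that the homogeneous system has only the trivial solution. The homogeneous system is obtained by setting $\vec g=\vec 0$ and dropping all right-hand side terms that involve $\vec U^m$, $\varkappa^m-\bkap$ and $\bkap$. To make this precise, I replace $\varkappa^{m+1}-\bkap$ by an unknown $\hat\varkappa\in V^h(\Gm)$ and view the whole system as linear in $(\vec U,P,P_\Gamma,F,\hat\varkappa,\mathscr{V},P_{\rm sing})$. The coefficients $\mathcal{W}^m$ and $[\Pi^m_{m-1}\varkappa^m-\bkap]\Pi^m_{m-1}\varkappa^m$ are known data.

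The plan mimics the energy argument of Theorem~\ref{thm:weak}. In \eqref{eq:fdnv1} I test with $\vec\xi^h=\vec U$. The divergence terms vanish by \eqref{eq:fdnv2} and \eqref{eq:fdnv3}, and the $P_{\rm sing}$ term vanishes by the constraint in \eqref{eq:fdnv2}. The skew term $\mathscr{A}$ vanishes by antisymmetry. What remains is the nonnegative quantity
\[
\tfrac{1}{2\ttau}\bigl[(\rho^m\vec U,\vec U)+(\rho^{m-1}\Lambda^{m-1}_m\vec U,\Lambda^{m-1}_m\vec U)\bigr]
+\tfrac{\rho_\Gamma}{2\ttau}\bigl[\langle\vec U,\vec U\rangle_{\Gm}+\langle\Pi^{m-1}_m\vec U,\Pi^{m-1}_m\vec U\rangle_{\Gamma^{m-1}}\bigr]
+2(\mu^m\mat{D}(\vec U),\mat{D}(\vec U))
+2\mu_\Gamma\langle\mat{D}^m_s(\vec U),\mat{D}^m_s(\vec U)\rangle_{\Gm},
\]
and it equals $\alpha\langle F\vec\nu^m,\vec U\rangle^h_{\Gm}$. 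By \eqref{eq:fdsf3} this right-hand side is $\alpha\langle F,\mathscr{V}\rangle^h_{\Gm}$.

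Next I test \eqref{eq:fdsf1} with $\varphi^h=\mathscr{V}$ and \eqref{eq:fdsf2} with $\chi^h=\hat\varkappa$. Then $\langle F,\mathscr{V}\rangle^h$ equals
\[
\langle\nabs\hat\varkappa,\nabs\mathscr{V}\rangle-\langle\mathcal{W}^m\hat\varkappa,\mathscr{V}\rangle+\tfrac12\langle c\,\hat\varkappa,\mathscr{V}\rangle,
\qquad c:=[\Pi^m_{m-1}\varkappa^m-\bkap]\Pi^m_{m-1}\varkappa^m .
\]
The homogeneous \eqref{eq:fdsf2} with $\chi^h=\hat\varkappa$ gives exactly minus this combination, up to two terms. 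The first is the skew term $\mathscr{A}_{\Gm}(\mathscr{\vv V}^m;\hat\varkappa,\hat\varkappa)$, which is $0$. The second is $\tfrac{1}{2\ttau}\bigl[\|\hat\varkappa\|^2_{\Gm}+\|\Pi^{m-1}_m\hat\varkappa\|^2_{\Gamma^{m-1}}\bigr]$. Hence $\langle F,\mathscr{V}\rangle^h=-\tfrac{1}{2\ttau}[\cdots]\le0$.

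Adding the two identities, every term is nonnegative and sums to zero, because $\alpha>0$. So $\hat\varkappa=0$. If $\rho_\pm>0$ or $\rho_\Gamma>0$ we also get $\vec U=0$ directly. In the general case we only get $\mat{D}(\vec U)=0$, but Korn's inequality with $\mathscr{H}^{d-1}(\partial_1\Omega)>0$ and $\vec U=\vec 0$ on $\partial_1\Omega$ then gives $\vec U=\vec 0$. From \eqref{eq:fdsf3}, mass lumping gives $\mathscr{V}=0$, and then \eqref{eq:fdsf1} gives $F=0$. Finally \eqref{eq:fdnv1} reduces to
\[
(P,\nabla\cdot\vec\xi)+\langle P_\Gamma,\nabs\cdot[\vec\pi_1^m\vec\xi]\rangle_{\Gm}+P_{\rm sing}\langle\vec\nu_p^m,\vec\xi\rangle^h_{\Gm}=0\qquad\forall\vec\xi\in\bU^m(\vec 0),
\]
and the inf-sup condition \eqref{eq:infsup} yields $P=P_\Gamma=P_{\rm sing}=0$. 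The step that needs most care is checking that the nonlinear-looking coefficient terms in \eqref{eq:fdsf1} and \eqref{eq:fdsf2} cancel exactly with these test functions, i.e.\ that the coefficient $c$ and the quadrature rules match in the two equations. They do: both use $\langle\cdot,\cdot\rangle_{\Gm}$ for the $\mathcal{W}^m$ and $c$ terms, and $\langle\cdot,\cdot\rangle^h$ only for the $F$ and $\mathscr{V}$ coupling.

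For \eqref{eqn:Xfull}, the system is again square with unknowns $(\vec X^{m+1},\lambda_\Gamma)$, so I consider the homogeneous problem in $(\vec Y,\lambda):=((\vec X^{m+1}-\vec\id)/\ttau,\lambda^{m+1}_\Gamma)$. Because $\langle\cdot,\cdot\rangle^h$ is nodal, both equations hold pointwise at each vertex $\vec q_k$ after dividing by the positive lumped weight. The first gives $\vec Y\cdot\vec\nu_p^m=0$ at each vertex. For the second, nodality alone is not enough, since $\mat{P}_{\Gm}$ is piecewise constant and a vertex sees several elements. Instead I take $\vec\eta^h=\vec Y$ in \eqref{eq:Xfull2}. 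The $\lambda$-term vanishes because $\vec Y\cdot\vec\nu_p^m=0$ at the nodes. What remains is $\langle\mat{P}_{\Gm}\vec Y,\vec Y\rangle^h=0$, so $\mat{P}_{\Gm}\vec Y=0$ at each vertex and every adjacent element. Thus $\vec Y(\vec q)$ is parallel to $\vec\nu^m_j$ for every element $j$ containing $\vec q$.

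Since $\vec\nu_p^m(\vec q)$ is a positive combination of those $\vec\nu^m_j$ and is nonzero by \eqref{eq:vnormalassump}, we get $\vec Y(\vec q)\cdot\vec\nu_p^m(\vec q)=\pm|\vec Y(\vec q)|\,|\vec\nu_p^m(\vec q)|$. If the adjacent normals differ, $\vec Y(\vec q)$ must be $\vec 0$ outright. Either way $\vec Y=\vec 0$. With $\vec Y=\vec 0$, \eqref{eq:Xfull2} gives $\sum_j|\sigma_j|\,\lambda(\vec q)\,\vec\nu^m_j\propto\lambda(\vec q)\vec\nu_p^m(\vec q)=0$, so $\lambda=0$ by \eqref{eq:vnormalassump}. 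This proves uniqueness, hence existence.

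It remains to show that $\lambda^{m+1}_\Gamma=0$ for the actual data. I would verify that $\vec X^{m+1}=\vec\id+\ttau\,\vec Z$ solves the system with $\lambda=0$, where $\vec Z$ is defined at each node by
\[
\vec Z\cdot\vec\nu_p^m=\mathscr{V}^{m+1},\qquad \langle\mat{P}_{\Gm}\vec Z,\vec\eta^h\rangle^h=\langle\mat{P}_{\Gm}\vec U^{m+1},\vec\eta^h\rangle^h .
\]
I expect this to be the main obstacle, since the compatibility is not obvious with a piecewise-constant $\mat{P}_{\Gm}$. I would argue it from the normal component of \eqref{eq:Xfull2} tested with $\eta\vec\nu_p^m$, combined with \eqref{eq:fdsf3} and \eqref{eq:Xfull1}, which force $\langle\lambda|\vec\nu_p^m|^2,\eta\rangle^h=0$.
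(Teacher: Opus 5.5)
Your treatment of the coupled system \eqref{eqn:fdnv}, \eqref{eqn:fdsf} is the paper's argument: the same test functions, a discrete energy identity with zero data, Korn, the lumped equations, then the inf-sup condition. Your uniqueness proof for \eqref{eqn:Xfull} is also correct, and is a bit more careful than the paper's about where $\vec\nu_p^m(\vec q)\neq\vec 0$ enters.

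The gap is the final assertion $\lambda^{m+1}_\Gamma=0$. You leave it open, and the route you sketch does not close it. Write $\vec Y=(\vec X^{m+1}-\vec\id)/\ttau$. Testing \eqref{eq:Xfull2} with $\eta^h\vec\nu_p^m$ gives $\ipd{\lambda^{m+1}_\Gamma|\vec\nu_p^m|^2,\eta^h}^h_{\Gm}=\ipd{\mat{P}_{\Gm}(\vec U^{m+1}-\vec Y),\eta^h\vec\nu_p^m}^h_{\Gm}$. But \eqref{eq:fdsf3} and \eqref{eq:Xfull1} only tell you that $(\vec Y-\vec U^{m+1})\cdot\vec\nu_p^m=0$ at the vertices. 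At a vertex $\vec q$ the right-hand side equals $(\vec U^{m+1}-\vec Y)(\vec q)\cdot M(\vec q)\,\vec\nu_p^m(\vec q)$. Here $M(\vec q)$ is the $|\sigma_j^m|$-weighted average of the elementwise projections $\mat{P}_{\Gm}|_{\sigma_j^m}$ over the simplices containing $\vec q$. In general $M\vec\nu_p^m$ is not parallel to $\vec\nu_p^m$. For example, take $d=2$ with two edges of normals $\vec\nu^m_1,\vec\nu^m_2$ and weights $w_1\neq w_2$. Then $M\vec\nu_p^m=w_1w_2(1-\vec\nu^m_1\cdot\vec\nu^m_2)(\vec\nu^m_1+\vec\nu^m_2)$, while $\vec\nu_p^m=w_1\vec\nu^m_1+w_2\vec\nu^m_2$. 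So the tangential part of $\vec Y-\vec U^{m+1}$ is not controlled, which is exactly the compatibility worry you raised. The paper's one-line justification (taking $\vec\eta=\vec\nu_p$ in \eqref{eq:Xfull2}) is the same idea and is equally incomplete as written.

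The missing observation is a nodal identity. Combining \eqref{eq:fdsf3} with the definition \eqref{eq:nuhomegah} of $\vec\nu_p^m$ as the mass-lumped projection of $\vec\nu^m$ gives $\mathscr{V}^{m+1}(\vec q)=\vec U^{m+1}(\vec q)\cdot\vec\nu_p^m(\vec q)$ for all $\vec q\in Q_\Gamma^m$. Indeed, for every $\zeta^h\in V^h(\Gm)$,
\[
\ipd{\vec U^{m+1}\cdot\vec\nu^m,\zeta^h}^h_{\Gm}=\ipd{\vec\nu^m,\vec\pi_1^m[\zeta^h\vec U^{m+1}]}^h_{\Gm}=\ipd{\vec\nu_p^m,\vec\pi_1^m[\zeta^h\vec U^{m+1}]}^h_{\Gm}=\ipd{\vec U^{m+1}\cdot\vec\nu_p^m,\zeta^h}^h_{\Gm}.
\]
So your $\vec Z$ need not be constructed: take $\vec Z=\vec\pi_1^m\vec U^{m+1}$. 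It satisfies the tangential equation with $\lambda=0$ trivially, since the lumped product sees only vertex values. It satisfies the normal equation by the identity above.

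Hence $(\vec X^{m+1},\lambda^{m+1}_\Gamma)=(\vec\id+\ttau\,\vec\pi_1^m\vec U^{m+1},0)$ solves \eqref{eqn:Xfull}. By the uniqueness you already proved, it is the solution. Equivalently, $(\vec Y-\vec\pi_1^m\vec U^{m+1},\lambda^{m+1}_\Gamma)$ solves the homogeneous problem. So the right test function in \eqref{eq:Xfull2} is $\vec Y-\vec\pi_1^m\vec U^{m+1}$, not $\eta^h\vec\nu_p^m$.
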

\begin{proof}
Since \eqref{eqn:fdnv} and \eqref{eqn:fdsf} form a linear system where the number of unknowns equals the number of equations, it suffices to show that the corresponding homogeneous system has only the zero solution. We thus consider the following homogeneous system by finding $(\vec U, P, P_\Gamma, F_\Gamma, \varkappa, \mathscr{V})\in \bU^m(\vec 0)\times\bP^m\times [V^h(\Gm)]^4$ and $P_{\rm sing}\in\bR$ such that
\begin{subequations}
\begin{align}
\label{eq:homonv1}
&\frac{1}{2\,\ttau}\left[\ipD{\rho^m\,\vec U, ~\vec\xi^h} + \ipD{\rho^{m-1}(\Lambda_{m}^{m-1}\vec U), ~\Lambda_m^{m-1}\vec\xi^h}\right]-\ipD{P,~\nabla\cdot\vec\xi^h}+ 2\ipD{\mu^m\,\mat{D}(\vec U),~\mat{D}(\vec\xi^h)}\nn\\
&\qquad\qquad -P_{\rm sing}\ipd{\vec\nu_p^m,~\vec\xi^h}_{\Gm}^h +\mathscr{A}\left(\rho^m[\Lambda_{m-1}^m\vec U^m-\vec W^{m}];\vec U,\vec\xi^h\right)\nn\\
&\qquad\qquad +\frac{\rho_\Gamma}{2\,\ttau}\left[\ipd{\vec U,~\vec\xi^h}_{\Gm}  + \ipd{\Pi_m^{m-1}\vec U,~\Pi_m^{m-1}\vec\xi^h}_{\Gamma^{m-1}}\right]- \ipd{P_\Gamma,~\nabs\cdot[\vec\pi_1^m(\vec\xi^h)]}_{\Gm}\nn\\
&\qquad\qquad   +2\mu_\Gamma\ipd{\mat{D}_s^m(\vec U),~\mat{D}_s^m(\vec\xi^h)}_{\Gm} = \alpha \ipd{F_\Gamma\vec\nu^m,~\vec\xi^h}_{\Gm}^h; \\[0.5em]
\label{eq:homonv2}
&\ipD{\nabla\cdot\vec U,~q^h} = 0;\qquad\mbox{with}\quad \ipd{\vec\nu_p^m,~\vec U}_{\Gm}^h=0;\\[0.5em]
\label{eq:homonv3}
&\ipd{\nabs\cdot\vec U,~q^h_{\Gamma}}_{\Gamma^m}=0;\\[0.5em]
\label{eq:homosf1}
&\ipd{F_\Gamma,~\varphi^h}_{{\Gm}}^h-\ipd{\nabs\varkappa,~\nabs\varphi^h}_{\Gm} + \ipd{\mathcal{W}^m\,\varkappa,~\varphi^h}_{\Gm}=\frac{1}{2}\ipd{(\Pi_{m-1}^m\varkappa^m)^2\,\varkappa,~\varphi^h}_{\Gm};\\[0.5em]
\label{eq:homosf2}
&\frac{1}{2\,\ttau}\Bigl[\ipd{\varkappa, ~\chi^h}_{\Gamma^m} + \ipd{\Pi_{m}^{m-1}\varkappa,~\Pi_{m}^{m-1}\chi^h}_{\Gamma^{m-1}}\Bigr]-\mathscr{A}_{\Gamma^m}(\mathscr{\vv V}^{m}; \varkappa,\chi^h)+\ipd{\nabs\mathscr{V}, \nabs\chi^h}_{\Gamma^m}\nn\\
&\qquad\quad-\ipd{\mathscr{ V}\,\mathcal{W}^m,~\chi^h}_{\Gamma^m}  + \frac{1}{2}\ipd{\mathscr{V},~(\Pi_{m-1}^{m}\varkappa^m)^2\,\chi^h}_{\Gamma^m} =0;\\[0.5em]
&\ipd{\mathscr{V},~\zeta^h}_{\Gm}^h-\ipd{\vec U\cdot\vec\nu^m,~\zeta^h}_{\Gm}^h=0;
\label{eq:homosf3}
\end{align}
\end{subequations}
for $(\vec\xi^h, q^h, q_\Gamma^h, \varphi^h, \chi^h, \zeta^h)\in\bU^m(\vec 0)\times \bP^m\times[V^h(\Gm)]^4$.

We then choose $\vec\xi^h = \vec U$ in \eqref{eq:homonv1}, $q^h = P$ in \eqref{eq:homonv2}, $q_\Gamma^h = P_\Gamma$ in \eqref{eq:homonv3}, $\varphi^h = \mathscr{V}$ in \eqref{eq:homosf1}, $\chi^h = \varkappa$ in \eqref{eq:homosf2} and $\zeta^h = F_\Gamma$ in \eqref{eq:homosf3}. Combining these equations, we obtain
\begin{align}
&\frac{1}{2\,\ttau}\left[\ipD{\rho^m\,\vec U, ~\vec U} + \ipD{\rho^{m-1}(\Lambda_{m}^{m-1}\vec U), ~\Lambda_m^{m-1}\vec U}\right]+2\ipD{\mu^m\,\mat{D}(\vec U),~\mat{D}(\vec U)}\nonumber\\
&+  \frac{\rho_\Gamma}{2\,\ttau}\left[\ipd{\vec U, \vec U}_{\Gm}^h + \ipd{\Pi_m^{m-1}\vec U,\Pi_m^{m-1}\vec U}_{\Gamma^{m-1}}^h\right] + 2\mu_\Gamma\ipd{\mat{D}_s^m(\vec U), \mat{D}_s^m(\vec U)}_{\Gm}\nn\\
&+ \frac{\alpha}{2\,\ttau}\left[\ipd{\varkappa,~\varkappa}_{\Gm} + \ipd{\Pi_m^{m-1}\varkappa, \Pi_m^{m-1}\varkappa}_{\Gamma^{m-1}}\right]=0.
\end{align}
Then, by $\mathscr{H}^{d-1}(\partial_1\Omega)>0$ with Korn's inequality and $\alpha>0$, we obtain that $\vec U=\vec 0$ and $\varkappa=0$. Inserting    $\vec U=\vec 0$ and $\varkappa=0$ into \eqref{eq:homosf1} and \eqref{eq:homosf3} leads to $F_\Gamma = 0$ and $\mathscr{V}=0$.

Finally, using $\vec U=\vec 0$ and $F_\Gamma=0$ in \eqref{eq:homonv1} and on recalling the inf-sup condition \eqref{eq:infsup}, we obtain that $P=0\in\bP^m$, $P_\Gamma=0\in V^h(\Gm)$ and $P_{\rm sing}=0$. Therefore, we have proven that the linear systems \eqref{eqn:fdnv} and \eqref{eqn:fdsf} have a unique solution.

We next consider the solvability of the linear system \eqref{eqn:Xfull}. Similarly, we consider its homogeneous variant: find $\vec X\in [V^h(\Gm)]^d$ and $\lambda_\Gamma\in V^h(\Gm)$ such that
\begin{align}
 \label{eq:homoXfull1}
&\ipd{\vec X\cdot\vec\nu_p^m,~\eta^h}_{\Gm}^h = 0\qquad\forall\eta^h\in V^h(\Gm),\\
\label{eq:homoXfull2}
&\ipd{\mat{P}_{\Gm}\vec X,~\vec\eta^h}_{\Gm}^h + \ipd{\lambda_\Gamma\,\vec\nu_p^m,~\vec\eta^h}_{\Gm}^h = 0\qquad\forall\vec\eta^h\in [V^h(\Gm)]^d.
\end{align}
Choosing $\eta^h = \lambda_\Gamma$ in \eqref{eq:homoXfull1} and $\vec\eta^h =\vec X$ in \eqref{eq:homoXfull2}, we recall the mass-lumped inner product \eqref{eq:tprule} to obtain
\begin{equation}
0=\ipd{\mat{P}_{\Gm}\vec X,~\vec X}_{\Gm}^h = \ipd{\mat{P}_{\Gm}\vec X,~\mat{P}_{\Gm}\vec X}_{\Gm}^h.\label{eq:homoX1}
\end{equation}
We assume $\pi^m$ is the standard interpolation operator $\pi^m: [C(\Gm)]\to V^h(\Gm)$ and then set $\eta^h = \pi^m(\vec X\cdot\vec\nu_p^m)$ in \eqref{eq:homoXfull1} to obtain
\begin{align}
0=\ipd{\vec X\cdot\vec\nu_p^m, ~\vec X\cdot\vec\nu_p^m}_{\Gm}^h = \ipd{\vec X\cdot\vec\nu_p^m,~\pi^m(\vec X\cdot\vec \nu_p^m)}_{\Gm}^h.\label{eq:homoX2}
\end{align}
Combining \eqref{eq:homoX1} and \eqref{eq:homoX2} implies that $\ipd{\vec X, \vec X}_{\Gm}^h = 0$, and thus $\vec X =\vec 0$ as desired.

Inserting $\vec X=\vec 0$ in \eqref{eq:homoXfull2} and setting $\vec\eta^h = \vec\nu_p^m$ yields that
\begin{equation}
\ipd{\lambda_\Gamma\,\vec\nu_p^m,~\vec\nu_p^m}
_{\Gm}^h = 0,
 \end{equation}
which implies that $\lambda_\Gamma=0\in V^h(\Gm)$ on recalling the assumption \eqref{eq:vnormalassump}. Thus, \eqref{eqn:Xfull} has a unique solution
$(\vec X^{m+1}, \lambda^{m+1}_\Gamma)$.
The fact that $\lambda^{m+1}_\Gamma=0\in V^h(\Gm)$ follows from choosing
$\vec\eta = \vec\nu_p$ in \eqref{eq:Xfull2} and the assumption
\eqref{eq:vnormalassump}. 
\end{proof}

We have the following theorem, which shows that the discrete solution satisfies an unconditional energy stability estimate.
\begin{thm}[stability estimate] Let $(\vec U^{m+1}, P^{m+1}, P_\Gamma^{m+1}, P^{m+1}_{\rm sing}, F_\Gamma^{m+1}, \varkappa^{m+1}, \mathscr{V}^{m+1})$ be a solution to the linear system \eqref{eqn:fdnv} and \eqref{eqn:fdsf} for each $m\geq 0$.
In the case $\vec g = \vec 0$, the discrete solution obeys the following energy stability estimate:
\begin{align}\label{eq:dES}
&\mathcal{E}(\rho^m, \vec U^{m+1}, \Gamma^m, \varkappa^{m+1})+ 2\ttau\norm{\sqrt{\mu^m}\mat{D}(\vec U^{m+1})}_0^2+ 2\ttau\,\mu_\Gamma\norm{\mat{D}_{s}^m(\vec U^{m+1})}_{\Gm,0}^2\nn\\
&\quad+ \frac{\rho_+\,\ttau}{2}\ipd{\vec U^m\cdot\vec n_{_{\partial\Omega}},~\vec U^m\cdot\vec U^{m+1}}_{\partial_2\Omega}\leq\mathcal{E}(\rho^{m-1}, \vec U^{m}, \Gamma^{m-1}, \varkappa^{m}),
\end{align}
where we define
\begin{equation*}
\mathcal{E}(\rho, \vec u, \Gamma, \varkappa):= \frac{1}{2}(\rho\vec u,~\vec u) + \frac{1}{2}\ipd{\rho_\Gamma\vec u,~\vec u}_{\Gamma} + \frac{\alpha}{2}\ipd{\varkappa-\bkap,~\varkappa-\bkap}_{\Gamma}.
\end{equation*}
\end{thm}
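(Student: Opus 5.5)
The plan is to mimic the continuous energy argument of Theorem~\ref{thm:weak} at the fully discrete level, with the same test functions used in the uniqueness proof but now for the inhomogeneous system. Concretely, I will choose $\vec\xi^h=\vec U^{m+1}$ in \eqref{eq:fdnv1}, $q^h=P^{m+1}$ in \eqref{eq:fdnv2} together with the constraint $\ipd{\vec U^{m+1},\vec\nu_p^m}_{\Gm}^h=0$ (which removes the $P_{\rm sing}^{m+1}$ term), and $q_\Gamma^h=P_\Gamma^{m+1}$ in \eqref{eq:fdnv3}. On the curvature side I will take $\varphi^h=\mathscr{V}^{m+1}$ in \eqref{eq:fdsf1}, $\chi^h=\varkappa^{m+1}-\bkap$ in \eqref{eq:fdsf2} (admissible since constants lie in $V^h(\Gm)$), and $\zeta^h=F_\Gamma^{m+1}$ in \eqref{eq:fdsf3}. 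The skew-symmetric terms $\mathscr{A}(\cdot;\vec U^{m+1},\vec U^{m+1})$ and $\mathscr{A}_{\Gm}(\mathscr{\vv V}^m;\varkappa^{m+1}-\bkap,\varkappa^{m+1}-\bkap)$ vanish identically, and both pressure terms cancel.

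The coupling terms then cancel exactly. By \eqref{eq:fdsf3} with $\zeta^h=F_\Gamma^{m+1}$, the forcing term $\alpha\ipd{F_\Gamma^{m+1}\vec\nu^m,\vec U^{m+1}}_{\Gm}^h$ equals $\alpha\ipd{F_\Gamma^{m+1},\mathscr{V}^{m+1}}_{\Gm}^h$. Here I will use that $F_\Gamma^{m+1}$ is piecewise linear, so the mass-lumped product only samples vertex values. From \eqref{eq:fdsf1} this equals $\alpha$ times
\[
\ipd{\nabs\varkappa^{m+1},\nabs\mathscr{V}^{m+1}}_{\Gm}-\ipd{\mathcal{W}^m[\varkappa^{m+1}-\bkap],\mathscr{V}^{m+1}}_{\Gm}+\tfrac12\ipd{[\Pi^m_{m-1}\varkappa^m-\bkap]\Pi^m_{m-1}\varkappa^m[\varkappa^{m+1}-\bkap],\mathscr{V}^{m+1}}_{\Gm}.
\]
These three terms appear with the opposite sign in \eqref{eq:fdsf2} tested with $\varkappa^{m+1}-\bkap$, because $\nabs(\varkappa^{m+1}-\bkap)=\nabs\varkappa^{m+1}$. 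The lagging of the cubic term and of $\mathcal{W}^m$ was designed precisely so that these terms match symmetrically.

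What remains are the time-difference terms, where the desired inequality comes from a Cauchy--Schwarz/Young step. For the bulk, I will use that the right side $\frac1{\ttau}(\rho^{m-1}\vec U^m,\vec U^{m+1})$ must be rewritten on the pulled-back mesh. The key identity is that $\Lambda_m^{m-1}$ maps $\mathscr{T}^m$ onto $\mathscr{T}^{m-1}$, with $\rho^{m-1}$ consistent with $\Gamma^{m-1}$ in the fitted setting. I then write
\[
(\rho^{m-1}\vec U^m,\Lambda_m^{m-1}\vec U^{m+1})\le\tfrac12(\rho^{m-1}\vec U^m,\vec U^m)+\tfrac12(\rho^{m-1}\Lambda_m^{m-1}\vec U^{m+1},\Lambda_m^{m-1}\vec U^{m+1}).
\]
Combined with the left side, this yields $\frac{1}{2\ttau}[(\rho^m\vec U^{m+1},\vec U^{m+1})-(\rho^{m-1}\vec U^m,\vec U^m)]$ plus a nonnegative remainder. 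I expect the main obstacle to be this step: interpreting $\ipD{\rho^{m-1}\vec U^m,\vec\xi^h}$ in \eqref{eq:fdnv1} as an integral over the old configuration paired with $\Lambda_m^{m-1}\vec\xi^h$, and checking that the pushforward/pullback compositions are consistent. The surface terms with $\Pi_m^{m-1}$ on $\Gamma^{m-1}$ and the curvature terms in \eqref{eq:fdsf2} are treated identically, giving $\frac{\rho_\Gamma}{2\ttau}[\norm{\vec U^{m+1}}^2_{\Gm,0}-\norm{\vec U^m}^2_{\Gamma^{m-1},0}]$ and $\frac{\alpha}{2\ttau}[\norm{\varkappa^{m+1}-\bkap}^2_{\Gm,0}-\norm{\varkappa^m-\bkap}^2_{\Gamma^{m-1},0}]$.

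Finally, I will collect the viscous terms $2\norm{\sqrt{\mu^m}\mat{D}(\vec U^{m+1})}_0^2$ and $2\mu_\Gamma\norm{\mat{D}_s^m(\vec U^{m+1})}^2_{\Gm,0}$, move the explicit boundary term $\frac{\rho_+}{2}\ipd{\vec U^m\cdot\vec n_{\partial\Omega},\vec U^m\cdot\vec U^{m+1}}_{\partial_2\Omega}$ to the left, and multiply by $\ttau$. This gives \eqref{eq:dES}.
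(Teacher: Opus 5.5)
Your proposal is correct and is essentially the paper's proof: it uses the same test functions (the paper takes $\ttau\vec U^{m+1}$ and $\ttau(\varkappa^{m+1}-\bkap)$ directly instead of multiplying by $\ttau$ at the end), it cancels the pressure, skew-symmetric and $F_\Gamma^{m+1}$--$\mathscr{V}^{m+1}$ coupling terms in the same way, and it applies the same inequality $\vec a\cdot\vec b-\frac12|\vec b|^2\le\frac12|\vec a|^2$ to the bulk, surface-velocity and curvature time-difference lines. On your anticipated obstacle, no identity is available or needed, since in general $\ipD{\rho^{m-1}\vec U^m,\vec U^{m+1}}\neq\ipD{\rho^{m-1}\vec U^m,\Lambda_m^{m-1}\vec U^{m+1}}$; the paper's proof simply takes the bulk right-hand side of \eqref{eq:fdnv1} to be $\frac{1}{\ttau}\ipD{\rho^{m-1}\vec U^m,\Lambda_m^{m-1}\vec\xi^h}$ (see \eqref{eq:stab1}), which is the form the ALE discretization intends and exactly the reading you propose.
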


\begin{proof}
We set $\xi^h = \ttau\vec U^{m+1}$ in \eqref{eq:fdnv1}, $q = P^{m+1}$ in \eqref{eq:fdnv2}, $q_\Gamma = P_\Gamma^{m+1}$ in \eqref{eq:fdnv3}, $\varphi^h = \mathscr{V}^{m+1}$ in \eqref{eq:fdsf1}, $\chi^h = \ttau(\varkappa^{m+1}-\bkap)$ in \eqref{eq:fdsf2} and $\zeta^h = F_\Gamma^{m+1}$ in \eqref{eq:fdsf3}. Combining these equations then leads to
\begin{align}
&\frac{1}{2}\left[\ipD{\rho^m \vec U^{m+1}, ~\vec U^{m+1}} + \ipD{\rho^{m-1}\,\Lambda_m^{m-1}\vec U^{m+1},~\Lambda_m^{m-1}\vec U^{m+1}}\right] \nn\\
& + \frac{\rho_\Gamma}{2}\left[\ipd{\vec U^{m+1}, \vec U^{m+1}}_{\Gm}^h + \ipd{\Pi_m^{m-1}\vec U^{m+1},\Pi_m^{m-1}\vec U^{m+1}}_{\Gamma^{m-1}}^h\right] \nn\\
& + 2\ttau\ipD{\mu^m\mat{D}(\vec U^{m+1}),~\mat{D}(\vec U^{m+1})}+ 2\mu_\Gamma\ttau\ipd{\mat{D}_s^m(\vec U^{m+1}), \mat{D}_s^m(\vec U^{m+1})}_{\Gm}\nn\\
&+ \frac{\alpha}{2}\left[\ipd{\varkappa^{m+1}-\bkap,~\varkappa^{m+1}-\bkap}_{\Gm} + \ipd{\Pi_m^{m-1}\varkappa^{m+1}-\bkap, \Pi_m^{m-1}\varkappa^{m+1}-\bkap}_{\Gamma^{m-1}}\right] \nn\\
&=\ipD{\rho^{m-1}\vec U^m,~\Lambda_m^{m-1}\vec U^{m+1}} + \rho_\Gamma\ipd{\vec U^m,~\Pi_m^{m-1}\vec U^{m+1}}_{\Gamma^{m-1}}\nn\\
&\qquad   - \frac{\ttau}{2}\ipd{\rho_+\vec U^m\cdot\vec n,~\vec U^m\cdot\vec U^{m+1}}_{\partial_2\Omega} + \alpha\ipd{\varkappa^m-\bkap,~\Pi_m^{m-1}\varkappa^{m+1}-\bkap}_{\Gamma^{m-1}}.\label{eq:stab1}
\end{align}

We can then rewrite \eqref{eq:stab1} into
\begin{align}
&\frac{1}{2}\left[\ipD{\rho^m\vec U^{m+1}, ~\vec U^{m+1}}+ \rho_\Gamma\ipd{\vec U^{m+1},\vec U^{m+1}}_{\Gm}^h + \alpha\ipd{\varkappa^{m+1}-\bkap,~\varkappa^{m+1}-\bkap}_{\Gm}\right]\nn\\
&+2\ttau\norm{\sqrt{\mu^m}\mat{D}(\vec U^{m+1})}_0^2 + 2\ttau\,\mu_\Gamma\norm{\mat{D}_s^m(\vec U^{m+1})}_{\Gamma^m,0}^2+\frac{\rho_+\ttau}{2}\ipd{\vec U^m\cdot\vec n,~\vec U^m\cdot\vec U^{m+1}}_{\partial_2\Omega}\nn\\
&\leq \ipD{\rho^{m-1}\vec U^m,~\Lambda_m^{m-1}\vec U^{m+1}} - \frac{1}{2}\ipD{\rho^{m-1}\,\Lambda_m^{m-1}\vec U^{m+1},~\Lambda_m^{m-1}\vec U^{m+1}}\nn\\
&\quad  + \rho_\Gamma\left[\ipd{\vec U^m,~\Pi_m^{m-1}\vec U^{m+1}}_{\Gamma^{m-1}}^h - \frac{1}{2}\ipd{\Pi_m^{m-1}\vec U^{m+1},\Pi_m^{m-1}\vec U^{m+1}}_{\Gamma^{m-1}}^h\right]\nn\\
&\quad+ \alpha\left[\ipd{\varkappa^m-\bkap,~\Pi_m^{m-1}\varkappa^{m+1}-\bkap}_{\Gamma^{m-1}} - \frac{1}{2}\ipd{\Pi_m^{m-1}\varkappa^{m+1}-\bkap,~\Pi_m^{m-1}\varkappa^{m+1}-\bkap}_{\Gamma^{m-1}}\right]. \label{eqn:stab2}
\end{align}
We now apply the inequality $\vec a\cdot\vec b- \frac{1}{2} |\vec b|^2 \leq \frac{1}{2}|\vec a|^2$ to the right-hand side of \eqref{eqn:stab2} at each line, e.g.,
\begin{align}
&\ipD{\rho^{m-1}\vec U^m,~\Lambda_m^{m-1}\vec U^{m+1}} - \frac{1}{2}\ipD{\rho^{m-1}\,\Lambda_m^{m-1}\vec U^{m+1},~\Lambda_m^{m-1}\vec U^{m+1}}\leq \frac{1}{2}\ipD{\rho^{m-1}\vec U^m, \vec U^m}.\nn
\end{align}
This gives rise to \eqref{eq:dES} straightforwardly.
\end{proof}

\begin{rem}
It is worth noting that $\vec X^{m+1}$ and $\Gamma^{m+1}$ do not enter the stability estimate \eqref{eq:dES}, and so the estimate is independent of the update step \eqref{eqn:Xfull}. Of course, the interface must be advanced according to \eqref{eqn:Xfull} to recover a consistent approximation of the geometric evolution. Otherwise one may obtain an evolution that is energetically stable but geometrically incorrect.
\end{rem}

\section{Numerical results from the ALE approach}\label{sec:num}

\subsection{Solution strategies}

The linear system \eqref{eqn:fdnv} and \eqref{eqn:fdsf} from our ALE approximations can be written in matrix form as
\begin{equation}
\begin{pmatrix}
 \mat{\vec B}_\Omega^m & \mat{\vec{\mathcal{C}}}^m & 0 & 0 & -\alpha\,\Nbulk^m \\
 (\mat{\vec {\mathcal{C}}}^m)^T & 0 & 0 & 0 & 0 \\
(\Nbulk^m)^T & 0 & 0 & -\mat{M}_\Gamma^m & 0 \\
0 & 0 & \mat{B}_\Gamma^m & \mat{A}^m_\Gamma  & 0\\
0 & 0 &  - \mat A^m_\Gamma & 0 & \mat M_\Gamma^m  
\end{pmatrix} 
\begin{pmatrix} \vec U^{m+1} \\  \tilde{P}^{m+1} \\ \varkappa^{m+1} \\ 
\mathscr{V}^{m+1} \\ F^{m+1}_\Gamma \end{pmatrix}
=
\begin{pmatrix} \vec b^m \\ 0 \\ 0 \\ b_\Gamma^m
\\
c_\Gamma^m 
\end{pmatrix} \,,
\label{eq:lin2}
\end{equation}
where $\tilde{P}^{m+1}=(P^{m+1}, P_\Gamma^{m+1}, P_{\rm sing}^{m+1})$, and with a slight abuse of notation, we denote by $(\vec U^{m+1}, P^{m+1}, \varkappa^{m+1},\mathscr{V}^{m+1}, F_\Gamma^{m+1})$ the coefficients of these finite element functions with respect to the standard bases of the corresponding finite element spaces.

For the solution of (\ref{eq:lin2}), a Schur complement approach similar to
\cite{BGN15stable, BGN16sfluidic} can be used.
In particular, the Schur approach for eliminating
$(\mathscr{V}^{m+1}, F^{m+1}_\Gamma, \varkappa^{m+1})$ from
(\ref{eq:lin2}) can be obtained as follows. Let
\begin{equation*} 
\mat{\Theta}_\Gamma^m:= \begin{pmatrix}
0 & -\mat M_\Gamma^m & 0 \\
 \mat{B}^m_\Gamma & \mat{A}_\Gamma^m  & 0\\
 - \mat{A}_\Gamma^m & 0 & \mat M^m_\Gamma  
\end{pmatrix} \,.
\end{equation*}
Then, (\ref{eq:lin2}) can be reduced to
\begin{subequations}
\begin{align} \label{eq:schur2}
&
\begin{pmatrix}
\mat {\vec B}^m_\Omega + \alpha\,\mat {\vec T}^m_\Omega
& \mat{\vec {\mathcal{C}}}^m \\
(\mat{\vec {\mathcal{C}}}^m)^T & 0 
\end{pmatrix}
\begin{pmatrix}
\vec U^{m+1} \\ \tilde{P}^{m+1} 
\end{pmatrix}
= \begin{pmatrix}
\vec b^m + \alpha\,\vec c^m \\
0
\end{pmatrix}
\end{align}
and
\begin{equation} \label{eq:schurb2}
\begin{pmatrix}
\varkappa^{m+1} \\ \mathscr{V}^{m+1} \\ F^{m+1}_\Gamma 
\end{pmatrix}
 = (\Theta^m_\Gamma)^{-1}\,
\begin{pmatrix}
-(\Nbulk^m)^T\,\vec U^{m+1} \\ 
b_\Gamma^m \\
c_\Gamma^m 
\end{pmatrix}
\,,
\end{equation}
\end{subequations}
where we have used the definitions
\begin{equation*}
\mat{\vec T}_\Omega^m = (0\ 0\ \Nbulk^m)\,(\Theta^m_\Gamma)^{-1}\,
{ \begin{pmatrix} (\Nbulk^m)^T \\ 0 \\ 0 \end{pmatrix}},\qquad \vec c^m  = (0\ 0\ \Nbulk^m)\,(\Theta^m_\Gamma)^{-1}\,
 \begin{pmatrix}
0 \\ 
b_\Gamma^m \\
c_\Gamma^m
\end{pmatrix}.
\end{equation*}
The system \eqref{eq:schur2} can be solved with a preconditioned Krylov subspace solver, while the system \eqref{eq:schurb2} can be solved with the help of a sparse LU factorization. In the second stage, the resulting linear system for \eqref{eqn:Xfull} can also be solved with a sparse LU factorization.

Unless stated otherwise, we always choose $\bar{\varkappa}=0$, $\partial_1\Omega=\partial\Omega$ with $\vec g=\vec 0$, $\vec u_0=\vec 0$. In addition, we employ the $\mbox{P2-(P1+P0)}$ element pair, as defined in \eqref{eq:P2P1P0}. For the initial discrete curvature $\varkappa^0$, we solve for it using the BGN method with a zero normal velocity and fixed boundary. That is, we find $(\delta\vec  Y^0, \kappa^0)\in [V^h(\Gamma_Y)]^d\times V^h(\Gamma_Y)$ such that
\begin{subequations}\label{eqn:ic}
	\begin{align}
		&\ipd{\delta \vec Y^0\cdot\vec\nu_Y, \xi^h}_{\Gamma_Y}^h =0\qquad\forall\xi^h\in V^h(\Gamma_Y),\\
		&\ipd{\kappa^0\,\vec\nu_Y,~\vec\eta^h}_{\Gamma_Y}^h + \ipd{\nabs(\vec\id+\delta\vec  Y^0),~\nabs\vec\eta^h}_{\Gamma_Y}=0\qquad\forall\vec\eta^h\in [V^h(\Gamma_Y)]^d,
	\end{align}
	\end{subequations}
where $\vec\nu_Y$ is the unit normal of $\Gamma_Y$, which follows \eqref{eq:vG} similarly. We next set $\vec X^0 = \vec\id|_{\Gamma_Y} + \delta\vec Y^0$ with $\Gamma^0 = \vec X^0(\Gamma_Y)$ and $\varkappa^0=\kappa^0$ as the required initial data.

\subsection{Numerical results}

\vspace{0.2cm}
\noindent
{\bf Example 1}: We start with a convergence test for our scheme in the first example. We consider $\Omega=(0,1)\times[0, 1.6]$ and choose
\begin{equation}
\rho_+=10,\quad\rho_-=1,\quad\mu_+=1, \quad\mu_-=0.1,\quad \rho_\Gamma=1,\quad\mu_\Gamma=1,\quad\alpha=1.\label{eq:station2}
\end{equation}
Initially, the fluid interface is given by an ellipse satisfying $\frac{(x-0.5)^2}{0.1^2}+\frac{(y-0.8)^2}{0.4^2}=1$. For two closed curves $\Gamma_1$ and $\Gamma_2$, we measure their difference by the area of the symmetric difference between the region $\Omega_1$ and $\Omega_2$ \cite{Zhao2021energy}
\begin{equation*}
{\rm MD}(\Gamma_1,\Gamma_2) = |(\Omega_1\setminus\Omega_2)\cup(\Omega_2\setminus\Omega_1)| = |\Omega_1| + |\Omega_2|-2 |\Omega_1\cap\Omega_2|,
\end{equation*}
where $\Omega_i$ is the region enclosed by $\Gamma_i$, and $|\Omega|$ stands for the area of the region $\Omega$. We then define the numerical errors
\begin{equation*}
e_{h,\ttau}(t) = {\rm MD }(\Gamma_{h,\ttau}(t), \Gamma_{2\,h, 4\ttau}(t)),
\end{equation*}
where $\Gamma^h_{h,\ttau}(t) = \vec X_{h,\ttau}(\Gamma^m, t)$ for $t\in [t_m, t_{m+1}]$ and $\vec X_{h,\ttau}(\cdot, t)$ is defined via
\begin{equation}
		\vec X_{h,\ttau} (\vec q, t) = \frac{t_{m+1}-t}{\ttau}\vec q + \frac{t-t_m}{\ttau}\vec X^{m+1}(\vec q),\quad\forall\vec q\in Q_\Gamma^m, \quad t\in[t_m, t_{m+1}].\label{eq:linertime}
	\end{equation}

We also introduce the quantities to measure the loss of area and volume as:
\begin{align}
\Delta A_{\infty} &= \max_{m=1,\cdots, M} \bigl||\Gamma^m| - |\Gamma(0)|\bigr|,\nn\\
\Delta V_{\infty} & = \max_{m=1,\cdots, M}|\vol(\Gamma^m) - \vol(\Gamma(0))|.\nn
\end{align}
The numerical errors are reported in Table~\ref{tab:stat2}. Here we observe that as the mesh size and time step are refined, the numerical errors and the loss of surface area $\Delta A_{\infty}$ and enclosed volume $\Delta V_\infty$ become smaller, which verifies the numerical convergence of our proposed method. 

\begin{table}[!htp]
\centering
\def\temptablewidth{0.75\textwidth}
\vspace{-2pt}
\caption{Convergence experiments for a time-dependent solution over the interval $[0,1]$ with the parameters as in \eqref{eq:station2}, where $h_0=\frac{1}{16}$, $\ttau_0=0.01$ with $h = \frac{1}{J_\Gamma}$. }\label{tab:stat2}
{\rule{\temptablewidth}{1pt}}
\begin{tabular}{c|cccc}
$(h,\ttau)$   & $e_{h,\ttau}(t=0.05)$    &  $e_{h,\ttau}(t=0.20)$  &$\Delta A_\infty$   &$\Delta V_\infty$\\ \hline
$(h_0,\ttau_0)$  &-   & - &1.04E-2    &3.40E-3    \\ \hline
$(\frac{h_0}{2},\frac{\ttau_0}{4})$ &1.06E-2    &1.05E-2  &1.31E-2 &1.00E-3 \\ \hline
$(\frac{h_0}{2^2},\frac{\ttau_0}{4^2})$ &2.50E-3  &2.90E-3  &7.80E-3 &2.91E-4 \\ \hline
$(\frac{h_0}{2^3},\frac{\ttau_0}{4^3})$&1.20E-3  &1.11E-3  &2.80E-3 &7.54E-5 \\ \hline
 
\end{tabular}
{\rule{\temptablewidth}{1pt}}
\end{table}

\vspace{0.2cm}
\noindent{\bf Example 2}: We next start with an initial shape in the form of a smooth letter ``C''. We choose the computational domain $\Omega=(0,2)^2$ and fix the parameters
\begin{equation*}
\rho_\pm=1,\quad\mu_\pm=1,\quad \mu_\Gamma=1,\quad\alpha=1,
\end{equation*}
and compare two simulations with $\rho_\Gamma=0$ and $\rho_\Gamma=1$. For the initial computational meshes, we use $J_\Gamma=128$, $J_\Omega=2860, K_\Omega=1451$ and set $\ttau=5\times 10^{-4}$. We also introduce the discrete quantities
\begin{align*}
\Delta V^m = \frac{\vol(\Gamma^m)-\vol(\Gamma^0)}{\vol(\Gamma^0)},\qquad \Delta A^m = \frac{|\Gamma^m|-|\Gamma^0|}{|\Gamma^0|},\qquad m\geq 0.
\end{align*}
The numerical results are presented in Figs.~\ref{fig:cshape0} and~\ref{fig:cshape1}. In the latter case, the two arms of the vesicle oscillate up and down due to inertia effects. This behaviour is absent in the case $\rho_\Gamma=0$, where the evolution is purely dissipative and the vesicle relaxes monotonically towards its equilibrium configuration without noticeable oscillations. In both cases, we observe good preservation of volume and surface area, along with a decay of the discrete energy.

\begin{figure}[!htp]
\centering
\includegraphics[width=0.9\textwidth]{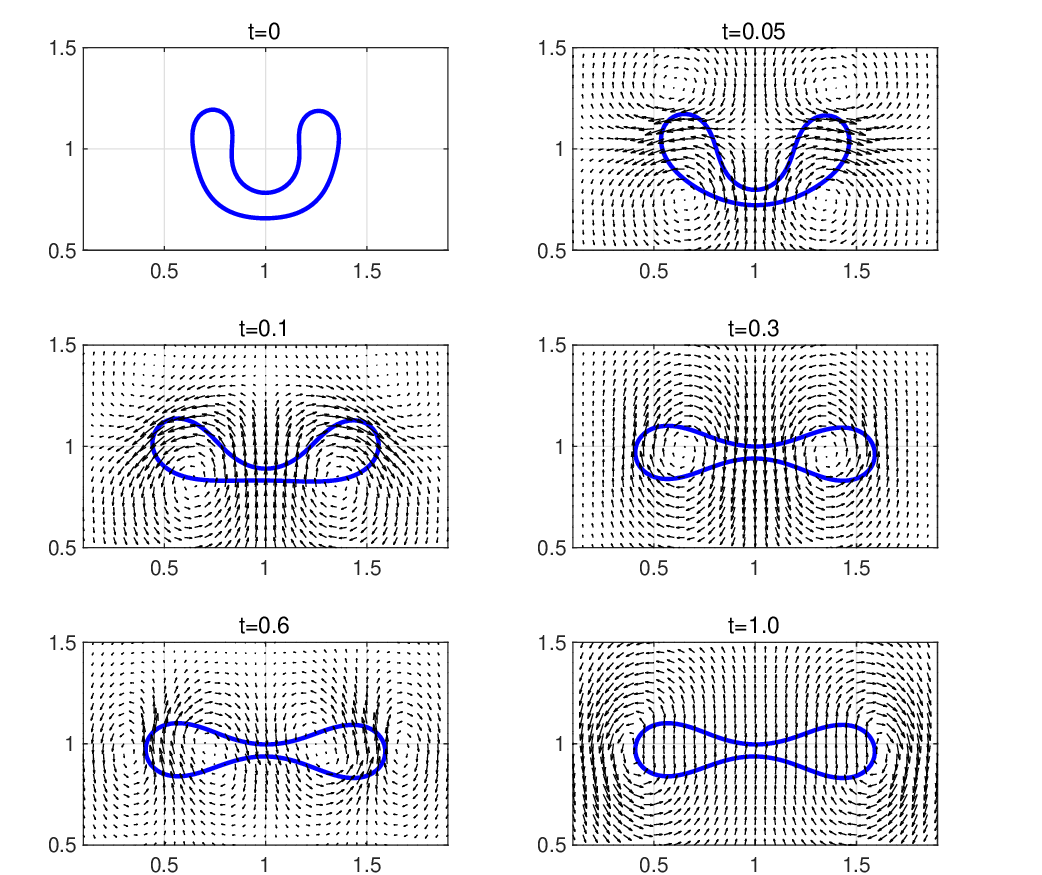}
\includegraphics[width=0.9\textwidth]{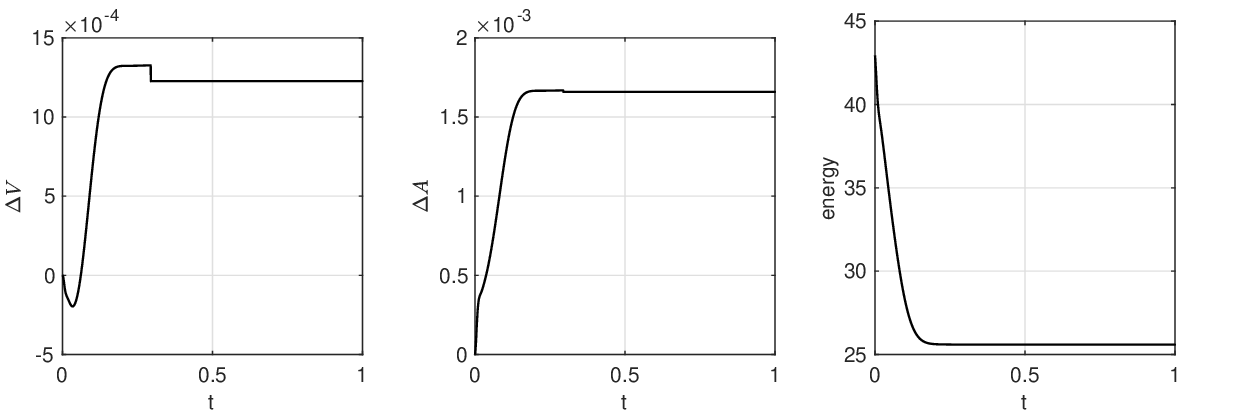}
\caption{Evolution for a smooth letter ``C'' in the case of $\rho_\Gamma=0$, where the plots of $\Gamma^m$ together with the velocity fields are shown at $t=0, 0.05, 0.3, 0.6, 1.0$. Below are the plots of the discrete quantities. }
\label{fig:cshape0}
\end{figure}

\begin{figure}[!htp]
\centering
\includegraphics[width=0.9\textwidth]{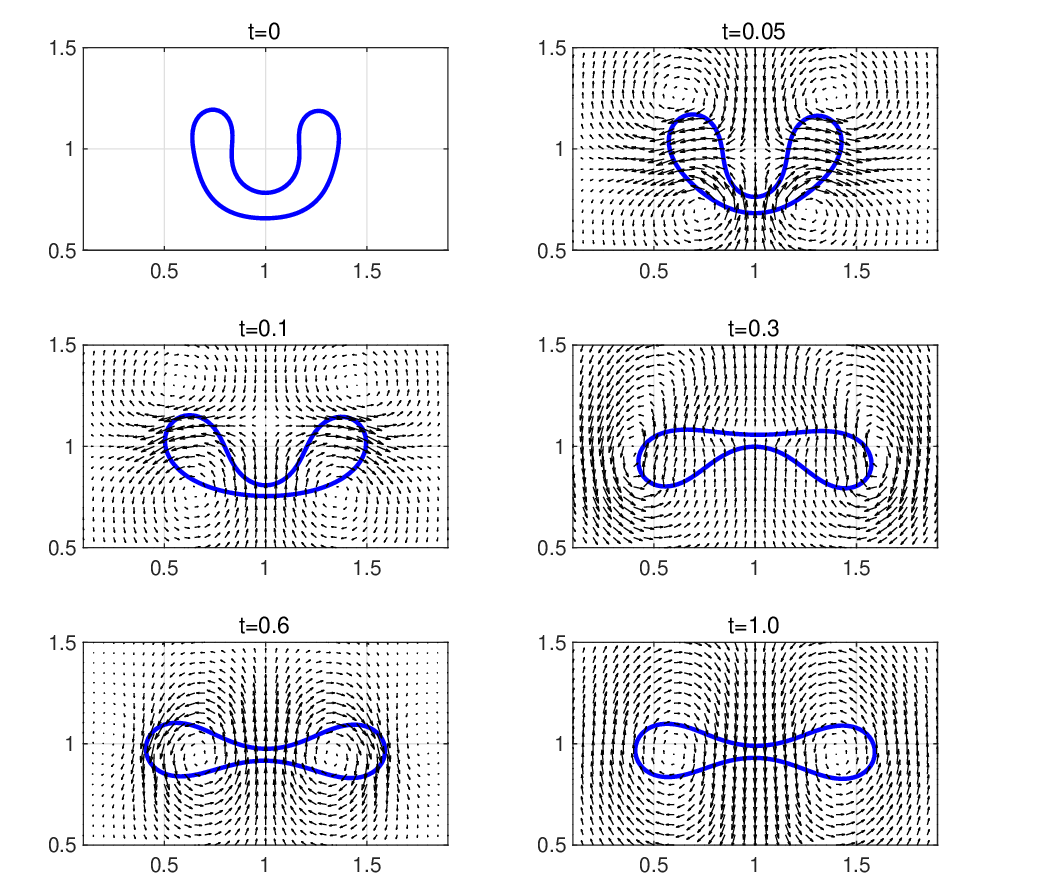}
\includegraphics[width=0.9\textwidth]{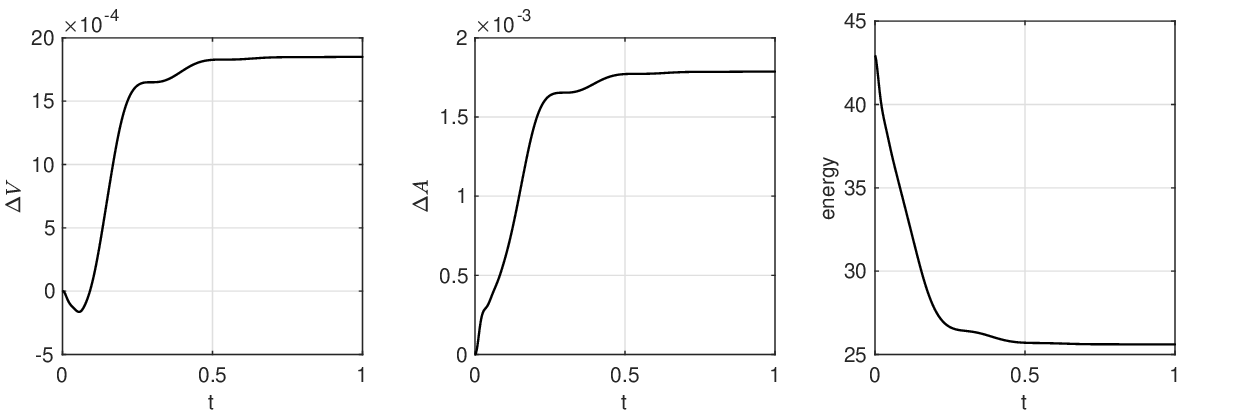}
\caption{Evolution for a smooth letter ``C'' in the case of $\rho_\Gamma=1$, where the plots of $\Gamma^m$ together with the velocity fields are shown at $t=0, 0.05, 0.3, 0.6, 1.0$. Below are the plots of the discrete quantities. }
\label{fig:cshape1}
\end{figure}

\begin{figure}[!htp]
\centering
\includegraphics[width=0.9\textwidth]{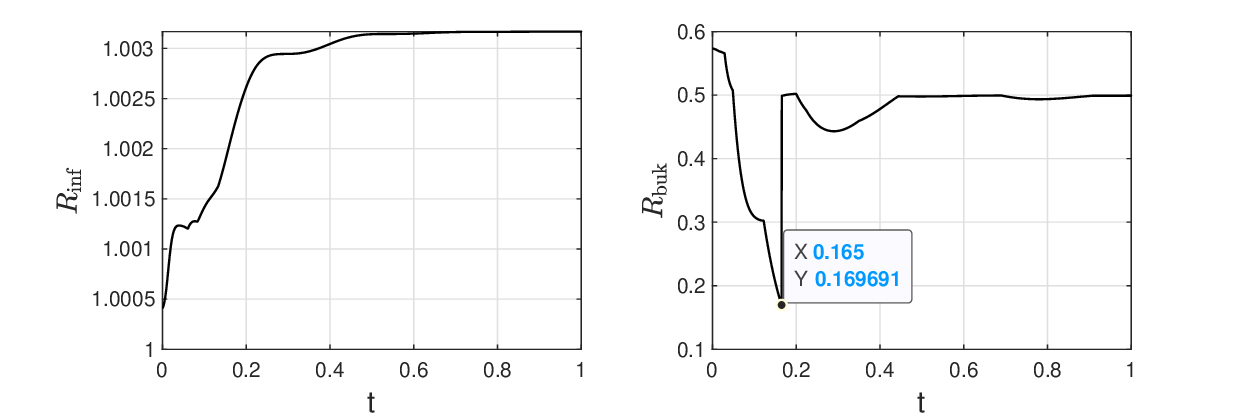}
\caption{The time history of the discrete quantities $R_{\rm inf}$ and $R_{\rm buk}$ in the evolution for a smooth letter ``C'' with $\rho_\Gamma=1$. Here, remeshing of the bulk mesh is employed at time $t=0.165$. }\label{fig:meshQ}
\end{figure}

\begin{figure}[!htp]
\centering
\includegraphics[width=0.9\textwidth]{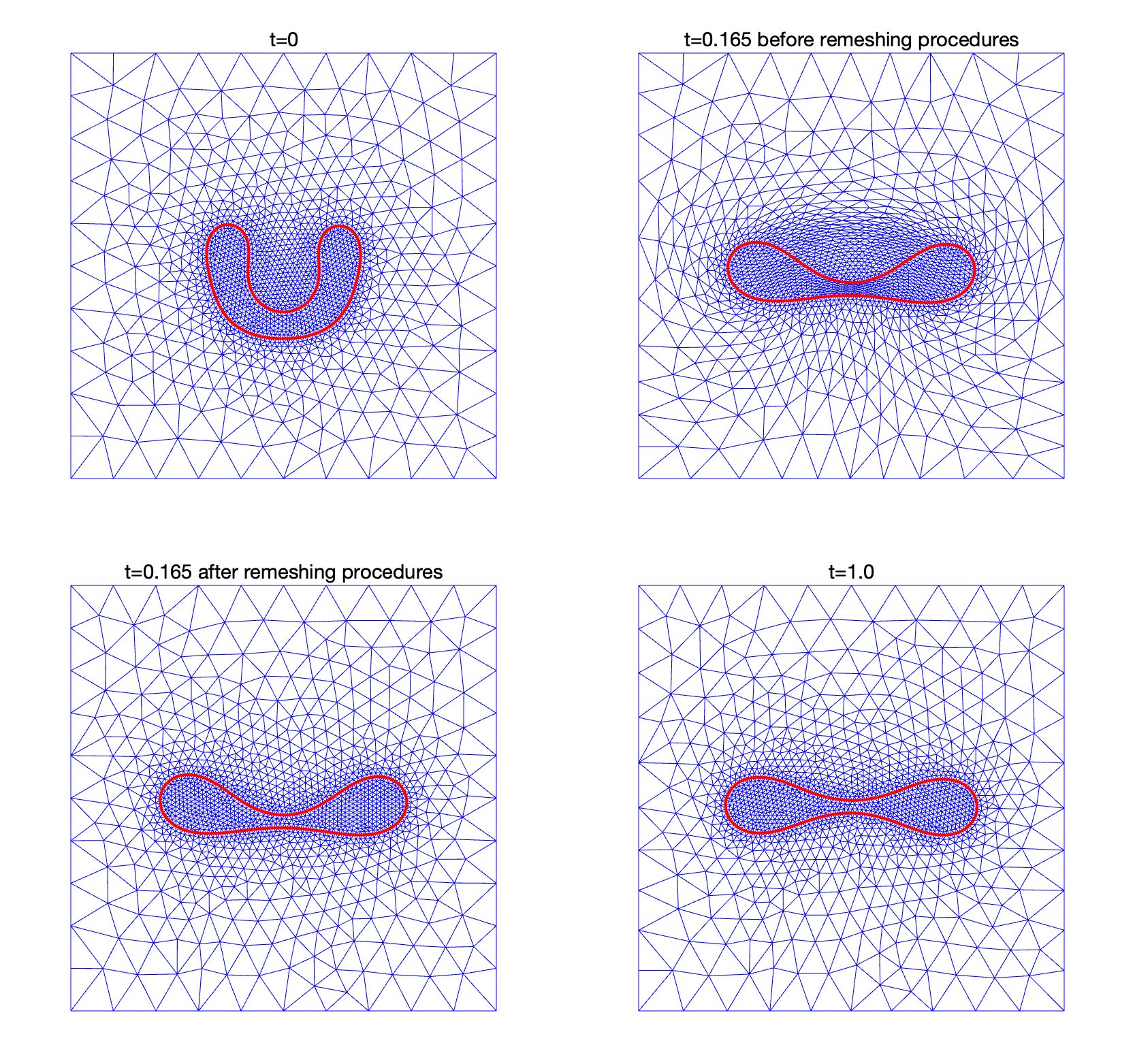}
\caption{Several snapshots of the computational meshes. Upper panel: the old meshes with $J_\Gamma=128, J_\Omega=2860, K_\Omega=1451$. Lower panel: the new meshes with $J_\Gamma=128, J_\Omega=2716, K_\Omega=1379$.}
\label{fig:remeshing}
\end{figure}

To monitor the quality of the computational mesh, we also introduce the discrete quantities
\begin{equation*}
R_{\rm buk}|_{t_m}=\min_{\sigma\in\mathscr{T}^m}\min_{\theta\in \measuredangle{(\sigma)}},\qquad  R_{\rm inf}|_{t_m} := \frac{\max_{j=1}^{J_\Gamma}|\sigma_j^m|}{\min_{j=1}^{J_\Gamma}|\sigma_j^m|},
\end{equation*}
where $\measuredangle{(\sigma)}$ is the set of all the angles of the simplex $\sigma$. Here, $R_{\rm buk}$ and $R_{\rm inf}$ measure the quality of the bulk mesh and interface mesh, respectively.

The time histories of the discrete quantities are shown in Fig.~\ref{fig:meshQ}. We observe that the interface mesh is well preserved, as $R_{\rm inf}$ remains close to 1. This is because the interface evolves according to the local fluid velocity, which is surface incompressible. Moreover, our moving bulk mesh approach generally works smoothly, except when the interface undergoes strong deformations. Here, we observe that $R_{\rm buk}$ gradually decreases at the early stage, indicating a deterioration in the quality of the bulk mesh. To prevent this, we monitor $R_{\rm buk}$ every ten time steps, and if it falls below $\frac{\pi}{18}$, we regenerate the bulk mesh based on the interface mesh $\Gamma^m$. After remeshing, the fluid velocity and bulk mesh velocity are appropriately interpolated onto the new mesh.

As shown in Fig.~\ref{fig:meshQ}, after remeshing, the quality indicator $R_{\rm buk}$ of the bulk mesh is improved immediately and then remains at approximately 0.5 at later times. This indicates that only one remeshing step is required for the bulk mesh in our current example, which demonstrates the efficiency of the approach. We also visualize the computational meshes at several times in Fig.~\ref{fig:remeshing}.

\vspace{0.2cm}
\noindent{\bf Example 3}: In this example, we consider the dynamics of an initial interface in the form of an ellipse with axis lengths of 1 and 2.5 in a shearing flow. In particular, we set the initial interface as $\frac{(x-2)^2}{0.5^2} + \frac{(y-2)^2}{1.25^2}=1$ and consider $\Omega=(0, 4)^2$ with $\partial_1\Omega=(0,4)\times\{0,4\}$ and $\vec g(\vec z) = (z_2, 0)^T$ for $\vec z=(z_1, z_2)^T$. We also use the initial data $\vec u_0(\vec z)=\eta(z_2)\vec e_1$, where $\eta$ is a continuous piecewise linear function with $\eta(0)= -2$, $\eta(4)=0$ with $\eta(s) = 0$ for $0.5<s<3.5$; see \cite{BGN16sfluidic}. We use the parameters
\begin{equation}
\rho_\pm = 1, \quad \rho_\Gamma=1,\quad \mu_+=1,\quad \mu_\Gamma=1, \quad \alpha = 0.05,
\end{equation}
and compare two simulations with $\mu_-=1$ and $\mu_-=10$. For the initial computational mesh, we fix $J_\Gamma=128$, $J_\Omega=4608$, and $K_\Omega=2345$ and set $\ttau = 10^{-3}$. The numerical results are reported in Figs.~\ref{fig:mu1} and~\ref{fig:mu10}. In the case $\mu_-=1$, we observe that the interface rotates into a steady state in which the interfacial fluid continues to circulate along the interface. This motion is commonly referred to as tank-treading. In the case $\mu_-=10$, however, we observe a continuous rotation of the entire vesicle, known as tumbling. These two distinct vesicle dynamics can also be confirmed from the time histories of the discrete quantities. For example, in the first case, the surface fluid kinetic energy tends to level off, whereas in the latter case, it exhibits a periodic behaviour.

\begin{figure}[!htp]
\centering
\includegraphics[width=0.9\textwidth]{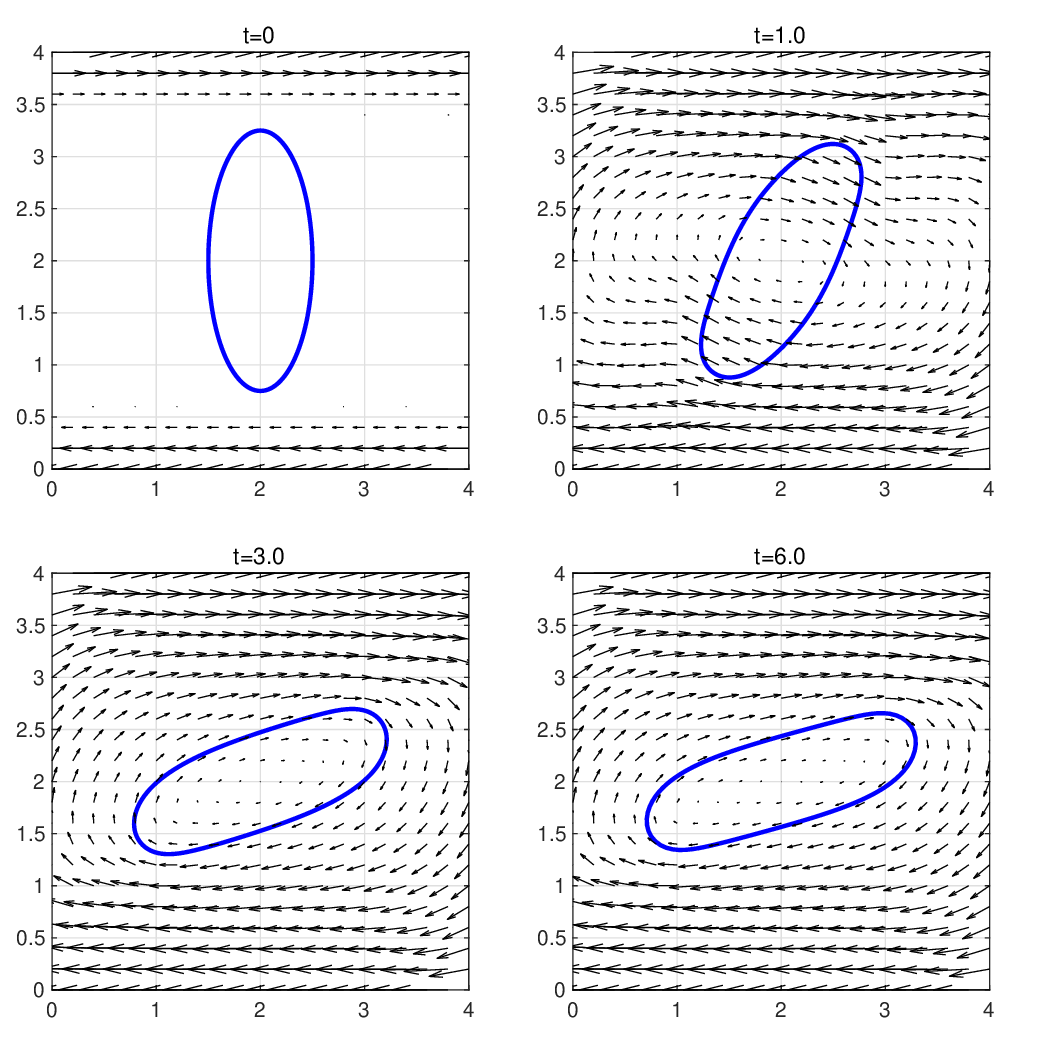}
\includegraphics[width=0.9\textwidth]{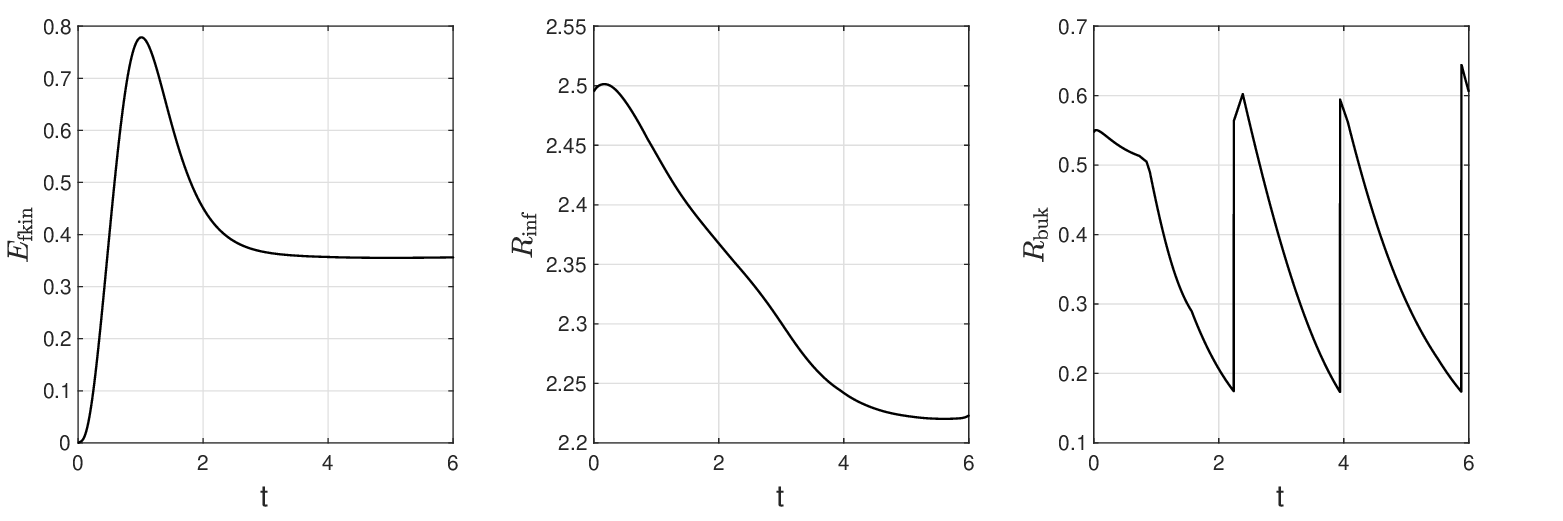}
\caption{The tank treading example in shearing flow for $\mu_-=1$, where we plot $\Gamma^m$ together with velocity fields at $t=0,1,3,6$. Below are the plots of the discrete surface fluid kinetic energy and the mesh quality indicators $R_{\rm inf}$ and $R_{\rm buk}$.}
\label{fig:mu1}
\end{figure}

\begin{figure}[!htp]
\centering
\includegraphics[width=0.9\textwidth]{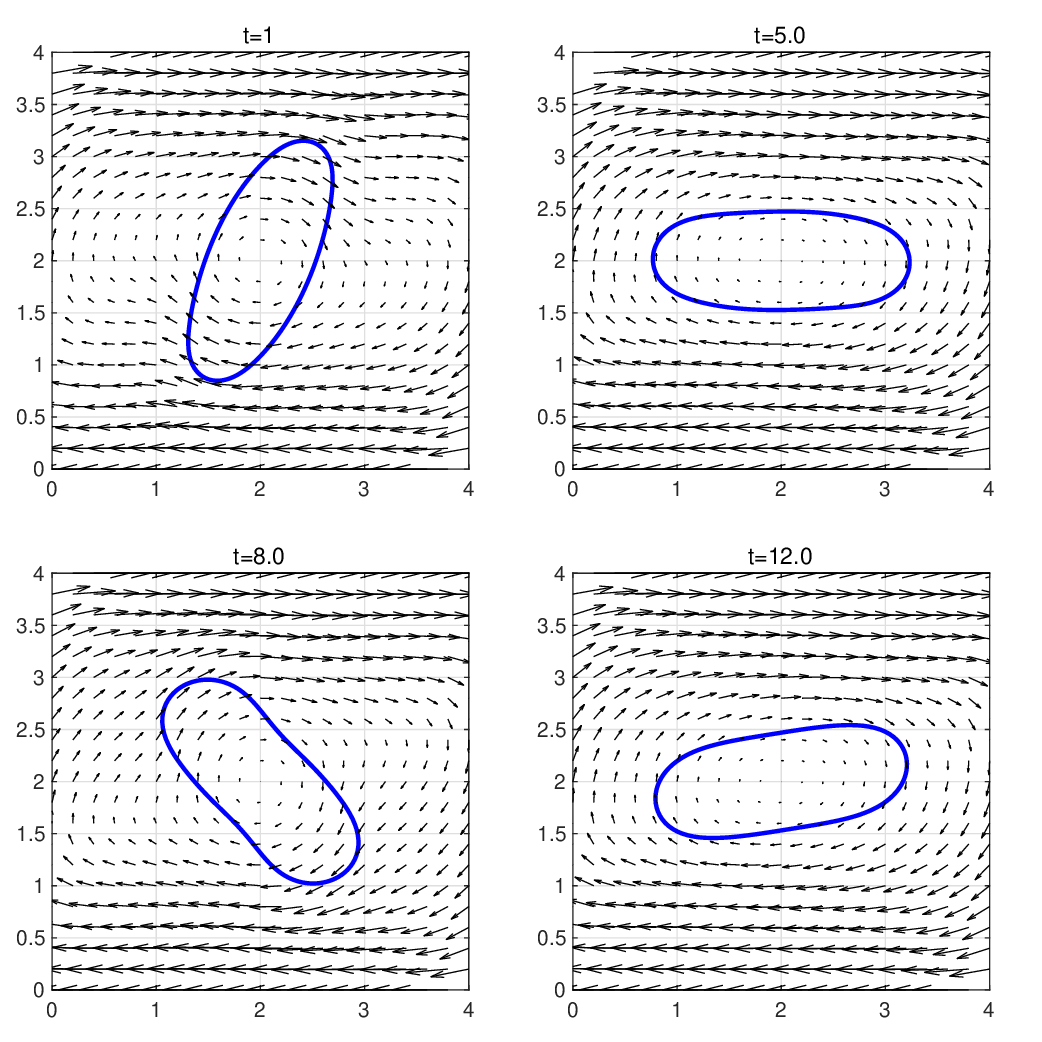}
\includegraphics[width=0.9\textwidth]{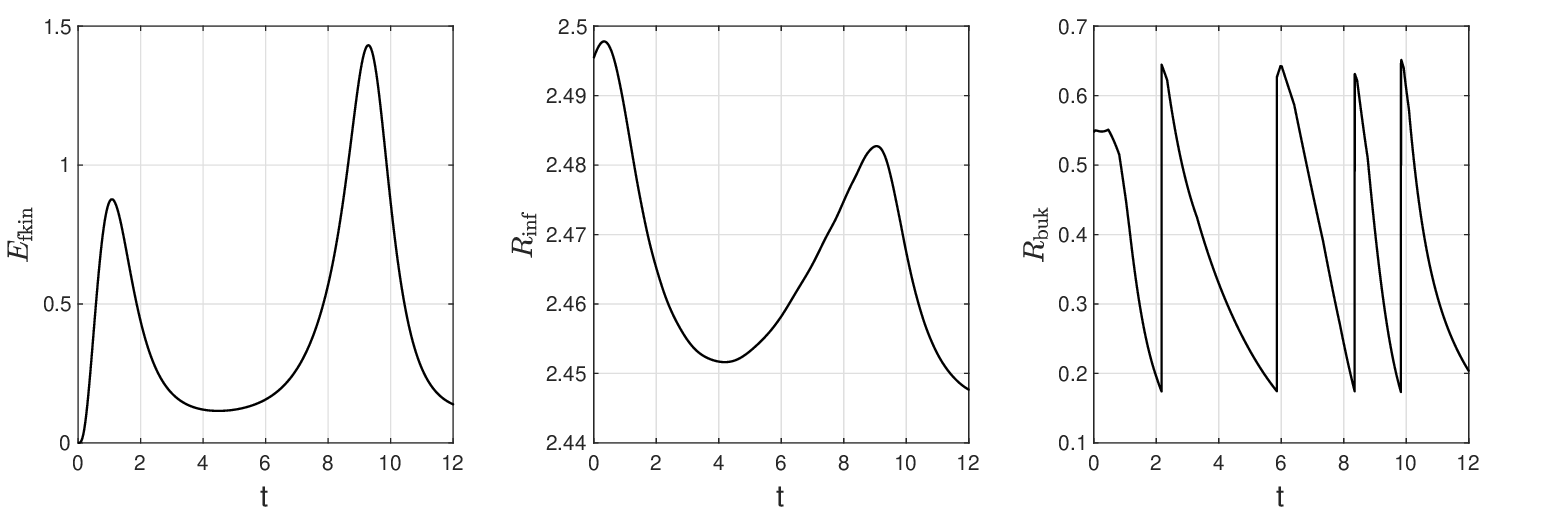}
\caption{The tumbling example in shearing flow for $\mu_-=10$, where we plot $\Gamma^m$ together with velocity fields at $t=0,1,3,6$. Below are the plots of the discrete surface fluid kinetic energy and the mesh quality indicators $R_{\rm inf}$ and $R_{\rm buk}$.}
\label{fig:mu10}
\end{figure}

\section{Unfitted mesh approach}
\label{sec:unf}

\subsection{The unfitted approximations}
It is also possible to consider the approximations in the framework of the unfitted mesh approach. Instead of using the moving mesh strategy described in Section~\ref{sec:dALEmap}, we can alternatively simply fix the bulk mesh velocity $\vec W^m = \vec 0$; hence, $\vec\Phi^m = \vec\id|_{\Omega}$ defined in \eqref{eq:ALEBulk} is the identity map. This allows the surface mesh $\Gamma^m$ to cut the bulk mesh $\mathscr{T}^m$, and so at $t=t_m$ we partition the elements of $\mathscr{T}^m$ into interior, exterior and interface elements as
\begin{equation*}
 \mathscr{T}_{\pm}^m:=\left\{o\in\mathscr{T}^m:\; o\subset\Omega_\pm^m\right\}, \qquad \mathscr{T}_\Gamma^m:=\left\{o\in\mathscr{T}^m:\; o\cap\Gamma^m\neq\emptyset\right\},
\end{equation*}
where $\Omega^m_\pm$ denote the interior and exterior of $\Gm$, as before. Then, we approximate the density $\rho(\cdot, t)$ and viscosity $\mu(\cdot, t)$ via $\rho^m\in S_0^m, \mu^m\in S_0^m$ such that
\begin{equation}
\label{eqn:discreterhovis}
\rho^m|_{o}:=\left\{
\begin{array}{ll}
\rho_-, & \text{if}\ o\in \mathscr{T}_-^m, \vspace{0.15cm}\\
\rho_+, & \text{if}\ o\in\mathscr{T}_+^m,\vspace{0.15cm}\\
\frac{1}{2}(\rho_-+\rho_+), &\text{if}\ o\in \mathscr{T}_{\Gamma}^m,
\end{array}\right.\quad\mbox{and}\quad
\mu^m|_{o}:=\left\{\begin{array}{ll}
\mu_-, &\text{if}\ o\in \mathscr{T}_-^m,\vspace{0.15cm}\\
\mu_+,&\text{if}\ o\in\mathscr{T}_+^m, \vspace{0.15cm}\\
\frac{1}{2}(\mu_-+\mu_+), & \text{if}\ o\in \mathscr{T}_{\Gamma}^m.
\end{array}\right.
\end{equation}

For the unfitted computations, we employ the $\mbox{P2-P1}$ element pair for the discrete velocity and pressure spaces, as defined in \eqref{eq:P2P1}.
For the communication between the bulk and the interface, we use the approaches described in \cite{BGN15stable}, and a bulk mesh adaptation strategy is employed locally in the neighbourhood of the interface (see \cite{BGN15stable}). In addition, for $\mathscr{T}^m$ and the associated finite element spaces $S_k^m(\Omega)$, we introduce the standard interpolation operators $\vec I_k^m: [C(\overline{\Omega})]^d\to [S_k^m(\Omega)]^d$ and the standard projection operator $I_0^m: L^1(\Omega)\to S_0^m(\Omega)$. 

Now, the unfitted finite element method can also be split into two stages. With the same initial data, they are given as follows for each $m\geq 0$.

\noindent
{\bf  At the first stage}, we find $(\vec U^{m+1}, P^{m+1}, P_\Gamma^{m+1}, F_\Gamma^{m+1}, \varkappa^{m+1}, \mathscr{V}^{m+1})\in \bU^m(\vec g)\times\bP^m\times [V^h(\Gm)]^4$ and $P_{\rm sing}^{m+1}\in\bR$ by solving
\begin{subequations}\label{eqn:ufdnv}
\begin{align}
\label{eq:ufdnv1}
&\frac{1}{2\,\ttau}\left[\ipD{\rho^m\,\vec U^{m+1}, ~\vec\xi^h} + \ipD{I_0^m\rho^{m-1}\vec U^{m+1}, ~\vec\xi^h}\right]-\ipD{P^{m+1},~\nabla\cdot\vec\xi^h}\nn\\
&\qquad\quad+ 2\ipD{\mu^m\,\mat{D}(\vec U^{m+1}),~\mat{D}(\vec\xi^h)}-P_{\rm sing}^{m+1}\ipd{\vec\nu_p^m,~\vec\xi^h}_{\Gm}^h +\mathscr{A}\left(\rho^m\,\vec I_2^m\vec U^m;\vec U^{m+1},\vec\xi^h\right) \nn\\
&\qquad\quad+\frac{\rho_\Gamma}{2\,\ttau}\left[\ipd{\vec U^{m+1},~\vec\xi^h}^h_{\Gm}+ \ipd{\Pi_m^{m-1}\vec U^{m+1},~\Pi_m^{m-1}\vec\xi^h}^h_{\Gamma^{m-1}}\right]\nn\\
&\qquad\quad- \ipd{P^{m+1}_\Gamma,~\nabs\cdot[\vec\pi_1^m\vec\xi^h]}_{\Gm}    +2\mu_\Gamma\ipd{\mat{D}_s^m(\vec\pi_1^m\vec U^{m+1}),~\mat{D}_s^m(\vec\pi_1^m\vec\xi^h)}_{\Gm}\nn\\
&\qquad\quad = \frac{1}{\ttau}\ipD{I_0^m\rho^{m-1}\,\vec I_2^m\vec U^m,~\vec\xi^h} + \frac{\rho_\Gamma}{\ttau}\ipd{\vec I_2^m\vec U^m,~\Pi_m^{m-1}\vec\xi^h}_{\Gamma^{m-1}}^h  + \alpha\ipd{F_\Gamma^{m+1}\vec\nu_p^m,~\vec\xi^h}_{\Gm}^h\nn\\
&\qquad\qquad -\frac{1}{2}\ipd{\rho_+\vec U^m\cdot\vec n_{_{\partial\Omega}},~\vec U^m\cdot\vec\xi^h\,}_{\partial_2\Omega}\qquad\forall\vec\xi^h\in\bU^m(\vec 0); \\[0.5em]
\label{eq:ufdnv2}
&\ipD{\nabla\cdot\vec U^{m+1},~q^h} = 0\qquad\forall q^h\in \bP^m,\qquad\mbox{and}\quad \ipd{\vec U^{m+1}, \vec\nu_p^m}_{\Gm}^h=0;\\[0.5em]
&\ipd{\nabs\cdot[\vec\pi_1^m\vec U^{m+1}],~q^h_{\Gamma}}_{\Gamma^m}=0\qquad\forall q^h_\Gamma\in V^h(\Gm);
\label{eq:ufdnv3}
\end{align}
\end{subequations}
together with \eqref{eq:fdsf1}, \eqref{eq:fdsf2} and
\begin{equation}
\ipd{\mathscr{V}^{m+1},~\zeta^h}_{\Gm}^h-\ipd{\vec U^{m+1}\cdot\vec\nu_p^m,~\zeta^h}_{\Gm}^h=0\qquad\forall\zeta^h\in V^h(\Gm).
\label{eq:ufdsf3}
\end{equation}

\noindent
{\bf   At the second stage}, given $\mathscr{V}^{m+1}\in V^h(\Gm)$, we seek $\vec X^{m+1}\in [V^h(\Gm)]^d$, $\lambda^{m+1}_\Gamma\in V^h(\Gm)$ and $\vec U_\Gamma^{m+1}\in[V^h(\Gm)]^d$ such that
\begin{subequations}\label{eqn:uXfull}
 \begin{align}
 \label{eq:uXfull1}
&\ipd{\frac{\vec X^{m+1}-\vec\id}{\ttau}\cdot\vec\nu_p^m,~\eta^h}_{\Gm}^h = \ipd{\mathscr{V}^{m+1},~\eta^h}_{\Gm}^h\qquad\forall\eta^h\in V^h(\Gm),\\
\label{eq:uXfull2}
&\ipd{\vec U_\Gamma^{m+1}, ~\vec\eta^h}_{\Gm}^h = \ipd{\vec U^{m+1}, \vec\eta^h}_{\Gm}\qquad\forall\vec\eta^h\in[V^h(\Gm)]^d,\\
\label{eq:uXfull3}
&\ipd{\mat{P}_{\Gm}\frac{\vec X^{m+1}-\vec\id}{\ttau},~\vec\eta^h}_{\Gm}^h + \ipd{\lambda^{m+1}_\Gamma\,\vec\nu_p^m,~\vec\eta^h}_{\Gm}^h
= \ipd{\mat{P}_{\Gm}\vec U_\Gamma^{m+1},~\vec\eta^h}_{\Gm}^h\quad\forall\vec\eta^h\in[V^h(\Gm)]^d.
 \end{align}
\end{subequations}

While the unfitted finite element discretization of the bulk-surface Navier--Stokes system in \eqref{eqn:ufdnv} is aligned with \cite{BGN16sfluidic,BGN17fluidic}, the approximation of the curvature force $F_\Gamma^{m+1}$ differs and constitutes a key distinction. In contrast to \cite{BGN16sfluidic,BGN17fluidic}, the above fully discrete approximation can be shown to be unconditionally energy-stable. In fact, similarly to the presented fitted ALE approximations in Section~\ref{sec:pro}, we have the following theorems for the properties of the unfitted finite element approximations, and the proofs are omitted for brevity.
\begin{thm}[existence and uniqueness]
Assuming that the $LBB_\Gamma$ inf-sup stability condition \eqref{eq:infsup} holds, then the linear system \eqref{eqn:ufdnv}, \eqref{eq:fdsf1}, \eqref{eq:fdsf2} and \eqref{eq:ufdsf3} admits a unique solution $(\vec U^{m+1}, P^{m+1}, P_\Gamma^{m+1}, P_{\rm sing}^{m+1}, F_\Gamma^{m+1}, \varkappa^{m+1}, \mathscr{V}^{m+1})$.
Moreover, assuming that the vertex normals in \eqref{eq:nuhomegah} satisfy
\begin{equation}
\vec\nu_p^m(\vec q)\neq 0\quad\forall \vec q\in Q_\Gamma^m.\label{eq:vnormalassump1}
\end{equation}
then the linear system \eqref{eqn:uXfull} admits a unique solution $(\vec X^{m+1}, \vec U_\Gamma^{m+1}, \lambda^{m+1}_\Gamma)$ with $\lambda^{m+1}_\Gamma=0$.
\end{thm}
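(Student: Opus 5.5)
The plan mirrors the proof of the corresponding theorem for the fitted ALE scheme in Section~\ref{sec:pro}. The first-stage system \eqref{eqn:ufdnv}, \eqref{eq:fdsf1}, \eqref{eq:fdsf2}, \eqref{eq:ufdsf3} is linear and square. It therefore suffices to show that the homogeneous system has only the trivial solution. That system is obtained by setting $\vec g=\vec 0$, dropping all terms involving $\vec U^m$, $\varkappa^m$ on the right-hand sides, and replacing $\varkappa^{m+1}-\bkap$ by the unknown $\varkappa$.

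\textbf{Energy test.} I would test with
\begin{equation*}
\vec\xi^h=\vec U,\quad q^h=P,\quad q^h_\Gamma=P_\Gamma,\quad \varphi^h=\mathscr{V},\quad \chi^h=\varkappa,\quad \zeta^h=F_\Gamma,
\end{equation*}
and add the resulting identities. The cancellations go as follows.
\begin{itemize}
\item The pressure terms vanish by \eqref{eq:ufdnv2}, \eqref{eq:ufdnv3}, and $P_{\rm sing}\ipd{\vec\nu_p^m,\vec U}^h_{\Gm}=0$ by the volume constraint.
\item The skew-symmetric terms $\mathscr{A}(\rho^m\vec I_2^m\vec U^m;\vec U,\vec U)$ and $\mathscr{A}_{\Gm}(\mathscr{\vv V}^m;\varkappa,\varkappa)$ vanish identically.
\item The coupling term $\alpha\ipd{F_\Gamma\vec\nu_p^m,\vec U}^h_{\Gm}$ equals $\alpha\ipd{F_\Gamma,\mathscr{V}}^h_{\Gm}$ by \eqref{eq:ufdsf3} with $\zeta^h=F_\Gamma$, since the normal is the same $\vec\nu_p^m$ in both places.
\item The term $\alpha\ipd{F_\Gamma,\mathscr{V}}^h_{\Gm}$ cancels against \eqref{eq:fdsf1} with $\varphi^h=\mathscr{V}$, after scaling by $\alpha$. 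The remaining terms $\ipd{\nabs\varkappa,\nabs\mathscr{V}}$, $\ipd{\mathcal W^m\varkappa,\mathscr{V}}$ and $\frac12\ipd{(\Pi^m_{m-1}\varkappa^m-\bkap)\Pi^m_{m-1}\varkappa^m\varkappa,\mathscr{V}}$ cancel exactly against the corresponding terms of \eqref{eq:fdsf2} with $\chi^h=\varkappa$.
\end{itemize}

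What remains is
\begin{align*}
&\frac{1}{2\ttau}\bigl[(\rho^m\vec U,\vec U)+(I_0^m\rho^{m-1}\vec U,\vec U)\bigr]+2(\mu^m\mat D(\vec U),\mat D(\vec U))\\
&\quad+\frac{\rho_\Gamma}{2\ttau}\bigl[\ipd{\vec U,\vec U}^h_{\Gm}+\ipd{\Pi^{m-1}_m\vec U,\Pi^{m-1}_m\vec U}^h_{\Gamma^{m-1}}\bigr]+2\mu_\Gamma\norm{\mat D_s^m(\vec\pi_1^m\vec U)}^2_{\Gm,0}\\
&\quad+\frac{\alpha}{2\ttau}\bigl[\ipd{\varkappa,\varkappa}_{\Gm}+\ipd{\Pi^{m-1}_m\varkappa,\Pi^{m-1}_m\varkappa}_{\Gamma^{m-1}}\bigr]=0 .
\end{align*}
Every term is nonnegative. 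Since $\mu^m>0$, $\mathscr H^{d-1}(\partial_1\Omega)>0$ and $\vec U\in\bU^m(\vec 0)$, Korn's inequality gives $\vec U=\vec 0$. Since $\alpha>0$, we also get $\varkappa=0$.

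\textbf{Remaining unknowns.} Inserting $\vec U=\vec 0$ into \eqref{eq:ufdsf3} with $\zeta^h=\mathscr{V}$ gives $\mathscr{V}=0$, because the lumped inner product is definite on $V^h(\Gm)$. Then \eqref{eq:fdsf1} with $\varphi^h=F_\Gamma$ gives $F_\Gamma=0$. The momentum equation now reduces to
\begin{equation*}
(P,\nabla\cdot\vec\xi^h)+\ipd{P_\Gamma,\nabs\cdot[\vec\pi_1^m\vec\xi^h]}_{\Gm}+P_{\rm sing}\ipd{\vec\nu_p^m,\vec\xi^h}^h_{\Gm}=0\qquad\forall\vec\xi^h\in\bU^m(\vec 0),
\end{equation*}
and the $LBB_\Gamma$ condition \eqref{eq:infsup} yields $P=0$, $P_\Gamma=0$, $P_{\rm sing}=0$.

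\textbf{Second stage.} Equation \eqref{eq:uXfull2} decouples: its left-hand side is the lumped mass matrix on $[V^h(\Gm)]^d$, which is diagonal with positive entries. Hence $\vec U_\Gamma^{m+1}$ is uniquely determined. With $\vec U_\Gamma^{m+1}$ fixed, \eqref{eq:uXfull1}, \eqref{eq:uXfull3} have exactly the structure of \eqref{eqn:Xfull}, so I would repeat that argument verbatim. For the homogeneous problem, test with $\eta^h=\lambda_\Gamma$ and $\vec\eta^h=\vec X$ to get $\ipd{\mat P_{\Gm}\vec X,\mat P_{\Gm}\vec X}^h_{\Gm}=0$. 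Test with $\eta^h=\pi^m(\vec X\cdot\vec\nu_p^m)$ to kill the normal part, which gives $\vec X=\vec 0$. Finally take $\vec\eta^h=\vec\nu_p^m$ and use \eqref{eq:vnormalassump1} to get $\lambda_\Gamma=0$.

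The claim $\lambda^{m+1}_\Gamma=0$ for the actual solution follows by choosing $\vec\eta^h=\vec\nu_p^m$ in \eqref{eq:uXfull3}. The key observation is that in the vertex-wise lumped product, $\mat P_{\Gm}$ is elementwise while $\vec\nu_p^m$ is vertex-based. One has to check that $\ipd{\mat P_{\Gm}\vec a,\vec\nu_p^m}^h$ matches on both sides; indeed the two $\mat P_{\Gm}$-terms on the left and right simply cancel in the difference whenever the system is consistent. This leaves $\ipd{\lambda_\Gamma|\vec\nu_p^m|^2,1}$-type nodal identities, so I expect this step, as in the fitted case, to be the only delicate point.
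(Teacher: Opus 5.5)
Your first stage is correct and is the paper's energy argument for the fitted scheme, properly transferred. In particular, you correctly use that \eqref{eq:ufdnv1} and \eqref{eq:ufdsf3} both involve $\vec\nu_p^m$, so the coupling terms cancel. Your uniqueness argument for \eqref{eqn:uXfull} also follows the paper, with two small repairs. First, $\ipd{\mat{P}_{\Gm}\vec X,\mat{P}_{\Gm}\vec X}^h_{\Gm}=0$ together with $\vec X\cdot\vec\nu_p^m=0$ at the vertices gives $\vec X=\vec 0$ only through a vertexwise argument; if all element normals at a vertex are parallel, you need \eqref{eq:vnormalassump1} there. Second, $\vec\eta^h=\vec\nu_p^m$ only shows that one weighted sum of the nodal values of $\lambda_\Gamma$ vanishes, so you should test with $\vec\eta^h=\vec\pi_1^m(\lambda_\Gamma\vec\nu_p^m)$ instead.

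The genuine gap is the last claim, $\lambda_\Gamma^{m+1}=0$ for the actual solution. The paper omits this proof, and in the fitted case it asserts the claim via the same one-line test. A single test function $\vec\eta^h=\vec\nu_p^m$ gives a single scalar identity. Moreover, the $\mat{P}_{\Gm}$-terms do not cancel: $\mat{P}_{\Gm}$ is built from the elementwise normal $\vec\nu^m$, so $\mat{P}_{\Gm}\vec\nu_p^m\neq\vec 0$.

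Work vertexwise instead. For $\vec q\in Q_\Gamma^m$ with nodal basis function $\phi_{\vec q}\in V^h(\Gm)$, set $m_{\vec q}=\ipd{\phi_{\vec q},1}^h_{\Gm}$, $\mat{Q}(\vec q)=m_{\vec q}^{-1}\frac1d\sum_{\sigma_j^m\ni\vec q}|\sigma_j^m|\,\mat{P}_{\Gm}|_{\sigma_j^m}$ and $\vec\delta=(\vec X^{m+1}-\vec\id)/\ttau$. Testing \eqref{eq:uXfull1} and \eqref{eq:ufdsf3} with $\phi_{\vec q}$, and \eqref{eq:uXfull3} with $\phi_{\vec q}\vec e$ for $\vec e\in\bR^d$, gives
\[
\vec\delta(\vec q)\cdot\vec\nu_p^m(\vec q)=\mathscr{V}^{m+1}(\vec q)=\vec U^{m+1}(\vec q)\cdot\vec\nu_p^m(\vec q),\qquad \mat{Q}(\vec q)\bigl[\vec\delta(\vec q)-\vec U_\Gamma^{m+1}(\vec q)\bigr]=-\lambda_\Gamma^{m+1}(\vec q)\,\vec\nu_p^m(\vec q).
\]
Wherever the element normals at $\vec q$ are not all parallel, $\mat{Q}(\vec q)$ is positive definite, and therefore
\[
\lambda_\Gamma^{m+1}(\vec q)=\frac{\bigl[\vec U_\Gamma^{m+1}(\vec q)-\vec U^{m+1}(\vec q)\bigr]\cdot\vec\nu_p^m(\vec q)}{\vec\nu_p^m(\vec q)\cdot\mat{Q}(\vec q)^{-1}\vec\nu_p^m(\vec q)}.
\]

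In the fitted scheme the corresponding numerator vanishes, because the right-hand side of \eqref{eq:Xfull2} contains $\vec U^{m+1}$ itself. There $(\vec\id+\ttau\vec\pi_1^m\vec U^{m+1},0)$ solves \eqref{eqn:Xfull}, and uniqueness gives $\lambda_\Gamma^{m+1}=0$; that is the argument to use in the fitted case.

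In \eqref{eq:uXfull2}, however, the right-hand side is not mass-lumped. Hence $\vec U_\Gamma^{m+1}(\vec q)=m_{\vec q}^{-1}\ipd{\vec U^{m+1},\phi_{\vec q}}_{\Gm}$ is a weighted average of $\vec U^{m+1}$ over the star of $\vec q$. In general it differs from $\vec U^{m+1}(\vec q)$ in the direction $\vec\nu_p^m(\vec q)$. For example, take $\Gm$ to be the square with vertices $(\pm1,\pm1)$ with outward normals, and $\vec U^{m+1}(\vec z)=(z_2,0)^T$ on $\Gm$; the formula gives $\lambda_\Gamma^{m+1}=-\frac16$ at $(1,1)$.

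So this step cannot be closed along your lines. The claim $\lambda_\Gamma^{m+1}=0$ would follow if the right-hand side of \eqref{eq:uXfull2} were $\ipd{\vec U^{m+1},\vec\eta^h}^h_{\Gm}$: then $\vec U_\Gamma^{m+1}=\vec\pi_1^m\vec U^{m+1}$ and the fitted argument applies verbatim. For the scheme as written, only existence and uniqueness of $(\vec X^{m+1},\vec U_\Gamma^{m+1},\lambda_\Gamma^{m+1})$ can be proved.
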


\begin{thm}[stability estimate] Let $(\vec U^{m+1}, P^{m+1}, P_\Gamma^{m+1}, P_{\rm sing}^{m+1}, F_\Gamma^{m+1}, \varkappa^{m+1}, \mathscr{V}^{m+1})$ be a solution to the linear system \eqref{eqn:ufdnv}, \eqref{eq:fdsf1}, \eqref{eq:fdsf2} and \eqref{eq:ufdsf3} for each $m\geq 0$. In the case $\vec g = \vec 0$, the discrete solution satisfies the following energy stability estimate:
\begin{align}\label{eq:unfdES}
&\bar{\mathcal{E}}(\rho^m, \vec U^{m+1}, \Gamma^m, \varkappa^{m+1})+ 2\ttau\norm{\sqrt{\mu^m}\mat{D}(\vec U^{m+1})}_0^2+ 2\ttau\,\mu_\Gamma\norm{\mat{D}_{s}^m(\vec\pi_1^m\vec U^{m+1})}_{\Gm,0}^2\nn\\
&\quad+ \frac{\rho_+\ttau}{2}\ipd{\vec U^m\cdot\vec n,~\vec U^m\cdot\vec U^{m+1}}_{\partial_2\Omega}\leq\bar{\mathcal{E}}(I_0^m\rho^{m-1}, \vec I_2^m\vec U^{m}, \Gamma^{m-1}, \varkappa^{m}),
\end{align}
where we define
\begin{equation*}
\bar{\mathcal{E}}(\rho, \vec u, \Gamma, \varkappa):= \frac{1}{2}(\rho\vec u,~\vec u) + \frac{1}{2}\ipd{\rho_\Gamma\vec u,~\vec u}_{\Gamma}^h + \frac{\alpha}{2}\ipd{\varkappa-\bkap,~\varkappa-\bkap}_{\Gamma}.
\end{equation*}
\end{thm}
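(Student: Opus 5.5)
We mimic the proof of the fitted stability estimate. We test \eqref{eq:ufdnv1} with $\vec\xi^h=\ttau\,\vec U^{m+1}$, \eqref{eq:ufdnv2} with $q^h=P^{m+1}$ and use the constraint $\ipd{\vec U^{m+1},\vec\nu_p^m}_{\Gm}^h=0$, and \eqref{eq:ufdnv3} with $q_\Gamma^h=P_\Gamma^{m+1}$. We also test \eqref{eq:fdsf1} with $\varphi^h=\mathscr{V}^{m+1}$, \eqref{eq:fdsf2} with $\chi^h=\ttau(\varkappa^{m+1}-\bkap)$ and \eqref{eq:ufdsf3} with $\zeta^h=F_\Gamma^{m+1}$. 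The three pressure terms and the $P_{\rm sing}$ term drop out. The convective term $\mathscr{A}(\rho^m\vec I_2^m\vec U^m;\vec U^{m+1},\vec U^{m+1})$ vanishes by skew-symmetry, and so does $\mathscr{A}_{\Gm}(\mathscr{\vv V}^m;\varkappa^{m+1}-\bkap,\varkappa^{m+1}-\bkap)$.

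The curvature coupling works as in the fitted case. The terms $\ipd{\nabs\varkappa^{m+1},\nabs\mathscr{V}^{m+1}}_{\Gm}$, $\ipd{\mathcal{W}^m(\varkappa^{m+1}-\bkap),\mathscr{V}^{m+1}}_{\Gm}$ and the cubic-coefficient term cancel between \eqref{eq:fdsf1} and \eqref{eq:fdsf2}, since these were designed symmetrically. What remains is $\ttau\ipd{F_\Gamma^{m+1},\mathscr{V}^{m+1}}_{\Gm}^h$. By \eqref{eq:ufdsf3} this equals $\ttau\ipd{\vec U^{m+1}\cdot\vec\nu_p^m,F_\Gamma^{m+1}}_{\Gm}^h$, which cancels exactly with $\alpha\ttau\ipd{F_\Gamma^{m+1}\vec\nu_p^m,\vec U^{m+1}}_{\Gm}^h$ from \eqref{eq:ufdnv1}. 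This is why the unfitted scheme uses $\vec\nu_p^m$ and mass lumping in both places. Both expressions are mass-lumped vertex sums of $F_\Gamma^{m+1}\,\vec\nu_p^m\cdot\vec U^{m+1}$, so they agree even though $\vec U^{m+1}$ is not in $V^h(\Gm)$: only vertex values enter.

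After these cancellations we obtain the analogue of \eqref{eq:stab1}. The left-hand side contains
\[\tfrac12\bigl[(\rho^m\vec U^{m+1},\vec U^{m+1})+(I_0^m\rho^{m-1}\vec U^{m+1},\vec U^{m+1})\bigr],\]
the mass-lumped surface terms $\tfrac{\rho_\Gamma}{2}[\,\cdot\,]^h$ on $\Gm$ and $\Gamma^{m-1}$, the two dissipation terms, and the two curvature $L^2$ terms on $\Gm$ and $\Gamma^{m-1}$. The right-hand side contains $(I_0^m\rho^{m-1}\vec I_2^m\vec U^m,\vec U^{m+1})$, $\rho_\Gamma\ipd{\vec I_2^m\vec U^m,\Pi_m^{m-1}\vec U^{m+1}}^h_{\Gamma^{m-1}}$, the curvature cross term and the boundary term. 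We then apply $\vec a\cdot\vec b-\frac12|\vec b|^2\le\frac12|\vec a|^2$ pointwise with the weights. For the bulk term we use the nonnegative weight $I_0^m\rho^{m-1}$. For the surface term we use the nonnegative mass-lumped quadrature on $\Gamma^{m-1}$, which is a weighted sum of vertex values, so Cauchy--Schwarz applies there. For the curvature term we use the exact integral on $\Gamma^{m-1}$. This yields \eqref{eq:unfdES}, with the right-hand energy evaluated at $(I_0^m\rho^{m-1},\vec I_2^m\vec U^m,\Gamma^{m-1},\varkappa^m)$.

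The main point to check is consistency of the surface kinetic terms. Every occurrence must be mass-lumped, so that pairing $\ipd{\vec I_2^m\vec U^m,\Pi_m^{m-1}\vec U^{m+1}}^h_{\Gamma^{m-1}}$ against $\ipd{\Pi_m^{m-1}\vec U^{m+1},\Pi_m^{m-1}\vec U^{m+1}}^h_{\Gamma^{m-1}}$ gives exactly $\frac12\ipd{\vec I_2^m\vec U^m,\vec I_2^m\vec U^m}^h_{\Gamma^{m-1}}$, the term appearing in $\bar{\mathcal{E}}$. Positivity of the quadrature weights $|\sigma_j^{m-1}|/d$ ensures this. We also need $I_0^m\rho^{m-1}\ge0$ elementwise, which holds since $I_0^m$ is an averaging projection of a nonnegative function.
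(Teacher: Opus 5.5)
Your proposal is correct and takes the same approach as the paper. The paper omits the unfitted proof and points to the fitted argument, which you reproduce: test with $\ttau\,\vec U^{m+1}$, $P^{m+1}$, $P_\Gamma^{m+1}$, $\mathscr{V}^{m+1}$, $\ttau(\varkappa^{m+1}-\bkap)$ and $F_\Gamma^{m+1}$, use skew-symmetry and the matching mass-lumped $\vec\nu_p^m$ coupling, and then apply $\vec a\cdot\vec b-\frac12|\vec b|^2\le\frac12|\vec a|^2$ line by line. The one bookkeeping slip is that the curvature equations \eqref{eq:fdsf1}--\eqref{eq:fdsf2} must first be multiplied by $\alpha$, so that the $F_\Gamma^{m+1}$ terms cancel exactly and the factor $\frac{\alpha}{2}$ appears in $\bar{\mathcal{E}}$.
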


\subsection{Numerical results}

For the bulk mesh strategy, we refer the reader to \cite{BGN15stable} with the notation ``$n{\rm adapt}_{k,l}$'' to denote $\ttau = 10^{-3}/n,  N_f = 2^k$ and $N_c= 2^l$. Unless otherwise stated, we always use the discretization parameters $5{\rm adapt}_{9,4}$.

\noindent
{\bf Example 4}: In this example, we repeat the simulations from Example~2 in the fitted approach and consider the evolution of a smooth letter ``C'' in $\Omega=(0,2)^2$. The numerical results are visualized in Figs.~\ref{fig:uc0} and~\ref{fig:uc1} for $\rho_\Gamma=0$ and $\rho_\Gamma=1$, respectively. Here, we observe vesicle behaviours similar to those in the ALE results. We also observe good preservation of volume and area, as well as the decay of the discrete energy. 

\begin{figure}[!htp]
\centering
\includegraphics[width=0.8\textwidth]{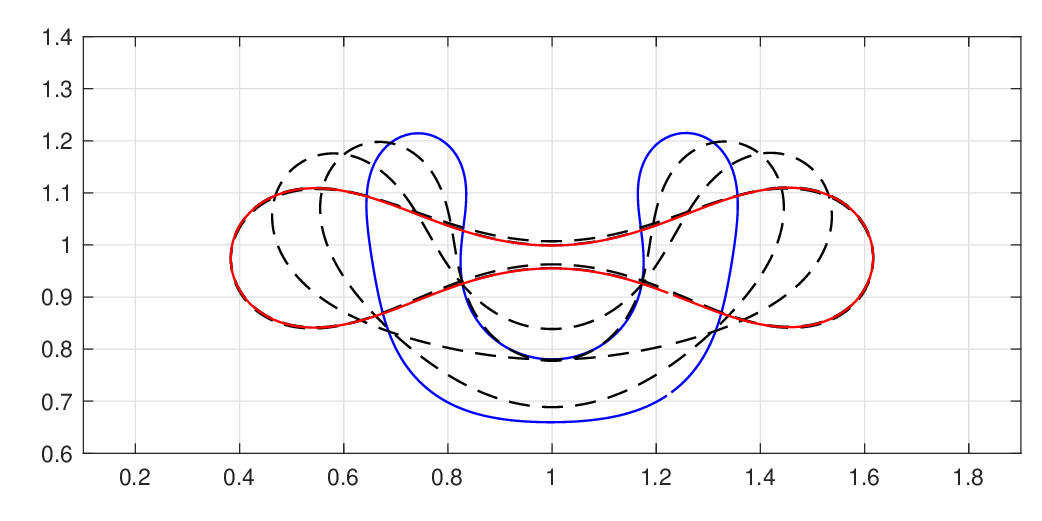}
\includegraphics[width=0.95\textwidth]{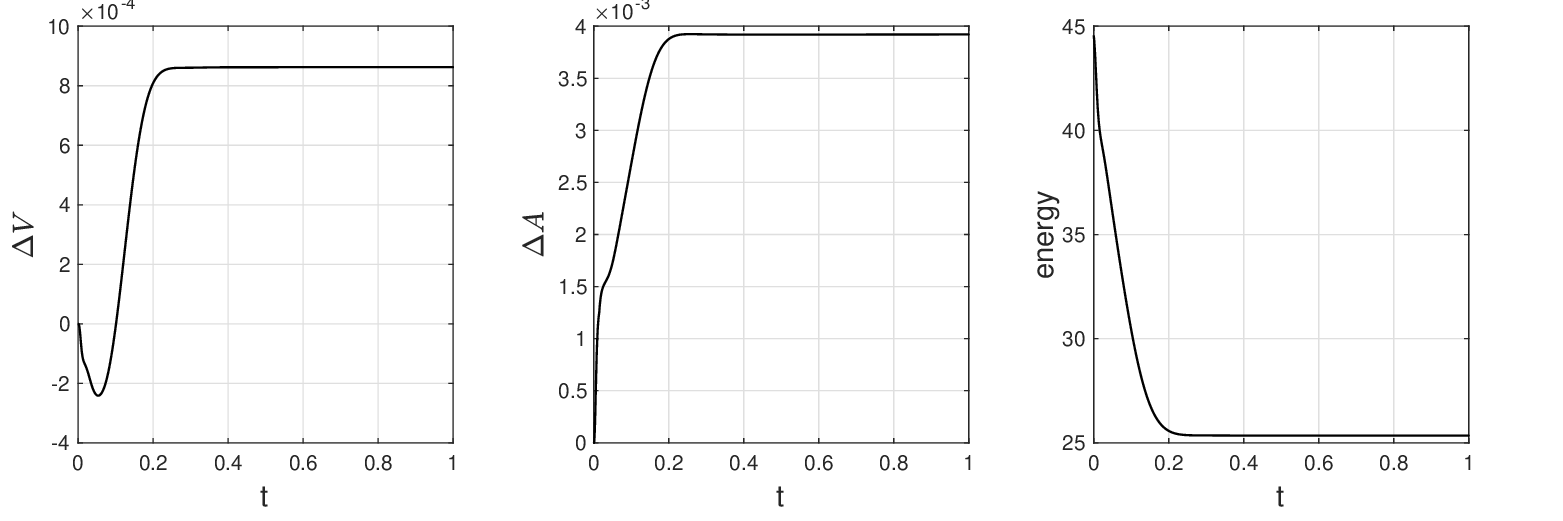}
\caption{Evolution for a smooth letter ``C'' in the case of $\rho_\Gamma=0$ towards the steady state (red solid line), where the plots of $\Gamma^m$ are shown at $t=0, 0.05, 0.3, 0.6, 1.0$. Below are plots of the discrete quantities.}
\label{fig:uc0}
\end{figure}

\begin{figure}[!htp]
\centering
\includegraphics[width=0.8\textwidth]{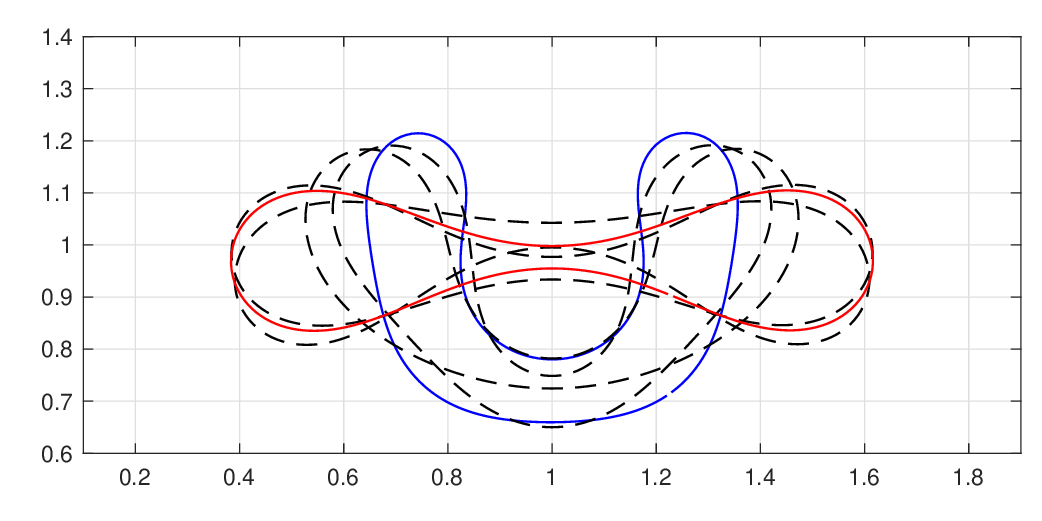}
\includegraphics[width=0.95\textwidth]{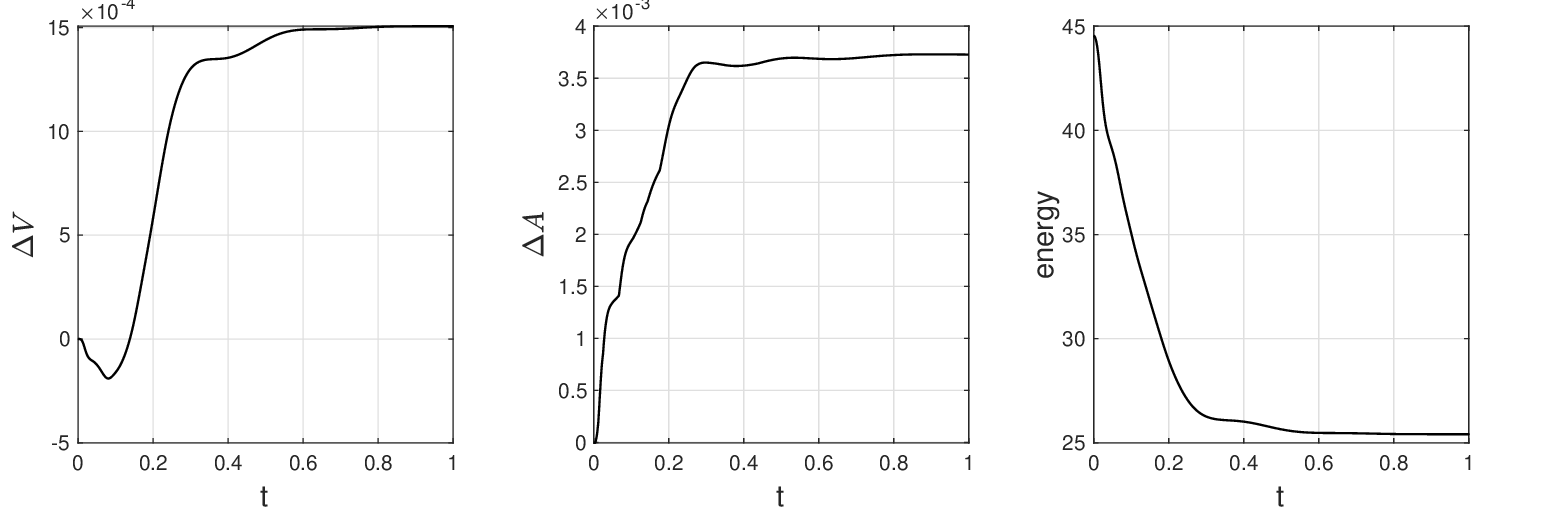}
\caption{Evolution for a smooth letter ``C'' in the case of $\rho_\Gamma=1$ towards the steady state (red solid line), where the plots of $\Gamma^m$ are shown at $t=0, 0.05, 0.3, 0.6, 1.0$. Below are plots of the discrete quantities.}
\label{fig:uc1}
\end{figure}

\noindent
{\bf Example 5}: In our final example, we conduct an experiment for the evolution of a vesicle flowing through a constriction; see \cite[Fig.~4]{BGN16sfluidic}. We consider
$\Omega=(-2,2)\times(-1,1)\setminus ([-1,1]\times[0.5,1])$
with $\partial_2\Omega=\{2\}\times(-1,1)$. On the left boundary $\{-2\}\times(-1,1)$, we prescribe the inhomogeneous boundary condition $\vec g(\vec z)=(1-z_2^2, 0)^T$ to model Poiseuille flow. Initially, the vesicle is given as an elongated tube of total dimension $0.2\times 1.5$. We choose the parameters
\begin{equation*}
\rho_\pm = 0,\quad\mu_\pm=1,\quad \rho_\Gamma=15,\quad \mu_\Gamma=1, \quad \alpha=0.1.
\end{equation*}
Owing to the large-scale displacement of the vesicle, our fitted mesh method becomes less effective for this particular example, whereas the unfitted mesh approach possesses clear advantages. The results are reported in Fig.~\ref{fig:ucon}. In this example, we reset the evolving curvature $\varkappa^m$ to the ``BGN'' curvature $\kappa^m$ defined on $\Gamma^m$ analogously to \eqref{eqn:ic} every $N=1000$ time steps, so for a total of twelve times, to reduce the accumulation of errors in the discrete curvature approximations.

\begin{figure}
\centering
\includegraphics[width=0.9\textwidth]{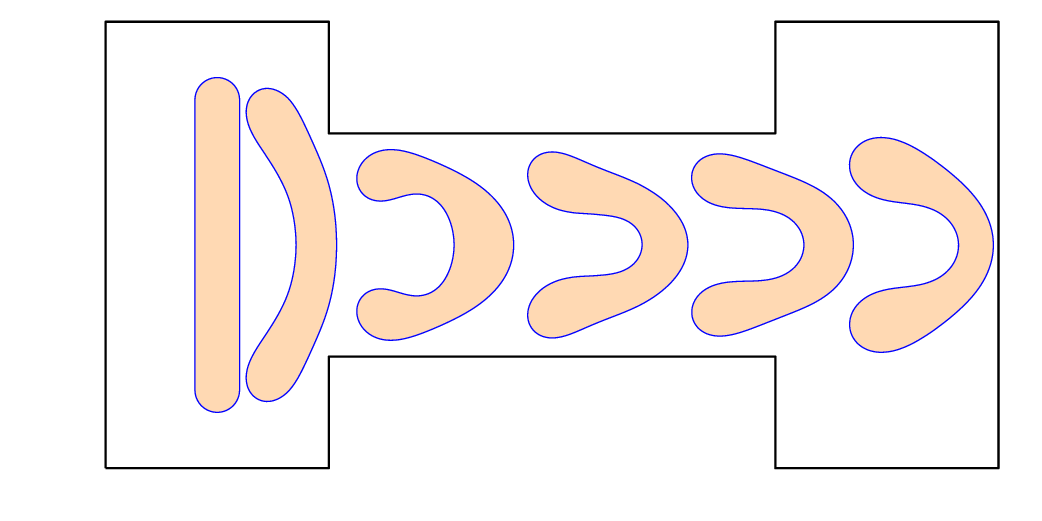}
\includegraphics[width=0.9\textwidth]{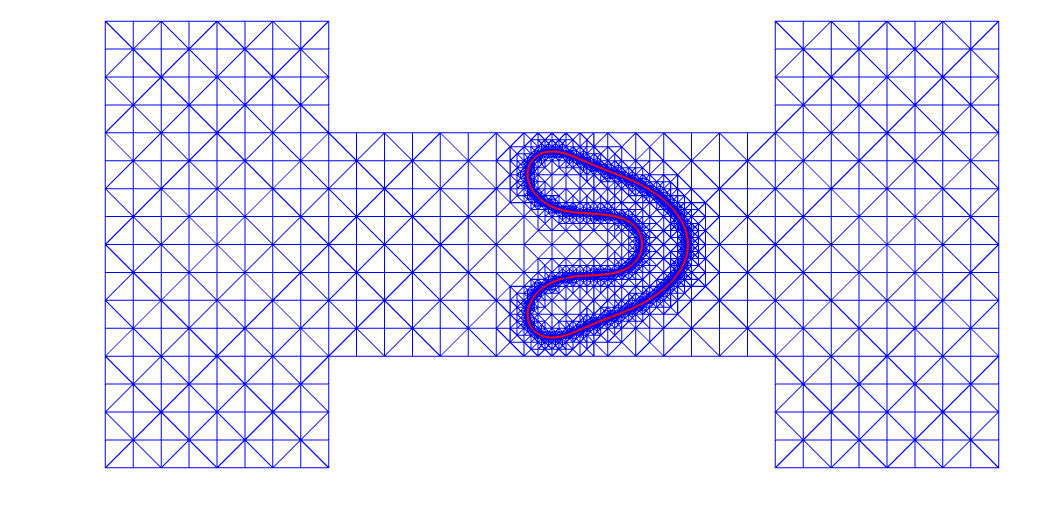}
\includegraphics[width=0.9\textwidth]{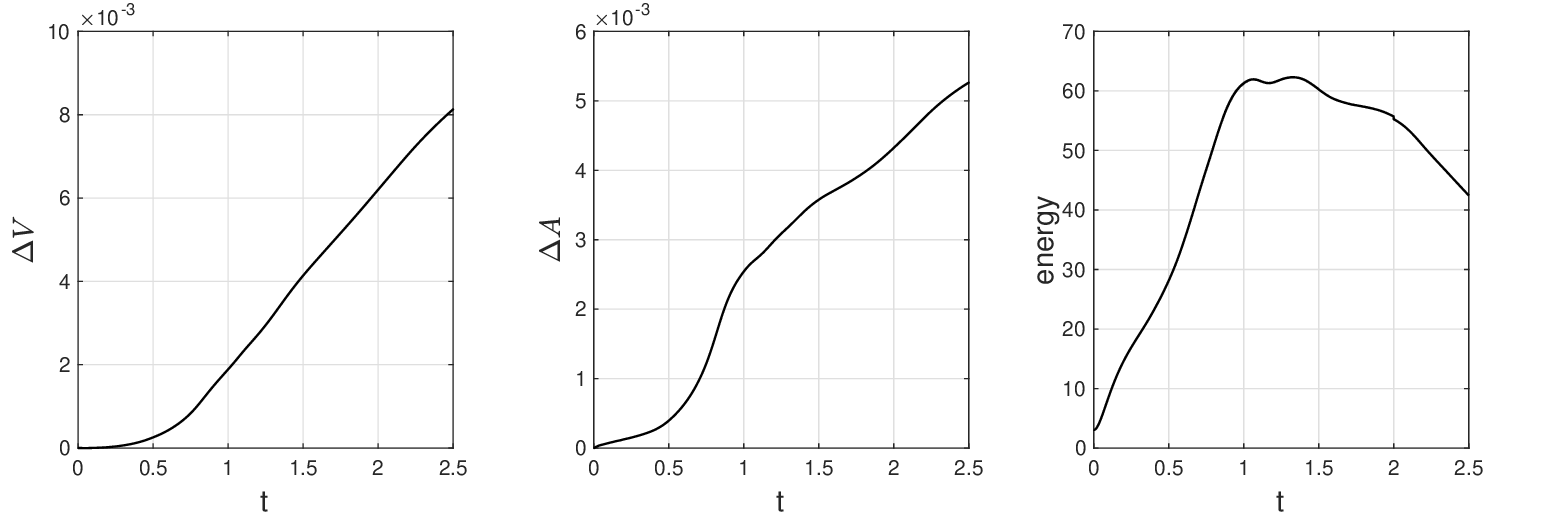}
\caption{Simulation of a vesicle flowing through a constriction. Upper panel: plots of the interface $\Gamma^m$ at $t=0, 0.5, 1.0, 1.5, 2.0, 2.5$. Middle panel: plots of the computational meshes at $t=1.5$. Lower panel: the time history of the discrete quantities.}
\label{fig:ucon}
\end{figure}

\section{Conclusions}\label{sec:con}

The present work introduced a unified, energy-stable finite element framework for the dynamics of fluidic biomembranes that consistently couples membrane bending, interfacial surface fluidity, and bulk fluid interactions while preserving a discrete energy dissipation law. Central to the approach is a unified weak formulation of the governing dynamical system that permits independent choices of the bulk mesh velocity and the tangential velocity of the interface mesh. Based on this formulation, we proposed fully stable finite element discretizations in both fitted and unfitted mesh settings. Numerical experiments are presented to demonstrate the robustness, favourable properties, and effectiveness of the proposed methods in simulating complex biomembrane phenomena.

\section*{Acknowledgments}
This work was partially supported by the National Natural
Science Foundation of China No. 12401572 (Q.Z) and the Key Project of the National Natural Science Foundation of China No.~12494555 (Q.Z).

\begin{appendices}
\section{Differential calculus}
For simplicity, we use $\Omega_\pm$ to denote $\Omega_\pm(t)$. It follows from the Reynolds transport theorem, the divergence free condition \eqref{eq:div} and \eqref{eq:dTd} that
\begin{align}
\ddt\bigl(\vec u,~\vec\chi\bigr)_{\Omega_-}&=\bigl(\partial_t^\bullet\vec u,~\vec\chi\bigr)_{\Omega_-}+ \bigl(\partial_t^\bullet\vec\chi,~\vec u\bigr)_{\Omega_-} + \bigl(\vec u\cdot\vec\chi,~\nabla\cdot\vec u\bigr)_{\Omega_-}\nn\\
&= \bigl(\Dtw\vec u,~\vec\chi\bigr)_{\Omega_-}+ \bigl(\Dtw\vec\chi,~\vec u\bigr)_{\Omega_-}+ \bigl([\vec u-\vec w],~\nabla[\vec u\cdot\vec\chi]\bigr)_{\Omega_-}.\label{eq:aleneg}
\end{align}
Using again the Reynolds transport theorem and recalling \eqref{eq:ALET}, we have
\begin{align}
&\ddt\bigl(\vec u,~\vec\chi\bigr)_{\Omega_+} = \bigl(\partial_t\vec u,~\vec\chi\bigr)_{\Omega_+}+\bigl(\vec  u,~\partial_t\vec\chi\bigr)_{\Omega_+}- \ipd{\vec u\cdot\vec\chi,~\vec u\cdot\vec\nu}_{\Gt}\nn\\
&=\bigl(\Dtw\vec u,~\vec\chi\bigr)_{\Omega_+} + \bigl(\vec u,~\Dtw\vec\chi\bigr)_{\Omega_+}-\bigl(\vec w,~\nabla[\vec u\cdot\vec\chi]\bigr)_{\Omega_+}-\ipd{\vec u\cdot\vec\chi,~\vec u\cdot\vec\nu}_{\Gt}\nn\\
&=\bigl(\Dtw\vec u,~\vec\chi\bigr)_{\Omega_+} + \bigl(\vec u,~\Dtw\vec\chi\bigr)_{\Omega_+} + \bigl([\vec u-\vec w],~\nabla[\vec u\cdot\vec\chi]\bigr)_{\Omega_+}\nn\\
&\qquad\qquad\qquad -\bigl(\vec u,~\nabla[\vec u\cdot\vec\chi]\bigr)_{\Omega_+}-\ipd{\vec u\cdot\vec\chi,~\vec u\cdot\vec\nu}_{\Gt}\nn\\
&=\bigl(\Dtw\vec u,~\vec\chi\bigr)_{\Omega_+} + \bigl(\vec u,~\Dtw\vec\chi\bigr)_{\Omega_+} + \bigl([\vec u-\vec w],~\nabla[\vec u\cdot\vec\chi]\bigr)_{\Omega_+}-\ipd{\vec u\cdot\vec n_{_{\partial\Omega}},~\vec u\cdot\vec\chi}_{\partial_2\Omega},\label{eq:alepos}
\end{align}
where the last equality results from integration by parts. Now multiplying \eqref{eq:aleneg}, \eqref{eq:alepos} with $\rho_-$ and $\rho_+$, respectively, and then combining the two equations leads to
\begin{align}
&\ddt\bigl(\rho\,\vec u,~\vec\chi\bigr)=\sum_{\pm}\rho_\pm \ddt\bigl(\vec u,~\vec\chi\bigr)_{\Omega_\pm}\nn\\
&=\bigl(\rho\,\Dtw\vec u,~\vec\chi\bigr) +\bigl(\rho\,\vec u,~\Dtw\vec\chi\bigr)+\bigl(\rho\,[\vec u-\vec w],~\nabla[\vec u\cdot\vec\chi]\bigr) - \rho_+\ipd{\vec u\cdot\vec n_{_{\partial\Omega}},~\vec u\cdot\vec\chi}_{\partial_2\Omega},\nn
\end{align}
which gives
\begin{align}
\bigl(\rho\,\Dtw\vec u,~\vec\chi\bigr) =& \ddt\bigl(\rho\,\vec u,~\vec\chi\bigr) - \bigl(\rho\,\vec u,~\Dtw\vec\chi\bigr)+ \rho_+\ipd{\vec u\cdot\vec n_{_{\partial\Omega}},~\vec u\cdot\vec\chi}_{\partial_2\Omega}\nn\\
&-\bigl(\rho\,[\vec u-\vec w]\cdot\nabla\vec u,~\vec\chi\bigr)-\bigl(\rho\,[\vec u-\vec w]\cdot\nabla\vec\chi,~\vec u\bigr).
\label{eq:ale1}
\end{align}

It follows from the Reynolds transport theorem, the surface divergence free condition \eqref{eq:if3}, and \eqref{eq:fdTd} that
\begin{align}
\ddt\ipd{\vec u,~\vec\chi}_{\Gt}&=\ipd{\partial_t^\bullet\vec u,~\vec\chi}_{\Gt} + \ipd{\vec u,~\partial_t^\bullet\vec\chi}_{\Gt}\nn\\
&=\ipd{\Dtv\vec u,~\vec\chi}_{\Gt} + \ipd{\vec u,~\Dtv\vec\chi}_{\Gt} + \ipd{\vec u-\vec v,~\nabs[\vec u\cdot\vec\chi]}_{\Gt},\nn
\end{align}
which leads to
\begin{align}
\ipd{\Dtv\vec u,~\vec\chi}_{\Gt} =& \ddt\ipd{\vec u,~\vec\chi}_{\Gt}-\ipd{\vec u,~\Dtv\vec\chi}_{\Gt}\nn\\
&\quad - \ipd{[\vec u-\vec v]\cdot\nabs\vec u,~\vec\chi}_{\Gt}- \ipd{[\vec u-\vec v]\cdot\nabs\vec\chi,~\vec u}_{\Gt}.\label{eq:fale1}
\end{align}
\end{appendices}

\footnotesize
\bibliographystyle{model1b-num-names}
\bibliography{bib}


\end{document}